\titleformat{\subsection}[hang]{\normalfont\bfseries}{\thesubsection}{1em}{}
\titleformat{\section}[hang]{\normalfont\large\bfseries}{\thesection}{1em}{}
\titlespacing\section{0pt}{3.5ex plus 0.5ex minus .2ex}{0.3ex plus .2ex}
\titlespacing\subsection{0pt}{2.5ex plus 0.5ex minus .2ex}{0.3ex plus .2ex}
\titlespacing\subsubsection{0pt}{2.5ex plus 0.5ex minus .2ex}{0.3ex plus .2ex}
\numberwithin{equation}{subsection}
\newtheorem{Thm}{Theorem}[subsection]
\newtheorem{Lemma}[Thm]{Lemma}
\newtheorem{Prop}[Thm]{Proposition}
\newtheorem{Fact}[Thm]{Fact}
\theoremstyle{definition}
\newtheorem{Def}[Thm]{Definition}
\newtheorem{Rem}[Thm]{Remark}
\newtheorem{Aside}[Thm]{Aside}
\newtheorem{Not}[Thm]{Notation}
\newcommand{\<}{\left\langle}
\renewcommand{\>}{\right\rangle}
\newcommand{\ceil}[1]{\left\lceil #1 \right\rceil}  
\newcommand{\bC}{\mathbb{C}}
\newcommand{\bF}{\mathbb{F}}
\newcommand{\bG}{\mathbb{G}}
\newcommand{\bQ}{\mathbb{Q}}
\newcommand{\bR}{\mathbb{R}}
\newcommand{\bZ}{\mathbb{Z}}
\newcommand{\cO}{\mathcal{O}}
\newcommand{\fg}{\mathfrak{g}}
\newcommand{\fr}{\mathfrak{r}}
\newcommand{\fs}{\mathfrak{s}}
\newcommand{\ft}{\mathfrak{t}}
\newcommand{\sA}{\mathscr{A}}
\newcommand{\sB}{\mathscr{B}}
\newcommand{\sF}{\mathscr{F}}
\DeclareMathAlphabet{\mathpzc}{OT1}{pzc}{m}{it}
\newcommand{\ra}{\rightarrow}
\newcommand{\wt}{\widetilde}
\newcommand{\eps}{\epsilon}
\providecommand{\abs}[1]{\left\lvert#1\right\rvert}
\DeclareMathOperator{\Image}{Im}		
\DeclareMathOperator{\Hom}{Hom}			
\DeclareMathOperator{\Id}{Id}		  	
\DeclareMathOperator{\GL}{GL}		  	
\DeclareMathOperator{\GSp}{GSp}		  	%
\DeclareMathOperator{\U}{U}		  	
\DeclareMathOperator{\SL}{SL}		  	
\DeclareMathOperator{\Gal}{Gal}			
\DeclareMathOperator{\Ind}{Ind}			
\DeclareMathOperator{\Sp}{Sp}		  	
\DeclareMathOperator{\Ad}{Ad}	  		
\DeclareMathOperator{\Lie}{Lie}  	  
\DeclareMathOperator{\cind}{c-ind}  	  
\DeclareMathOperator{\val}{val}  	  
\DeclareMathOperator{\Cent}{Cent}  	  
\DeclareMathOperator{\der}{der}		
\begin{document}
\author{Jessica Fintzen}
\title{Supercuspidal representations: construction, classification, and characters}
\date{}
\maketitle
\begin{abstract}
The building blocks for irreducible smooth representations of $p$-adic groups are the supercuspidal representations. In these notes that are an expansion of a lecture series given during the IHES summer school 2022 we will explore an explicit exhaustive construction of these supercuspidal representations and their character formulas and observe a striking parallel between a large class of these representations and discrete series representations of real algebraic Lie groups. A key ingredient for the construction of supercuspidal representations is the Bruhat--Tits theory and Moy--Prasad filtration, which we will recall in this survey.
\end{abstract}

{
	\renewcommand{\thefootnote}{}  
	\footnotetext{The author was partially supported by NSF Grants DMS-2055230 and DMS-2044643, a Royal Society University Research Fellowship, a Sloan Research Fellowship and the European Research Council (ERC) under the European Union's Horizon 2020 research and innovation programme (grant agreement n° 950326).}
}

\tableofcontents


\vspace*{1cm}

\textbf{Disclaimer.} There is some overlap between these notes and the ones written by the same author for the Current Developments in Mathematics 2021 conference that took place in March 2022 (\cite{Fi-CDM}). More precisely, Section \ref{Section-MP-BT} below is an expanded version of Section 3 of \cite{Fi-CDM} and some subsections of Sections \ref{Section-construction} and \ref{Section-classification} are based on subsections of Sections 4 and 5 of \cite{Fi-CDM}.
\\

\textbf{Acknowledgments.} 
I thank Tasho Kaletha for feedback on a previous draft and Loren Spice for discussions on Harish-Chandra characters including supplying me with references for the results about characters in positive characteristic. I also thank both of them for discussions related to the topics of this survey.  This article is an expansion of a lecture series given during the IHES summer school 2022 and I thank the organizers of the summer school for inviting me to the school and to contribute an article to the proceedings of the summer school. While writing the final parts of these notes I also benefited from the hospitality of the Max Planck Institute for Mathematics in Bonn. I am grateful to the referee for a careful reading of the manuscript.



\section{Motivation and connection between representations of real and $p$-adic reductive groups}\label{Sect-motivation-and-real}
According to Harish-Chandra's Lefschetz principle, the representation theory of real reductive groups and $p$-adic reductive groups should show striking parallels. 

We therefore start by recalling a few results about representations of real reductive groups and use this to motivate the results we expect for $p$-adic reductive groups.

Throughout the survey we let $F$\index{notation}{F@$F$} be a local field and $G$\index{notation}{G@$G$} a connected reductive group over $F$. 
We denote by lowercase fraktur letters the Lie algebras of reductive groups, e.g., $\fg$ denotes the Lie algebra of $G$.\index{notation}{g@$\fg$}
Starting from Section \ref{Section-MP-BT} we assume that $F$ is non-archimedean.

\subsection{Representations of real algebraic Lie groups}
In this subsection we consider the case of $F=\bR$, i.e.,we consider a connected reductive group $G$ over $\bR$ and study its complex representations. More precisely, we consider homomorphisms from $G(\bR)$ into the group of invertible bounded operators on a separable Hilbert space $V$ such that the resulting map $G(\bR) \times V \ra V$ is continuous.  
By the Langlands classification (\cite{Langlands89}, compare \cite[\S~3.3]{Taibi-IHES}), every such irreducible representation is equivalent to one that is the unique quotient of a parabolic induction from a tempered unitary representation twisted by a character of the center of a Levi subgroup. The tempered unitary representations themselves all occur as irreducible subquotients of essentially square-integrable representations.  

\begin{Rem} Let $K$ be a maximal compact subgroup of $G(\bR)$ (which is unique up to conjugation) and let $\fg_\bC$ be the complexification of the (real) Lie algebra of $G(\bR)$. 
	Irreducible unitary representations are unitarily equivalent if and only if their (irreducible unitary) $(\fg_\bC, K)$-modules are isomorphic, and every irreducible unitary $(\fg_\bC, K)$-module is the $(\fg_\bC, K)$-module of an irreducible unitary representation of $G(\bR)$ (\cite{HC-reps1
		, HC-reps2, HC-Plancherel}, see also \cite[\S2]{Wallach-Corvallis}). Thus parametrizing equivalence classes of irreducible unitary representations is equivalent to parametrizing isomorphism classes of irreducible unitary $(\fg_\bC, K)$-module, which is the notion used in \cite{Taibi-IHES}.
\end{Rem} 

In 1965/66 Harish-Chandra (\cite{HC-DS1, HC-DS2}) provided a classification of essentially square-integrable representations (initially for semisimple groups). In order to state the result, we introduce some notation following \cite[\S~4.11]{Kaletha-regular}. Suppose that $G$ has an elliptic maximal torus $S$ and let $\theta$ be a character of $S(\bR)$. We write $\Phi(G,S)=\Phi(G_\bC,S_\bC)$ for the (absolute) root system of $G$ with respect to $S$ and for $\alpha \in \Phi(G,S)$, we write $\check \alpha \in \Hom(\bG_m, S_\bC)$ for the corresponding coroot. We denote by $G_{\der}$ the derived group of $G$, by $S_{\der}$ the intersection of $S$ with $G_{\der}$ and by $\theta_{\der}$ the restriction of $\theta$ to $S_{\der}(\bR)$. Then $S_{\der}$ is anisotropic, and hence $\theta_{\der}$ is the restriction of an algebraic character of $(S_{\der})_\bC$, and we can identify $\theta_{\der}$ with an element of $\Hom((S_{\der})_\bC, \bG_m)$. Using the usual  $\bZ$-valued pairing $\langle \cdot, \cdot \rangle$  between the $\bZ$-modules $\Hom(\bG_m, (S_{\der})_\bC)$ and $\Hom((S_{\der})_\bC, \bG_m)$ and the observation that coroots factor through $S_{\der}$, we define $\langle \theta, \check \alpha \rangle$ to be $\langle \theta_{\der}, \check \alpha \rangle$ for $\alpha \in \Phi(G,S)$.

This allows us to state Harish-Chandra's classification, extended to general reductive groups, following \cite[\S~4.11]{Kaletha-regular}, which is based on \cite{Langlands89}.

\begin{Thm}
	The equivalence classes of essentially square-integrable representations of $G(\bR)$ are parameterized by triples $(S, \theta, \Phi^+)$ up to conjugacy, where 
	\begin{enumerate}[(i)]
		\item $S$ is an elliptic maximal torus of $G$
		\item $\theta: S(\bR) \ra \bC^\times$ is a character
		\item $\Phi^+$ is a choice of positive roots for $\Phi(G,S)$ such that $\langle \theta, \check \alpha \rangle \geq 0$ for all $\alpha \in \Phi^+$. 
	\end{enumerate}
Moreover, the representations in the equivalence class attached to the conjugacy class of $(S, \theta, \Phi^+)$ are determined by their character $\Theta_{(S, \theta, \Phi^+)}$ satisfying
\begin{equation}
\label{eqn:char:real}
\Theta_{(S, \theta, \Phi^+)}(\gamma)=(-1)^{\frac{1}{2}\dim(G(\bR)/K)}\sum_{w \in N_G(S)(\bR)/S(\bR)}\frac{\theta(w\gamma w^{-1})}{\prod_{\alpha \in \Phi^+}(1-\alpha(w\gamma w^{-1})^{-1})}
\end{equation}
for every regular element $\gamma$ of $S(\bR)$, where $N_G(S)$ denotes the normalizer of $S$ in $G$.

\end{Thm}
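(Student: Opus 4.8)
The plan is to follow Harish-Chandra's analytic construction and classification of the discrete series, reorganized in the form due to Langlands; the character formula \eqref{eqn:char:real} should be thought of as the Weyl character formula for the compact torus $S(\bR)$ (the product $\prod_{\alpha\in\Phi^+}(1-\alpha(\cdot)^{-1})$ is the Weyl denominator), which both motivates the ansatz and, once the construction is in place, pins down the representation. The first step is a reduction to the semisimple case: one disposes of the central torus $Z$ (the identity component of the center of $G$) by noting that an essentially square-integrable representation of $G(\bR)$ is a twist by a character of $Z(\bR)$ of a genuinely square-integrable representation, with the parametrizing data $(S,\theta,\Phi^+)$ splitting accordingly through $S_{\der}$; moreover one recalls Harish-Chandra's necessary condition that $G(\bR)$ has square-integrable representations only if it admits a maximal torus that is compact modulo $Z(\bR)$, which forces $S$ in (i) to be elliptic. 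So one may assume $G$ semisimple with a compact maximal torus $S$.

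For the construction, fix $(S,\theta,\Phi^+)$ as in (i)--(iii). Harish-Chandra's theory of invariant eigendistributions attaches to the infinitesimal character determined by $\theta$ a finite-dimensional space of tempered invariant eigendistributions on $G(\bR)$ that are concentrated, in the appropriate sense, on the elliptic set, and one singles out $\Theta_{(S,\theta,\Phi^+)}$ by requiring that its restriction to the regular set of $S(\bR)$ equal the right-hand side of \eqref{eqn:char:real}. The crux of the theorem — and the main obstacle — is then to prove that $(-1)^{\frac{1}{2}\dim(G(\bR)/K)}\Theta_{(S,\theta,\Phi^+)}$ is the character of an irreducible square-integrable representation: one must establish realizability (that the candidate distribution is a genuine unitary character), irreducibility, and square-integrability, and locate the representation inside the discrete part of $L^2(G(\bR))$ using the Plancherel formula and the growth behaviour of matrix coefficients. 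Alternatively the representation can be constructed geometrically, as the space of $L^2$-solutions of a Dirac-type operator on $G(\bR)/K$ twisted by the homogeneous line bundle attached to $\theta$ (Atiyah--Schmid), with \eqref{eqn:char:real} emerging from a fixed-point computation of its character; this sidesteps the analytic realizability problem but still requires the Plancherel input for exhaustion.

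Exhaustiveness follows from the Plancherel decomposition: the discrete part of $L^2(G(\bR))$ is exactly spanned by the representations just produced, because by Harish-Chandra's regularity theorem together with temperedness every square-integrable $\pi$ has a character that is an invariant eigendistribution concentrated on the elliptic set, hence of the form \eqref{eqn:char:real} for some elliptic parameter. Finally, that the parametrization is a bijection up to conjugacy rests on three facts: a square-integrable representation is determined by its character; that character is determined by its restriction to the regular set of $S(\bR)$ for $S$ elliptic (the remaining Cartan subgroups contribute nothing on the elliptic set); and from the explicit shape of \eqref{eqn:char:real} one recovers $(S,\theta)$ up to the action of $N_G(S)(\bR)/S(\bR)$ and of $G(\bR)$-conjugacy, while conjugate triples visibly yield the same distribution. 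The delicate point in this last step is the dependence on $\Phi^+$: one must check that non-conjugate choices of positive system — which can occur exactly when the pairings in (iii) are not all strictly positive — give inequivalent representations, so that $\Phi^+$ is a genuine parameter rather than redundant data.
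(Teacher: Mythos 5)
The paper does not actually prove this theorem: it is quoted as background, attributed to Harish-Chandra (\cite{HC-DS1, HC-DS2}) in the semisimple case and to the reformulation for general reductive groups following \cite[\S4.11]{Kaletha-regular} and \cite{Langlands89}. So there is no internal proof in the paper to compare against; what you have written is a synopsis of the classical argument, and as such it is sound. The skeleton is the right one: reduction to the connected semisimple case, Harish-Chandra's necessary condition that an elliptic (compact mod center) Cartan subgroup exist, the ansatz that \eqref{eqn:char:real} is the Weyl numerator over the elliptic Weyl denominator, realizability via Harish-Chandra's theory of tempered invariant eigendistributions (or, alternatively and later, the geometric Schmid/Atiyah--Schmid $L^2$-cohomology construction the paper itself cites a few lines further down), exhaustion through the Plancherel decomposition and the regularity theorem, and injectivity of the parametrization from the character being determined by its restriction to the elliptic set. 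You are also right to flag the role of $\Phi^+$ as the delicate part of the bijectivity claim.

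Two places deserve a little more care if this were to be a genuine proof rather than a guide. First, the ``disposal of the central torus'' is not a clean product decomposition: $G$ is only isogenous to $Z^\circ \times G_{\der}$, and the passage from $\theta$ to the algebraic character $\theta_{\der}$ of $(S_{\der})_{\bC}$ (used to define $\langle\theta,\check\alpha\rangle$) is exactly what the paper sets up just before the theorem; your reduction should be phrased through that mechanism rather than through an outright splitting. Second, the condition in (iii) is $\langle\theta,\check\alpha\rangle\geq 0$ rather than $>0$; in the strictly regular case $\Phi^+$ is redundant, as the paper notes, but when equality occurs for some $\alpha$ the representation is only \emph{essentially} square-integrable relative to the center, and your closing remark that non-conjugate $\Phi^+$ give inequivalent representations is then the content one needs from Langlands' extension of Harish-Chandra, not a consequence of the semisimple theory alone. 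Neither point is a gap in your strategy, but both are places where the brisk phrasing elides the part of the argument that is specific to the reductive (as opposed to semisimple) setting.
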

In particular, a group $G(\bR)$ has essentially square-integrable representations (also called discrete series) if and only if $G$ has an elliptic maximal torus. Moreover, if $\langle \theta, \check \alpha \rangle \neq 0$ for all $\alpha \in \Phi(G,S)$, then $\Phi^+$ is uniquely determined by $(S, \theta)$. In other words, if $\theta$ is sufficiently regular, i.e., $\langle \theta, \check \alpha \rangle \neq 0$, there is a unique essentially square-integrable representation (up to equivalence) attached to the $G(\bR)$-conjugacy class of pair $(S, \theta)$ and its character is provided by the formula \eqref{eqn:char:real}. In line with the Harish-Chandra Lefschetz principle, we will observe an analogous result in the $p$-adic world.

\subsection{Connection between representations of real and $p$-adic groups} \label{Section-connection-real-p-adic}
 If $F$ is a non-archimedean local field, $p$ is sufficiently large and $G$ is a connected reductive group over $F$ that splits over a tamely ramified extension of $F$, then there exists a notion of regular tame elliptic pairs $(S, \theta)$, see Definition \ref{Def-tame-regular-pair}, consisting of an elliptic maximal torus $S$ of $G$ and a regular character $\theta:S(F) \ra \bC^\times$, to which one can attach a supercuspidal representation $\pi_{(S,\theta)}$ whose Harish-Chandra character on sufficiently regular elements of $S(F)$ is given by \eqref{eqn:char:real} when interpreted appropriately, see Section \ref{Section-character-shallow}, in particular Theorem \ref{Thm-character-formula-shallow}.
 The resulting supercuspidal representations,  introduced and called regular supercuspidal representations by Kaletha (\cite{Kaletha-regular}) and discussed in Section \ref{Section-regular-supercuspidal}, form a large part of the supercuspidal representations as a special case of the general construction of supercuspidal representations in Section \ref{Section-Yus-construction} provided by Yu (\cite{Yu}).

In the setting of real groups, Harish-Chandra gave a classification of all essentially square-integrable representations by providing their characters. An explicit construction of these representations  was achieved about ten years later by Schmid (\cite{Schmid76}) using $L^2$-cohomology as conjectured by Kostant (\cite{Kostant-Conj}) and Langlands (\cite{Langlands-Conj}). In the $p$-adic world, the developments have been the other way round. First mathematicians constructed supercuspidal representations, see Section \ref{Section-history-construction}, and much later their characters were computed, see Section \ref{Section-characters}. It is a folklore conjecture that all supercuspidal representations of $p$-adic groups arise via compact induction from compact-mod-center open subgroups, and this is the case for all known constructions so far. We therefore begin by introducing Bruhat--Tits theory and the Moy--Prasad subgroups in Section \ref{Section-MP-BT}, which provides a framework for studying compact open subgroups of general $p$-adic groups. 


\section{Moy--Prasad filtration and Bruhat--Tits theory} \label{Section-MP-BT}
From now on, throughout the rest of the paper, unless mentioned otherwise, $F$ is a non-archimedean local field and $G$ a connected reductive group over $F$.

The Moy--Prasad filtration is a decreasing filtration of $G(F)$ by compact open subgroups that are normal inside each other and whose intersection is trivial. It is a refinement and generalization of the congruence filtration of $\GL_n(F)$. One usually starts with the definition of a Bruhat--Tits building that Bruhat and Tits (\cite{BT1, BT2}) attached to the reductive group $G$ over $F$ in 1972/1984, and then to each point in the Bruhat--Tits building, Moy and Prasad (\cite{MP1, MP2}) associated in 1994/1996 a filtration by compact open subgroups. In this survey, we will take a different approach and first introduce the Moy--Prasad filtration and use it to define the Bruhat--Tits building. This section is an expanded version of Section 3 of \cite{Fi-CDM}.

 We first introduce some notation that we use throughout the remainder of the survey. For every finite extension $E$ of $F$, we denote the ring of integers of $E$ by $\cO_E$ and a uniformizer by $\varpi_E$.\index{notation}{O@$\cO$}\index{notation}{OE@$\cO_E$}\index{notation}{OFbar@$\cO_{\bar F}$}\index{notation}{Fbar@$\bar F$}
 We might drop the index $E$ if $E=F$ and denote the residue field of $F$ by $\bF_q$\index{notation}{Fq@$\bF_q$}. 
  We also fix a valuation $\val:F \twoheadrightarrow \bZ \cup \{\infty\}$\index{notation}{val}. 

\subsection{The split case}
We assume in this subsection that $G$ is split over $F$. 
Let $T$ be a split maximal torus of $G$ and denote by $\Phi(G, T)$ the root system of $G$ with respect to $T$. We recall that a \textit{Chevalley system} \index{definition}{Chevalley system} $\{X_\alpha\}_{\alpha \in \Phi(G, T)}$ consists of a non-trivial element $X_\alpha$ in the one dimensional $F$-vector root subspace $\fg_\alpha(F) \subset \fg(F)$ for each root $\alpha$ of $G$ with respect to $T$ such that
$$ \Ad(w_\beta)(X_\alpha)=\pm X_{s_\beta(\alpha)} \, ,  \, \forall \alpha, \beta \in \Phi(G,T),$$
where $w_\beta$ is an element of the normalizer $N_G(T)(F)$ of $T$ in $G$ determined by $X_\beta$ whose image in the Weyl group $(N_G(T)/T)(F)$ is the simple reflection $s_\beta$ corresponding to $\beta$. For example,  if $G=\SL_2$ and $X_\beta=\begin{pmatrix}
0 & 1 \\ 
0 & 0
\end{pmatrix}$, then $w_\beta=\begin{pmatrix}
0 & 1 \\ 
-1 & 0
\end{pmatrix}$.
In general $w_\beta$ is defined as follows. 
For every root $\beta$, we let $x_\beta:\bG_a \xrightarrow{\simeq} U_\beta$ be the isomorphism between the additive group $\bG_a$ and the root group $ U_\beta \subset G$ attached to $\beta$  whose derivative sends $1 \in F=\bG_a(F)$ to $X_\beta$. Then 
$$w_\beta=x_\beta(1)x_{-\beta}(\epsilon)x_\beta(1)$$
where $\epsilon\in\{\pm 1\}$ is the unique element for which $x_\beta(1)x_{-\beta}(\epsilon)x_\beta(1)$ lies in the normalizer of $T$.

For example, for $\GL_n$ the collection $\{X_{\alpha_{i,j}}\}_{1 \leq i, j \leq n; i \neq j}$ consisting of the matrices with all entries zero except for a one at position $(i, j)$ forms a Chevalley system. 

This allows us to make the following definition, but we warn the reader that we have not seen anyone else use the terminology ``BT triple''.
\begin{Not}
	A \textit{BT triple} $(T, {X_\alpha}, x_{BT})$ consists of \index{definition}{BT triple}
	\begin{enumerate}[(i)]
		\item a split maximal torus $T$ of $G$,
		\item a Chevalley system $\{X_\alpha\}_{\alpha \in \Phi(G, T)}$, and
		\item $x_{BT} \in X_*(T)\otimes_{\bZ} \bR:= \Hom_F(\bG_m, T)\otimes_{\bZ} \bR$.
	\end{enumerate}
\end{Not} \index{notation}{X*Tupper@$X^*(T)$} \index{notation}{X*Tlower@$X_*(T)$}
Here $\bG_m$ denotes the multiplicative group scheme and $\Hom_F$ denotes homomorphisms in the category of $F$-group schemes. Then $X_*(T):=\Hom_F(\bG_m, T)$ is a free $\bZ$-module, hence $X_*(T)\otimes_{\bZ} \bR$ is a finite-dimensional real vector space. Moreover, we have a bilinear pairing between  $X^*(T):=\Hom_F(T,\bG_m)$ and $X_*(T)= \Hom_F(\bG_m, T)$ obtained by identifying $\Hom_F(\bG_m, \bG_m)$ with $\bZ$. We extend this map $\bR$-linearly in the second factor to obtain a map 
$$\< \cdot , \cdot \>:  X^*(T) \times   X_*(T)\otimes_{\bZ} \bR \ra \bR . $$
In particular, we may pair $x_{BT}$ with a root $\alpha \in \Phi(G,T)$ to obtain a real number $\<\alpha, x_{BT} \>$.

We now fix a BT triple $x=(T, \{X_\alpha\}, x_{BT})$ and define the Moy--Prasad filtration attached to it.

\textbf{Filtration of the torus.}

We set 
$$T(F)_0 = \{ t \in T(F) \, | \, \val(\chi(t))=0 \,  \, \forall \, \chi \in X^*(T)=\Hom_F(T,\bG_m)\} ,$$
which is the maximal bounded subgroup of $T(F)$. For $r \in \bR_{>0}$, we define
$$T(F)_r = \{ t \in T(F)_0 \, | \, \val(\chi(t)-1) \geq r \, \, \forall \,  \chi \in X^*(T)\} .$$
For example, if $G=\GL_n$ and $T$ is the torus consisting of diagonal matrices, then $T(F)_0$ consists of diagonal matrices whose entries are all in the invertible element $\cO^\times$ of $\cO$ and $T(F)_r$ consists of diagonal matrices whose entries are all in $1 + \varpi^{\ceil{r}}\cO$, where we recall that $\varpi$ is a uniformizer of $F$.

\textbf{Filtration of the root groups.}

Let $\alpha \in \Phi(G,T)$. We recall that the isomorphism $x_\alpha:\bG_a \ra U_\alpha$ is defined by requiring its derivative $dx_\alpha$ to send $1\in F=\bG_a(F)$ to $X_\alpha$.
For $r \in \bR_{\geq 0}$, we define the filtration subgroups of $U_\alpha(F)$ as follows\index{notation}{UalphaF@$U_{\alpha}(F)_{x,r}$}
$$ U_{\alpha}(F)_{x,r}:= x_\alpha(\varpi^{\ceil{r-\<\alpha, x_{BT}\>}}\cO) . $$
Let us consider the example of $G=\SL_2$ and $T$ the torus consisting of diagonal matrices.
\textbf{Example 1.} Let $x_1$ be the Bruhat--Tits triple 
$(T,
\left\{\begin{pmatrix}
0 & 1 \\ 0 & 0
\end{pmatrix},
\begin{pmatrix}
0 & 0 \\ 1 & 0
\end{pmatrix}
\right\},
0)$.
Let $\alpha$ correspond to the map $\begin{pmatrix}
t & 0 \\ 0 & t^{-1}
\end{pmatrix}
\mapsto t^2$  for $t \in F^\times$. Then $x_\alpha(y)=\begin{pmatrix}
1 & y \\ 0 & 1
\end{pmatrix}$ for $y \in F=\bG_a(F)$ and
$$ U_{\alpha}(F)_{x_1, r}=
\begin{pmatrix}
1 & \varpi^{\ceil{r}}\cO \\ 0 & 1
\end{pmatrix}
\, \, \text{ and } \, \,
U_{-\alpha}(F)_{x_1, r}=
\begin{pmatrix}
1 & 0 \\  \varpi^{\ceil{r}}\cO & 1
\end{pmatrix}.
$$
\textbf{Example 2.} \label{Explx2} Let $x_2$ be the Bruhat--Tits triple 
$(T,
\left\{\begin{pmatrix}
0 & 1 \\ 0 & 0
\end{pmatrix},
\begin{pmatrix}
0 & 0 \\ 1 & 0
\end{pmatrix}
\right\},
\frac{1}{4}\check \alpha)$, where $\check \alpha$ is the coroot of $\alpha$, i.e., the element of $X_*(T)$ that satisfies $\check\alpha(t)=\begin{pmatrix}
t & 0 \\
0 & t^{-1}
\end{pmatrix}$ for $t \in F^\times=\bG_m(F)$. 
Then
$$ U_{\alpha}(F)_{x_2, r}=
\begin{pmatrix}
1 & \varpi^{\ceil{r-\frac{1}{2}}}\cO \\ 0 & 1
\end{pmatrix}
\, \, \text{ and } \, \,
U_{-\alpha}(F)_{x_2, r}=
\begin{pmatrix}
1 & 0 \\  \varpi^{\ceil{r+\frac{1}{2}}}\cO & 1
\end{pmatrix}.
$$

\textbf{Filtration of $G(F)$.}\\
We define the filtration subgroup $G(F)_{x,r}$ of $G(F)$ for $r \in \bR_{\geq 0}$ to be the subgroup generated by $T(F)_r$ and $U_{\alpha}(F)_{x,r}$ for all roots $\alpha$, i.e.
$$ G(F)_{x,r}=\<T(F)_r, U_{\alpha}(F)_{x,r} \, | \, \alpha \in \Phi(G,T)\> .$$
If the ground field $F$ is clear from the context, we may also abbreviate $G(F)_{x,r}$ by $G_{x,r}$.
\index{notation}{Gxr@$G_{x,r}$} \index{notation}{GFxr@$G(F)_{x,r}$}

In the example of  $G=\SL_2$ for the two Bruhat--Tits triples above, we have for $r>0$
\[G_{x_1,0}=\SL_2(\cO) 
\, , \, 
\, \,
G_{x_1,r}=
\begin{pmatrix}
1+\varpi^{\ceil{r}}\cO & \varpi^{\ceil{r}}\cO \\  \varpi^{\ceil{r}}\cO & 1+\varpi^{\ceil{r}}\cO
\end{pmatrix}_{\det=1}
\]
and 
\[G_{x_2,0}=
\begin{pmatrix}
\cO & \cO \\  \varpi\cO & \cO
\end{pmatrix}_{\det=1}
\, , \, 
\, \,
G_{x_2,r}=
\begin{pmatrix}
1+\varpi^{\ceil{r}}\cO & \varpi^{\ceil{r-\frac{1}{2}}}\cO \\  \varpi^{\ceil{r+\frac{1}{2}}}\cO & 1+\varpi^{\ceil{r}}\cO
\end{pmatrix}_{\det=1} .
\]

\textbf{Filtration of $\fg(F)$ and $\fg^*(F)$ .}\\
One can analogously define a filtration $\fg_{x,r}$ of the Lie algebra $\fg(F)$ and a filtration $\fg^*_{x,r}$ of the $F$-linear dual $\fg^*(F)$ of the Lie algebra $\fg(F)$ as follows. Let $r$ be a real number, and recall that we write $\ft$ for the Lie algebra of the torus $T$.
Then we set 
$$\ft(F)_r = \{ X \in \ft(F) \, | \, \val(d\chi(X)) \geq r \, \, \forall \,  \chi \in X^*(T)\} ,$$
where $d \chi$ denotes the derivative of $\chi$, 
$$ \fg_{\alpha}(F)_{x,r}= \varpi^{\ceil{r-\<\alpha, x_{BT}\>}}\cO \, X_\alpha \subset \fg_\alpha(F) $$
for $\alpha \in \Phi(G,T)$, and 
$$ \fg(F)_{x,r}=\ft(F)_r \oplus \bigoplus_{\alpha \in \Phi(G,T)} \fg_{\alpha}(F)_{x,r}. $$
We define the filtration subspace $\fg^*(F)_{x,r}$ of the dual of the Lie algebra by
$$ \fg^*(F)_{x,r}=\{ X \in \fg^*(F) \, | \, X(Y) \in \varpi \cO \, \text{ for all } \, Y \in \fg(F)_{x,s} \, \text{ with } s>-r \} . $$
If the ground field $F$ is clear from the context, we may also abbreviate $\fg(F)_{x,r}$ and $\fg^*(F)_{x,r}$ by $\fg_{x,r}$ and $\fg^*_{x,r}$, respectively.
\index{notation}{gxr@$\fg_{x,r}$} \index{notation}{gFxr@$\fg(F)_{x,r}$} \index{notation}{gxr*@$\fg^*_{x,r}$} \index{notation}{gFxr*@$\fg^*(F)_{x,r}$}

\subsubsection{Properties of the Moy--Prasad filtration}

\begin{Def}
	A \textit{parahoric} subgroup of $G$ is a subgroup of the form $G_{x,0}$ for some BT triple $x$.
\end{Def}
\index{definition}{parahoric subgroup}

For $r \in \bR_{\geq 0}$,  we write $G_{x,r+}=\bigcup_{s>r}G_{x,s}$ and $\fg_{x,r+}=\bigcup_{s>r}\fg_{x,s}$.\index{notation}{Gxr+@$G_{x,r+}$}\index{notation}{gxr+@$\fg_{x,r+}$}

We collect a few facts about this filtration that demonstrate the richness of its structure.
\begin{Fact} \label{Fact-BT-triple-properties} Let $x$ be a BT triple.
	\begin{enumerate}[(a)] 
	\item $G_{x,r}$ is a normal subgroup of $G_{x,0}$ for all $r \in \bR_{\geq 0}$.
	\item The quotient $G_{x,0}/G_{x,0+}$ is the group of the $\bF_q$-points of a reductive group $\bG_x$ defined over the residue field $\bF_q$ of $F$.
	\item For $r \in \bR_{> 0}$, the quotient $G_{x,r}/G_{x,r+}$ is abelian and can be identified with an $\bF_q$-vector space.
	\item \label{Facts-filtrations-reps} Let $r>0$. Since $G_{x,r}$ is a normal subgroup of $G_{x,0}$, the group $G_{x,0}$ acts on $G_{x,r}$ via conjugation. This action descends to an action of the quotient $G_{x,0}/G_{x,0+}$ on the vector space $G_{x,r}/G_{x,r+}$ and the resulting action is (the $\bF_q$-points of) a linear algebraic action, i.e., corresponds to a morphism from $\bG_x$ to $\GL_{\dim_{\bF_q}(G_{x,r}/G_{x,r+})}$ over $\bF_q$.
	\item We have the following isomorphism that is often referred to as the ``Moy--Prasad isomorphism'': $G_{x,r}/G_{x,r+} \simeq \fg_{x,r}/\fg_{x,r+}$ for $r \in \bR_{> 0}$ and more general $G_{x,r}/G_{x,s} \simeq \fg_{x,r}/\fg_{x,s}$ for $r, s \in \bR_{> 0}$ with $2r \geq s>r$.
\end{enumerate}
\end{Fact}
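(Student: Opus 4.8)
The plan is to treat the split case by explicit computation with the generators of $G_{x,r}$ provided by the definition, namely $T(F)_r$ together with the root group pieces $U_\alpha(F)_{x,r}$, and the Chevalley commutator relations. Two elementary facts carry most of the weight: $\ceil a + \ceil b \geq \ceil{a+b}$ for real $a,b$, and $\<i\alpha+j\beta, x_{BT}\> = i\<\alpha, x_{BT}\> + j\<\beta, x_{BT}\>$, so that the Chevalley formula $[x_\alpha(y), x_\beta(z)] = \prod_{i,j>0} x_{i\alpha+j\beta}(c_{ij}y^iz^j)$ carries a commutator of an element of $U_\alpha(F)_{x,r}$ with one of $U_\beta(F)_{x,s}$ into $G_{x,r+s}$; similarly $T(F)_0$ acts on $x_\alpha(y)$ by $y \mapsto \alpha(t)y$ with $\alpha(t)\in\cO^\times$, so it preserves each $U_\alpha(F)_{x,r}$, and $[x_\alpha(y), t] = x_\alpha((\alpha(t)^{-1}-1)y) \in U_\alpha(F)_{x,r+s}$ for $t \in T(F)_s$. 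Part (a) then follows by checking $[G_{x,0}, G_{x,r}] \subseteq G_{x,r}$ on these generators, and part (d) from the stronger $[G_{x,0+}, G_{x,r}] \subseteq G_{x,r+}$ coming from the same relations: the conjugation action of $G_{x,0}$ on $G_{x,r}/G_{x,r+}$ then factors through $G_{x,0}/G_{x,0+}$, and in the explicit root-space and torus coordinates it is given by the characters $\alpha$, the adjoint action on $\fg_\alpha$, and the linearizations of the Chevalley relations — all polynomial — so it is the set of $\bF_q$-points of a linear algebraic representation of $\bG_x$ (concretely the relevant graded piece of the adjoint representation).

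For (b), let $\bG_x$ be the split reductive $\bF_q$-group with maximal torus the reduction of $T$ (for split $T \cong \bG_m^{\dim T}$ one has $T(F)_0/T(F)_{0+} \cong (\bF_q^\times)^{\dim T}$) and root system $\Phi_x := \{\alpha \in \Phi(G,T) : \<\alpha, x_{BT}\> \in \bZ\}$, which is a closed symmetric subsystem and hence does arise from a reductive group. The surjection $G_{x,0} \twoheadrightarrow \bG_x(\bF_q)$ is the natural reduction map; for $\alpha \notin \Phi_x$ one has $\ceil{-\<\alpha,x_{BT}\>} = \ceil{s-\<\alpha,x_{BT}\>}$ for all small $s>0$, so $U_\alpha(F)_{x,0} = U_\alpha(F)_{x,0+}$ and those roots do not contribute, while for $\alpha \in \Phi_x$ the Chevalley relations reduce mod $\fp$ to those of $\bG_x$. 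The kernel is $G_{x,0+}$ by construction. For (c): for $r>0$ the commutator relations give $[G_{x,r},G_{x,r}] \subseteq G_{x,2r} \subseteq G_{x,r+}$, so $G_{x,r}/G_{x,r+}$ is abelian, and it is the product of the $\bF_q$-lines $U_\alpha(F)_{x,r}/U_\alpha(F)_{x,r+}$ (over the $\alpha$ at which $\ceil{r-\<\alpha,x_{BT}\>}$ jumps) and the $\bF_q$-space $T(F)_r/T(F)_{r+} \cong (\varpi^{\ceil r}\cO/\varpi^{\ceil r+1}\cO)^{\dim T}$, hence is itself an $\bF_q$-vector space.

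The remaining and most delicate point is the Moy--Prasad isomorphism (e). First note that for $2r \geq s > r$ both $G_{x,r}/G_{x,s}$ and $\fg_{x,r}/\fg_{x,s}$ are abelian, on the group side because $[G_{x,r},G_{x,r}] \subseteq G_{x,2r} \subseteq G_{x,s}$. Build the map on coordinates: $x_\alpha(y) \mapsto yX_\alpha$ on root groups, and on the torus, via $T \cong \bG_m^{\dim T}$, the coordinatewise $(1+m) \mapsto m$, which is a homomorphism $(1+\varpi^{\ceil r}\cO)/(1+\varpi^{\ceil s}\cO) \to \varpi^{\ceil r}\cO/\varpi^{\ceil s}\cO$ exactly because $(1+m)(1+m') = 1+(m+m')+mm'$ with $mm' \in \varpi^{2\ceil r}\cO \subseteq \varpi^{\ceil s}\cO$ (since $2\ceil r \geq \ceil{2r} \geq \ceil s$). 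Assembling these over the generating set of $G_{x,r}$, well-definedness amounts to the assertion that reordering the generators, or multiplying out two normal forms, produces only correction terms that are products of at least two ingredients of filtration $\geq r$ and hence lie in $G_{x,2r} \subseteq G_{x,s}$ (respectively $\fg_{x,2r}\subseteq\fg_{x,s}$); bijectivity then follows because both sides become the direct sum of the matching graded pieces, and the statement for $s=r+$ is the limiting case. The step I expect to be the main obstacle is precisely this bookkeeping — verifying that every correction term (Chevalley commutator corrections, the quadratic-and-higher tail of $1+m\mapsto m$, the reordering terms) lands in filtration $\geq 2r$ uniformly, including in small residue characteristic, where the hypothesis $2r \geq s$ is exactly what makes it go through; the rest is a mechanical reduction to the two inequalities above.
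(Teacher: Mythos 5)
The paper states this as a Fact and does not prove it; it is a summary of classical results of Bruhat--Tits and Moy--Prasad, so there is no in-paper argument to compare against. What you sketch is the standard hands-on approach in the split case (commutators via the Chevalley relations, explicit torus coordinates, reduction mod $\fp$), and the two inequalities you isolate — $\ceil a + \ceil b \geq \ceil{a+b}$ and linearity of $\<\cdot,x_{BT}\>$ — are indeed the engine of the whole thing. Your identification of $\Phi_x=\{\alpha:\<\alpha,x_{BT}\>\in\bZ\}$ as the root datum of $\bG_x$ and the observation that $U_\alpha(F)_{x,0}=U_\alpha(F)_{x,0+}$ for $\alpha\notin\Phi_x$ are both correct and to the point; the treatment of (c) and (e), including the role of $2\ceil r\geq\ceil{2r}\geq\ceil s$ in absorbing the quadratic tail of $1+m\mapsto m$, is right.

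There are two genuine gaps in what you wrote, both at the spots one would expect. First, the Chevalley commutator formula $[x_\alpha(y),x_\beta(z)]=\prod_{i,j>0}x_{i\alpha+j\beta}(c_{ij}y^iz^j)$ requires $\alpha+\beta\neq 0$; the case $\beta=-\alpha$ produces torus (and $U_\alpha$, $U_{-\alpha}$) terms and must be handled by the explicit rank-one $\SL_2$ computation. This case is needed for normality in (a), for the commutator estimate $[G_{x,r},G_{x,r}]\subseteq G_{x,2r}$ that you use in (c) and (e), and for well-definedness of the reduction homomorphism in (b), so it cannot be omitted even in a sketch. Second, in (b) the assertion that "the kernel is $G_{x,0+}$ by construction" is not a construction: $G_{x,0+}\subseteq\ker$ is immediate, but the reverse containment requires the Iwahori factorization $G_{x,0}=\prod_{\alpha<0}U_{\alpha,x,0}\cdot T(F)_0\cdot\prod_{\alpha>0}U_{\alpha,x,0}$ (uniqueness of the expression) together with injectivity of the reduction on each factor; this is one of the substantive inputs, not a formality. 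Finally, a minor imprecision: in (d) the graded piece $G_{x,r}/G_{x,r+}$ is the $\bF_q$-points of a Vinberg--Levy-type graded piece of $\fg$ attached to the $\bG_m$-grading induced by $x_{BT}$, not literally a graded piece of the adjoint representation of $\bG_x$ itself (whose Lie algebra only sees $\Phi_x$); and your formula $T(F)_r/T(F)_{r+}\cong(\varpi^{\ceil r}\cO/\varpi^{\ceil r+1}\cO)^{\dim T}$ is only correct for $r\in\bZ$ (for non-integral $r$ the quotient is trivial because $\ceil s=\ceil r$ for $s$ slightly above $r$).
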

		
In fact we have a rather good understanding of the representations occurring in (\ref{Facts-filtrations-reps}). In \cite{Fi-MP} they are described explicitly in terms of Weyl modules. Previously they were also realized using Vinberg--Levy theory by Reeder and Yu (\cite{ReederYu}), which was generalized in \cite{Fi-MP}.	
		
\subsubsection{The Bruhat--Tits building}	
\begin{Def}
	The \textit{(reduced) Bruhat--Tits building} $\sB(G, F)$\index{notation}{BGF@$\sB(G, F)$}\index{definition}{Bruhat--Tits building} of $G$ over $F$ is as a set the quotient of the set of BT triples by the following equivalence relation: Two BT triple $x_1$ and $x_2$ are equivalent if and only if $G_{x_1, r}=G_{x_2,r}$ for all $r \in \bR_{\geq 0}$.
\end{Def}
As a consequence of the definition, for $x \in \sB(G,T)$, we may write $G_{x,r}$ for the Moy--Prasad filtration attached to any BT triple in the equivalence class of $x$.

The Bruhat--Tits building $\sB(G,F)$ admits an action of $G(F)$ that is determined by the property 
$$G_{g.x,r}=gG_{x,r}g^{-1} \, \, \forall \, r \in \bR_{\geq 0}, g \in G(F) .$$

We will now equip the Bruhat--Tits building with more structure. 

\textbf{Apartments as affine spaces.} \index{notation}{ATF@$\sA(T,F)$}
\begin{Def}
	For a split maximal torus $T$, we call the subset of $\sB(G,F)$ that can be represented by BT triples whose first entry is the given torus $T$, i.e.
	$$ \sA(T,F):=\{(T,\{X_\alpha\}, x_{BT})\}/_\sim \, \, \subset \sB(G,F)$$
the \textit{apartment} of $T$. \index{definition}{apartment}
\end{Def}
We fix a split maximal torus $T$ and a Chevalley system $\{X_\alpha\}_{\alpha \in \Phi(G,T)}$. Then it turns out that every element in $\sA(T,F)$ can be represented by a BT triple whose first two entries are the torus $T$ and the fixed Chevalley system $\{X_\alpha\}_{\alpha \in \Phi(G,T)}$. Moreover, two BT triples $(T,\{X_\alpha\}, x_{BT, 1})$ and $(T,\{X_\alpha\}, x_{BT, 2})$ are equivalent if and only if $x_{BT, 2}-x_{BT, 1}$ lies in the subspace $X_*(Z(G))\otimes \bR$, where $Z(G)$\index{Z(G)@$Z(G)$}  denotes the center of $G$. Note that $X_*(Z(G))\otimes \bR$ is trivial when the center $Z(G)$ of $G$ is finite. Thus the set $\sA(T,F)$ is isomorphic to $X_*(T)\otimes \bR / X_*(Z(G))\otimes \bR$, and we use this isomorphism to equip $\sA(T,F)$ with the structure of an affine space over the real vector space $X_*(T)\otimes \bR / X_*(Z(G))\otimes \bR$. While the isomorphism of $\sA(T,F)$ with $X_*(T)\otimes \bR / X_*(Z(G))\otimes \bR$ depends on the choice of the  Chevalley system $\{X_\alpha\}_{\alpha \in \Phi(G,T)}$, the structure of $\sA(T,F)$ as an affine space does not. In fact, the choice of a Chevalley system turns the affine space into a vector space by choosing a base point.

\textbf{Polysimplicial structure on apartments.}

Let $T$ be a split maximal torus of $G$. For $\alpha \in \Phi(G,T)$, we define the following set of hyperplanes of the apartment $\sA(T, F)$:
$$ \Psi_\alpha:=\left\{\text{ hyperplanes } H \subset \sA(T,F) \text{ satisfying } \begin{array}{ll}
U_{\alpha}(F)_{x,0}=U_{\alpha}(F)_{y,0} & \forall x, y \in H \\
U_{\alpha}(F)_{x,0} \neq U_{\alpha}(F)_{x,0+} & \forall x \in H 
\end{array} \right\}  . $$
We set $$ \Psi := \bigcup_{\alpha \in \Phi(G,T)} \Psi_\alpha $$ and use these hyperplanes to turn the apartment $\sA(T,F)$ into the geometric realization of a polysimplicial complex. This means the connected components of the complement of the union of the hyperplanes in $\Psi$ are the maximal dimensional polysimplices, which are also called \textit{chambers}. \index{definition}{chamber}

\begin{figure}[h]
	\centering
	\includegraphics[width=0.700\textwidth]{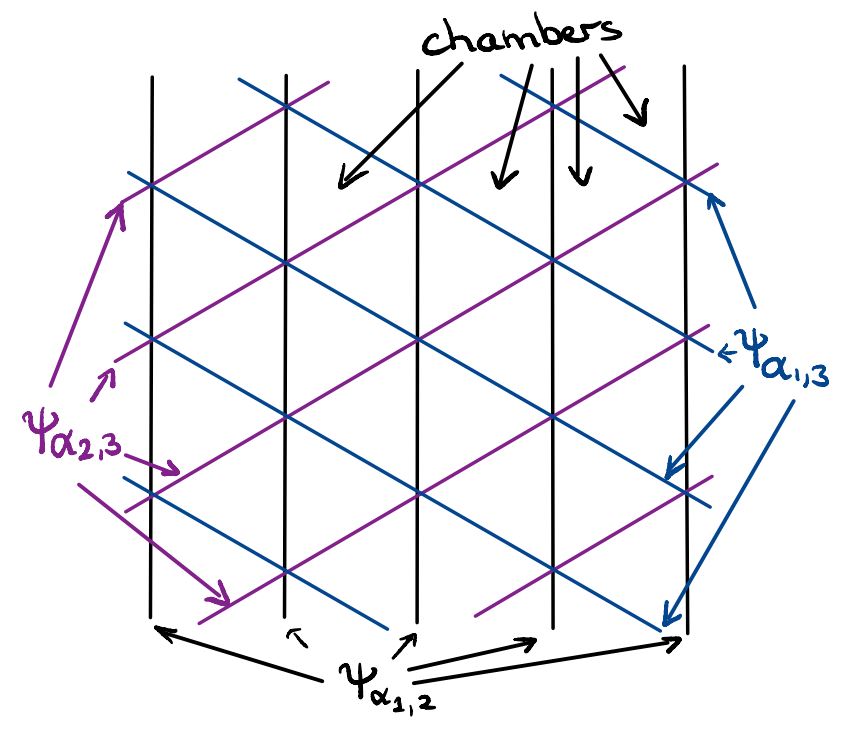}
	\caption{Excerpt of an apartment for $\SL_3$ with hyperplanes, where $\alpha_{i, j}$ is the root corresponding to $\text{diag}(t_1, t_2, t_3) \mapsto t_it_j^{-1}$}
	\label{figure-hyperplanes-sl3}
\end{figure} 

\textbf{Polysimplicial structure on the Bruhat--Tits building.}
The polysimplicial structure on the apartments yields a polysimplicial structure on the Bruhat--Tits building $\sB(G,F)$, which satisfies the properties of an abstract building. In order to recall the notion of an abstract building, we need to introduce some notation following \cite[\S1.5]{KP-BTbook}. 

A \textit{chamber complex} is a polysimplicial complex $\sB$ in which every facet is contained in a maximal facet, called \textit{chamber}, and given two chambers $C$ and $C'$ there exists a sequence $C=C_1\neq C_2 \neq C_3 \neq \hdots \neq C_n = C'$ such that $C_i \in \sB, C_i \cap C_{i+1} \in \sB$ and $\not\exists C''$ with $C_i \cap C_{i+1} \subsetneq C'' \subsetneq C_i$ or $C_i \cap C_{i+1} \subsetneq C'' \subsetneq C_{i+1}$. A chamber complex is called \textit{thick} if each facet of codimension one is the face of at least three chambers and is called \textit{thin} if each facet of codimension one is the face of exactly two chambers.

\begin{Def}[see {\cite[Definition~1.5.5]{KP-BTbook}}]
	A \textit{building} is a chamber complex $\sB$ equipped with a collection of subcomplexes, called \textit{apartments}, satisfying the following axioms.\index{definition}{building}
	\begin{enumerate}[(i)]
		\item $\sB$ is a thick chamber complex.
		\item Each apartment is a thin chamber complex.
		\item Any two chambers belong to an apartment.
		\item Given two apartments $\sA_1$ and $\sA_2$, and two facets $\sF_1, \sF_2 \in \sA_1 \cap \sA_2$, there exits an isomorphism $\sA_1 \ra \sA_2$ of chamber complexes that leaves invariant $\sF_1, \sF_2$ and all of their faces.
	\end{enumerate}
\end{Def}

\begin{Fact}
	The Bruhat--Tits building $\sB(G, F)$  with its apartments $\sA(T, F)$ attached to maximal split tori $T$ of $G$ is a geometric realization of a building.
\end{Fact}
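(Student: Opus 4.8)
The plan is to verify the four axioms (i)--(iv) in the definition of a building for the polysimplicial complex $\sB(G,F)$ together with its subcomplexes $\sA(T,F)$, $T$ ranging over the split maximal tori of $G$. Axioms (i) and (ii) are local, respectively combinatorial, and can be checked with what has been set up above; axioms (iii) and (iv) encode the global combinatorics of the $G(F)$-action, and I would deduce them from the Iwahori--Bruhat decomposition, the hard analytic input of \cite{BT1, BT2} (see also \cite{KP-BTbook}).

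First I would establish axiom (ii), that each $\sA(T,F)$ is a thin chamber complex. Fixing a Chevalley system to identify $\sA(T,F)$ with $V:=X_*(T)\otimes\bR\,/\,X_*(Z(G))\otimes\bR$, the explicit formula $U_\alpha(F)_{x,r}=x_\alpha(\varpi^{\ceil{r-\langle\alpha,x_{BT}\rangle}}\cO)$ shows that $\Psi_\alpha$ consists precisely of the affine hyperplanes of $V$ along which $\langle\alpha,\cdot\rangle$ is constant with value in a fixed discrete subset of $\bR$; in particular $\Psi$ is a locally finite arrangement. Hence the group $W_{\mathrm{aff}}$ generated by the orthogonal reflections in the hyperplanes of $\Psi$ is a crystallographic affine reflection group acting properly on $V$, and the structure theory of such groups shows that the chambers, i.e.\ the connected components of $V\setminus\bigcup\Psi$, together with their faces form a thin chamber complex on which $W_{\mathrm{aff}}$ acts simply transitively. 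Using the $G(F)$-action $G_{g.x,r}=gG_{x,r}g^{-1}$, I would further note that every reflection in a hyperplane of $\Psi$ is induced by an element of $N_G(T)(F)$ --- finite-Weyl reflections by the Chevalley elements $w_\alpha$, affine translations by elements $\check\alpha(\varpi)\in T(F)$ --- so that in particular $N_G(T)(F)$ acts transitively on the chambers of $\sA(T,F)$.

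Next I would check axiom (i), thickness: a codimension-one facet $F$ of $\sB(G,F)$ lies on a single wall, so by Fact~\ref{Fact-BT-triple-properties}(b) the parahoric $G_{F,0}$ has reductive quotient $\bG_F$ over $\bF_q$ of semisimple rank one. The chambers of $\sB(G,F)$ having $F$ as a face correspond $G_{F,0}$-equivariantly to the Iwahori subgroups (minimal parahorics) contained in $G_{F,0}$, hence to the Borel subgroups of $\bG_F$, of which there are at least $q+1\geq 3$ (the variety of Borel subgroups of a reductive $\bF_q$-group of semisimple rank one is a form of $\bP^1$ with a rational point, hence $\bP^1$). Thus $F$ is a face of at least three chambers, and $\sB(G,F)$ is thick.

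Finally, for axioms (iii) and (iv), I would first observe that $G(F)$ acts transitively on apartments, since any two maximal split tori of $G$ are $G(F)$-conjugate, and then invoke the Iwahori--Bruhat decomposition $G(F)=\bigsqcup_w I\dot wI$, the union over a set of representatives $\dot w$ of $N_G(T)(F)/(N_G(T)(F)\cap I)$, where $I:=G_{C_0,0}$ is the Iwahori attached to a chamber $C_0\subset\sA:=\sA(T,F)$; here one uses that $I$ fixes $C_0$ pointwise and that $N_G(T)(F)$ preserves $\sA$. Axiom (iii) then follows at once: given chambers $C_1=g_1C_0$ and $C_2=g_2C_0$, write $g_1^{-1}g_2=i_1\dot wi_2$ with $i_1,i_2\in I$; then $g_1i_1\sA$ is an apartment containing both $C_1=g_1i_1C_0$ and $C_2=g_1i_1\dot wC_0$. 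Axiom (iv) follows from the same Tits-system structure on $(G(F),I,N_G(T)(F))$ via the standard dictionary between Tits systems and their buildings (\cite[\S1.5]{KP-BTbook}), which simultaneously identifies the resulting abstract building with $\sB(G,F)$, since chambers, facets and the incidence relation were defined in the same way and the $W_{\mathrm{aff}}$-action on $\sA$ matches the one found above. I expect the main obstacle to be precisely the Iwahori--Bruhat decomposition together with the verification that $(G(F),I,N_G(T)(F))$ is an affine Tits system: this rests on an Iwahori factorization of $I$ as an ordered product of the affine root subgroups $U_\alpha(F)_{C_0,0}$ and of $T(F)_0$, and on the commutation relations among the subgroups $U_\alpha(F)_{x,r}$ --- the technical heart of Bruhat--Tits theory, which for a survey one imports from \cite{BT1, BT2} or \cite{KP-BTbook}.
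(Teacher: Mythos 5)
The paper states this as a \emph{Fact} without proof, implicitly deferring to Bruhat and Tits \cite{BT1, BT2} and to the exposition in \cite{KP-BTbook}; there is no in-paper argument for you to be compared against. Your sketch is nonetheless a sound outline of how the proof runs in that literature: realize $\sA(T,F)$ as the Coxeter complex of an affine reflection group; count chambers adjacent to a codimension-one facet via the $\bF_q$-points of the flag variety $\bP^1$ of the semisimple-rank-one reductive quotient $\bG_x$; and reduce axioms (iii)--(iv) to the affine Tits-system (BN-pair) structure on $(G(F),I,N_G(T)(F))$, whose establishment --- Iwahori factorization and the commutation relations among affine root subgroups --- you correctly flag as the technical heart that one imports rather than reproves in a survey. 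One caveat on logical ordering: your thickness argument quietly uses that chambers having a given codimension-one facet as a face are in $G_{F,0}$-equivariant bijection with Iwahori subgroups contained in $G_{F,0}$ (equivalently with Borel subgroups of $\bG_F$), and that $G_{F,0}$ acts transitively on them; this is not a formal consequence of the definitions in the paper and itself rests on the same Iwahori factorization / Tits-system machinery, so axiom (i) in your plan is not independent of the input needed for (iii)--(iv). A second minor point: the identification of semisimple rank of $\bG_x$ with the codimension of the facet is an additional (true) ingredient beyond Fact~\ref{Fact-BT-triple-properties}(b) as stated. Neither remark invalidates the sketch; they just mean slightly more of it belongs to the ``imported package'' than the phrasing suggests.
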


\subsection{The non-split (tame) case}
We first assume that $G$ splits over an unramified Galois field extension $E$ over $F$. In that case all the above definitions can be descended to $G$ by taking $\Gal(E/F)$-fixed points of the corresponding objects for $G_E$. 
More precisely, we set  \index{notation}{GFxr@$G(F)_{x,r}$}
$$ G(F)_{x,r}=G(E)_{x,r}^{\Gal(E/F)} , $$
where $G(E)_{x,r}$ is defined using the valuation on $E$ that extends the valuation $\val$ on $F$.
As in the split case, we may abbreviate $G(F)_{x,r}$ by $G_{x,r}$. 

Via the action of $\Gal(E/F)$ on $G(E)$ and hence on its filtration subgroups, we obtain an action of $\Gal(E/F)$ on the Bruhat--Tits building $\sB(G,E)$ and we define $$\sB(G,F)=\sB(G,E)^{\Gal(E/F)} . $$

More generally, if we only assume that $G$ splits over a tamely ramified Galois field extension $E$ over $F$, then we have for $r>0$
$$G_{x,r}= G(F)_{x,r}=G(E)_{x,r}^{\Gal(E/F)}  , $$
where $G(E)_{x,r}$ is defined using the valuation on $E$ that extends the valuation $\val$ on $F$ and $U_{\alpha}(E)_{x,r}= x_\alpha(\varpi_E^{\ceil{e(r-\<\alpha, x_{BT}\>)}}\cO_E)$ with $e$ the ramification index of the field extension $E$ over $F$.
Defining the parahoric subgroup $G(F)_{x,0}$ is slightly more subtle in general. It is a finite index subgroup  of $G(E)_{x,0}^{\Gal(E/F)}$. The parahoric subgroup $G(F)_{x,0}$ being occasionally a slightly smaller group than $G(E)_{x,0}^{\Gal(E/F)}$ will ensure that $G(F)_{x,0}/G(F)_{x,0+}$ are the $\bF_q$-points of a connected reductive group rather than a potentially disconnected group. More precisely, the parahoric subgroup $G(F)_{x,0}$ is defined by
$$ G_{x,0}= G(F)_{x,0}=G(E)_{x,0}^{\Gal(E/F)} \cap G(F)^0 $$
for some explicitly constructed normal subgroup $G(F)^0 \subset G(F)$. We refer the interested reader to the literature, e.g., \cite{KP-BTbook}, for the precise definition of $G(F)^0$ and only note that $G(F)^0=G(F)$ if $G$ is simply connected semi-simple, e.g., for $G=\SL_n$ we have $\SL_n(F)^0=\SL_n(F)$.

As in the unramified setting, using the action of $\Gal(E/F)$ on $G(E)$ and hence on its filtration subgroups, we obtain an action of $\Gal(E/F)$ on the Bruhat--Tits building $\sB(G,E)$ and we define $$\sB(G,F)=\sB(G,E)^{\Gal(E/F)} . $$

Similarly, we have for the Lie algebra \index{notation}{gxr@$\fg_{x,r}$} \index{notation}{gFxr@$\fg(F)_{x,r}$}
$$ \fg_{x,r}=\fg(F)_{x,r}=(\fg(E)_{x,r})^{\Gal(E/F)} .$$

We note that the above definitions rely on the extension $E$ over $F$ being tame, but are independent of the choice of $E$.   

\begin{Aside}
	If $G$ splits only over a wildly ramified extension $E/F$, then the space of fixed vectors of the Galois action on the Bruhat--Tits building over $E$ might be larger than the Bruhat--Tits building defined over $F$. We will not introduce the Bruhat--Tits building in that generality in this survey since we mostly restrict to the tame case, and we instead refer the interested reader to the literature, e.g., the original articles by Bruhat and Tits (\cite{BT1, BT2}) and the recent book on this topic by Kaletha and Prasad \cite{KP-BTbook}. 
\end{Aside}

The Moy--Prasad filtration still satisfies the nice properties as in Fact \ref{Fact-BT-triple-properties}, i.e., more precisely
\begin{Fact} \label{Fact-MP-properties} Let $x \in \sB(G,F)$. 
	\begin{enumerate}[(a)] 
		\item $G_{x,r}$ is a normal subgroup of $G_{x,0}$ for all $r \in \bR_{\geq 0}$.
		\item The quotient $G_{x,0}/G_{x,0+}$ is the group of the $\bF_q$-points of a reductive group $\bG_x$ defined over the residue field $\bF_q$ of $F$.
		\item For $r \in \bR_{> 0}$, the quotient $G_{x,r}/G_{x,r+}$ is abelian and can be identified with an $\bF_q$-vector space.
		\item \label{Facts-filtrations-reps} Let $r>0$. Since $G_{x,r}$ is a normal subgroup of $G_{x,0}$, the group $G_{x,0}$ acts on $G_{x,r}$ via conjugation. This action descends to an action of the quotient $G_{x,0}/G_{x,0+}$ on the vector space $G_{x,r}/G_{x,r+}$ and the resulting action is (the $\bF_q$-points of) a linear algebraic action, i.e., corresponds to a morphism from $\bG_x$ to $\GL_{\dim_{\bF_q}(G_{x,r}/G_{x,r+})}$ over $\bF_q$.
		\item \label{MP-isom} Under the assumption that $G$ splits over a tamely ramified field extension of $F$, we have the following ``Moy--Prasad isomorphism'': $G_{x,r}/G_{x,r+} \simeq \fg_{x,r}/\fg_{x,r+}$ for $r \in \bR_{> 0}$ and more general $G_{x,r}/G_{x,s} \simeq \fg_{x,r}/\fg_{x,s}$ for $r, s \in \bR_{> 0}$ with $2r \geq s>r$.
	\end{enumerate}
\end{Fact}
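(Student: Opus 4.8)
The plan is to deduce all five assertions from the split case, Fact~\ref{Fact-BT-triple-properties}, by Galois descent along a finite tamely ramified Galois extension $E/F$ over which $G$ splits, using the identifications $G(F)_{x,r}=G(E)_{x,r}^{\Gal(E/F)}$ for $r>0$, the identification $G(F)_{x,0}=G(E)_{x,0}^{\Gal(E/F)}\cap G(F)^0$, and $\fg(F)_{x,r}=(\fg(E)_{x,r})^{\Gal(E/F)}$ recalled above. Since descent behaves differently on the residue extension and on the (prime-to-$p$) ramification, I would factor $E/F$ as $E\supseteq E^{\mathrm{ur}}\supseteq F$, with $E^{\mathrm{ur}}/F$ the maximal unramified subextension and $E/E^{\mathrm{ur}}$ totally tamely ramified, and treat the two layers separately. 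Assertion (a) is then immediate: $G(E)_{x,r}$ is normal in $G(E)_{x,0}$ by Fact~\ref{Fact-BT-triple-properties}(a), and normality survives intersecting with the subgroup $G(E)^{\Gal(E/F)}$ of Galois fixed points and, when $r=0$, with the normal subgroup $G(F)^0$.

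Assertion (b) is the technical heart and the step I expect to be the main obstacle. Over $E$, Fact~\ref{Fact-BT-triple-properties}(b) applied to $G_E$ identifies $G(E)_{x,0}/G(E)_{x,0+}$ with the $\bF_{q'}$-points of a connected reductive group over the residue field $\bF_{q'}$ of $E$, arising as the maximal reductive quotient of the special fibre of a smooth affine $\cO_E$-model of $G_E$. To descend this I would (i) construct the Bruhat--Tits integral model $\cG_x$ over $\cO_F$ with $\cG_x(\cO_F)=G(F)_{x,0}$ by Galois descent of the $\cO_E$-model --- here tameness is essential, as it makes the wild inertia trivial and the residue field perfect, so that faithfully flat / \'etale descent together with the theory of the connected N\'eron model of the maximal torus apply --- and (ii) identify $G(F)_{x,0}/G(F)_{x,0+}$ with the $\bF_q$-points of the maximal reductive quotient $\bG_x$ of the special fibre of $\cG_x$. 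It is precisely to make $\bG_x$ connected, rather than the possibly disconnected group of Galois invariants of the reductive quotient over $\bF_{q'}$, that $G(F)^0$ was defined as it was. Rather than reprove this, I would cite the construction of $\cG_x$ and the computation of its reductive quotient from \cite{BT2} (see also \cite{KP-BTbook}).

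For (c), (d) and (e) I would again start over $E$, where they hold by Fact~\ref{Fact-BT-triple-properties}, and pass to $\Gal(E/F)$-invariants. For (c), $G(E)_{x,r}/G(E)_{x,r+}$ is an $\bF_{q'}$-vector space, hence a finite abelian $p$-group: along $E/E^{\mathrm{ur}}$ the Galois group has order prime to $p$, so taking invariants is exact and yields an $\bF_{q'}$-vector space, and along $E^{\mathrm{ur}}/F$ the invariants are computed by Lang's theorem (the group in question is the $\bF_{q'}$-points of a vector group with $\bF_q$-structure), giving an $\bF_q$-vector space; the same argument applies to $\fg(E)_{x,r}/\fg(E)_{x,r+}$. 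Assertion (d) follows because the conjugation action of $G(E)_{x,0}$ on $G(E)_{x,r}/G(E)_{x,r+}$ is algebraic and $\Gal(E/F)$-equivariant, so it descends to an $\bF_q$-linear algebraic action of $\bG_x$ compatible with (b). For (e), the Moy--Prasad isomorphisms $G(E)_{x,r}/G(E)_{x,s}\simeq\fg(E)_{x,r}/\fg(E)_{x,s}$ for $2r\ge s>r>0$ are canonical, hence $\Gal(E/F)$-equivariant, so taking invariants gives the corresponding isomorphism over $F$; tameness enters here exactly as over $E$, ensuring that the root-group coordinates and the commutator/``exponential'' identifications underlying these maps exist integrally.
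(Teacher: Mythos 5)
The paper does not actually give a proof of this Fact. It is stated without argument as a ``Fact'', with the preceding text setting up the Galois-descent definitions and pointing the reader toward \cite{BT1, BT2, MP1, MP2, KP-BTbook} for the theory; the reader is expected to accept these properties from the literature. So there is no proof in the paper to compare yours against in a line-by-line way; your proposal is supplying an argument the paper deliberately omits.

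That said, your strategy is the standard one and essentially correct: reduce to the split case over $E$ (Fact \ref{Fact-BT-triple-properties}), split $E/F$ through the maximal unramified subextension $E^{\mathrm{ur}}$, exploit that $\Gal(E/E^{\mathrm{ur}})$ has order prime to $p$ so that taking invariants of the pro-$p$ Moy--Prasad quotients is exact, and handle the unramified layer by a Lang-type argument; for (b) defer to the Bruhat--Tits integral models in \cite{BT2, KP-BTbook}, which is exactly what the literature the paper points to does. Two points you should flag more explicitly if you were to expand this. First, in (a) and throughout, you use tacitly that $G(F)_{x,r}\subset G(F)_{x,0}$ for $r>0$, which because of the extra intersection with $G(F)^0$ in the definition of the parahoric requires knowing that the pro-unipotent radical $G(E)_{x,0+}^{\Gal(E/F)}$ lies in $G(F)^0$; this is true (it is pro-$p$ and $G(F)/G(F)^0$ is an extension of a finite group by a free abelian group with torsion prime to $p$) but is an independent input. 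Second, in (c) and (e) you pass from invariants of a quotient to a quotient of invariants; this needs vanishing of $H^1(\Gal(E/F), G(E)_{x,r+})$ (resp.\ of the Lie algebra analogue), which for the totally ramified layer follows from the prime-to-$p$ order acting on a pro-$p$ group, and for the unramified layer from Lang's theorem applied to the successive graded pieces. These are exactly the places where tameness is used, so your sketch has identified the right pressure points even if it elides the cohomological bookkeeping.
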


\begin{Def}
	For a maximal split torus $S$ of $G$, we choose a maximal torus $T\subset G$ containing $S$ and call the subset $\sA(S,F):=\sA(T,E)^{\Gal(E/F)}$ of $\sB(G,F)$, sometimes also denoted by $\sA(T,F)$, the \textit{apartment} of $S$ (or of $T$).  \index{definition}{apartment}
\end{Def}
Note that $\sA(S,F)=\sA(T,E)^{\Gal(E/F)}$ is independent on the choice of maximal torus $T$ containing $S$, i.e., the apartment of $S$ is well defined. The apartment $\sA(S,F)$ is an affine space over the real vector space $X_*(S) \otimes \bR/X_*(Z(G))\otimes \bR$. We will equip the apartment with a polysimplicial structure analogous to the split case.

To do this, let $S$ be a maximal split torus of $G$ and $T$ a maximal torus of $G$ containing $S$ that splits over a tame extension $E$ of $F$. We denote by $\Phi(G,S)$ the relative root system, i.e., the restrictions of the (absolute) roots $\Phi(G,T) \subset X^*(T_{E})$ to $S$ that are non-trivial, or in other words the non-trivial weights of the action of $S$ on the Lie algebra of $G$. For $a \in \Phi(G,S)$, we have the root group $U_a$, which is the unique smooth closed subgroup of $G$ that is normalized by $S$ and on whose Lie algebra $S$ acts by positive integer multiples of $a$. We have $U_a(F)=(\prod_\alpha U_\alpha(E))^{\Gal(E/F)}$, where $\alpha$ runs over the roots of $\Phi(G,T)$ that restrict to a positive integer multiple of $a$. We set 
$$U_{a,x,r}:= \left(\prod_\alpha U_\alpha(E)_{x,r}\right)^{\Gal(E/F)}$$
for $r \in \bR_{\geq 0}$ using the same normalization as for the definition of $G_{x,r}$ above.

Now we can define a set of hyperplanes of the apartment $\sA(S, F)$ for $a \in \Phi(G,S)$:
$$ \Psi_a:=\left\{\text{ hyperplanes } H \subset \sA(S,F) \text{ satisfying } \begin{array}{ll}
U_{a,x,0}=U_{a,y,0} & \forall x, y \in H \\
U_{a,x,0} \neq U_{a,x,0+} & \forall x \in H 
\end{array} \right\}  . $$
We set $$ \Psi := \bigcup_{a \in \Phi(G,S)} \Psi_a $$ and use these hyperplanes to turn the apartment $\sA(S,F)$ into the geometric realization of a polysimplicial complex. This polysimplicial structure on the apartments yields a polysimplicial structure on the Bruhat--Tits building $\sB(G,F)$ and as in the split case, we have the following result.
\begin{Fact}
	The Bruhat--Tits building $\sB(G, F)$  with its apartments $\sA(S, F)$ attached to maximal split tori $S$ of $G$ is a geometric realization of a building.
\end{Fact}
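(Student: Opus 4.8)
We only indicate the strategy, since a complete treatment is part of the Bruhat--Tits descent machinery (see \cite{BT2} and \cite{KP-BTbook}). The plan is to deduce the statement from the split case by Galois descent along the tame extension $E/F$. Applied to $G_E$ over $E$, the split-case Fact gives that $\sB(G,E)$, with its apartments $\sA(T,E)$ attached to $E$-split maximal tori $T$, is the geometric realization of a building; moreover $\Gal(E/F)$ acts on $\sB(G,E)$ by polysimplicial automorphisms, because by construction $G(F)_{x,r}=G(E)_{x,r}^{\Gal(E/F)}$ and the Galois action carries the wall set $\Psi_\alpha$ to $\Psi_{\sigma(\alpha)}$ for $\sigma\in\Gal(E/F)$. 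As $\sB(G,F)=\sB(G,E)^{\Gal(E/F)}$, it remains to show that this fixed-point complex, together with the subcomplexes $\sA(S,F)=\sA(T,E)^{\Gal(E/F)}$ (for $S$ a maximal $F$-split torus and $T\supset S$ a maximal torus splitting over $E$), satisfies the four building axioms.

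The structural input that makes the descent work — and the place where tameness of $E/F$ is essential, cf.\ the Aside above — is twofold. First, $\sB(G,E)^{\Gal(E/F)}$ is non-empty, and for each maximal $F$-split torus $S$ the set $\sA(T,E)^{\Gal(E/F)}$ is an affine subspace of $\sA(T,E)$ on which the polysimplicial structure induced from $\sB(G,E)$ agrees with the one cut out by the relative walls $\Psi_a$, $a\in\Phi(G,S)$; here one uses $U_{a,x,r}=(\prod_\alpha U_\alpha(E)_{x,r})^{\Gal(E/F)}$ and the fact that a wall for $a$ is the trace on $\sA(S,F)$ of a wall for some absolute root $\alpha$ restricting to a positive integer multiple of $a$. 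Second, all maximal $F$-split tori of $G$ are $G(F)$-conjugate, and every point of $\sB(G,F)$ lies in some apartment $\sA(S,F)$. These are the relative analogues of the split-case statements.

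Granting this, the four axioms are verified as follows. Axiom (iii), that any two chambers lie in a common apartment, amounts to transitivity of $G(F)$ on pairs consisting of a chamber and an apartment containing it, and follows from the conjugacy of maximal $F$-split tori together with the affine Tits system (BN-pair) structure on $G(F)$. Axiom (ii), that each apartment is a thin chamber complex, is a statement about the affine Coxeter complex of the relative affine Weyl group acting on $\sA(S,F)$: a codimension-one facet lies on a unique wall, and reflection in that wall is the only way to cross it, so the facet is a face of exactly two chambers. Axiom (iv), the isomorphism-extension axiom, again reduces to transitivity: given apartments $\sA_1,\sA_2$ and common facets $\sF_1,\sF_2$, an element of $G(F)$ fixing $\sF_1$ and $\sF_2$ pointwise and carrying $\sA_1$ to $\sA_2$ exists because the pointwise stabilizer of a facet already acts transitively on the apartments through that facet. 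Axiom (i), thickness, is the one assertion that is not formal: for a codimension-one facet $\sF$ choose $x$ in its relative interior; by Fact \ref{Fact-MP-properties}(b) the group $\bG_x$, whose $\bF_q$-points are $G_{x,0}/G_{x,0+}$, is a connected reductive group over $\bF_q$ of relative semisimple rank one, and the chambers having $\sF$ as a face correspond to the $\bF_q$-rational minimal parabolics of $\bG_x$, a set of cardinality at least $q+1\geq 3$. It is precisely here that one needs $\bG_x$ to be connected, which is the reason the parahoric is defined via the corrected group $G(F)^0$ in the general tame case.

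The steps I expect to be the main obstacles are, first, showing that $\sB(G,E)^{\Gal(E/F)}$ really is a polysimplicial subcomplex whose apartments are the $\sA(S,F)$ with exactly the wall structure defined above — this is where tameness and the $G(F)^0$-correction genuinely enter — and, second, the thickness axiom, which uses the residue-field structure of $\bG_x$ rather than any formal descent. A self-contained alternative that bypasses the split case is to verify directly that $(G(F),(U_a)_a,(U_{a,x,r}))$ is a valued root datum in the sense of \cite[\S6]{BT1} and to invoke the general construction of a building from such a datum; that route is logically cleaner but computationally heavier.
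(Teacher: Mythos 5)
The paper records this statement as a ``Fact'' without giving a proof --- as is typical for a survey, it defers to the literature (the surrounding text and Aside point to \cite{BT1}, \cite{BT2}, and \cite{KP-BTbook}) --- so there is no in-paper argument for me to compare yours against. Your blind sketch is a sound outline of the standard tame-descent strategy and correctly isolates the two places where real content lives. The first is showing that $\sB(G,E)^{\Gal(E/F)}$ carries a polysimplicial structure whose walls in $\sA(S,F)$ are exactly the relative walls $\Psi_a$, so that each $\sA(S,F)$ is the Coxeter complex of the relative affine Weyl group; this is precisely where tameness is used, and it is a theorem, not a formality. The second is the thickness axiom, which is a residue-field computation rather than a descent statement: your observation that one needs $\bG_x$ to be \emph{connected} (hence the $G(F)^0$-correction in the definition of the parahoric in the ramified case), so that the chambers through a codimension-one facet correspond to the $\geq q+1\geq 3$ minimal $\bF_q$-parabolics of a connected quotient $\bG_x$ of semisimple $\bF_q$-rank one, is exactly the right point and is the sort of detail the paper's Fact glosses over entirely. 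The alternative route you mention --- verifying the valued root datum axioms for $(G(F),(U_a)_a,(U_{a,x,r}))$ directly --- is the original Bruhat--Tits approach and is the one developed systematically in \cite{KP-BTbook}; the descent route is preferable when, as here, a split or unramified model is already at hand. Nothing in your outline is incorrect, though each step you flag is itself a substantial theorem in the references, and a complete write-up would need to supply the affine Tits system on $G(F)$ that you invoke for axioms (iii) and (iv).
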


We record the following fact that will become useful later when constructing supercuspidal representations.
\begin{Fact}  \label{Fact-x} Let $x, y \in \sA(S,F) \subset \sB(G,F)$.
 Then the image of $G_{x,0} \cap G_{y,0}$ in $G_{y,0}/G_{y,0+}$ is a parabolic subgroup $P_{x,y}$ and the image of $G_{x,0+} \cap G_{y,0}$ in $G_{y,0}/G_{y,0+}$ is the unipotent radical of $P_{x,y}$. If $x \neq y$ and $y$ is a vertex, i.e., a polysimplex of minimal dimension, then $P_{x,y}$ is a proper parabolic subgroup.
\end{Fact}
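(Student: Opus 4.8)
The plan is to work inside the apartment $\sA(S,F)$ and to reduce everything to the Moy--Prasad filtrations of the torus and of the root groups. First I would reduce to the split case. Choose a tame Galois extension $E/F$ splitting $G$ and a maximal torus $T\supseteq S$ split over $E$. Then $G_{x,0}\cap G_{y,0}=\bigl(G(E)_{x,0}\cap G(E)_{y,0}\bigr)^{\Gal(E/F)}\cap G(F)^0$, the reductive quotient $\bG_y$ is obtained from the corresponding group over $\ov\bF_q$ by the same Galois-descent recipe that defines it (Fact \ref{Fact-MP-properties}(b)), and a subgroup of $\bG_y$ is parabolic exactly when its base change to $\ov\bF_q$ is; so it is enough to identify the image of $G(E)_{x,0}\cap G(E)_{y,0}$ in $\bG_y(\ov\bF_q)$ as a $\Gal(E/F)$-stable parabolic. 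Hence assume from now on that $G$ is split with split maximal torus $T$.

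I would then recall the structure of $\bG_y=G_{y,0}/G_{y,0+}$: the image $\ov T$ of $T(F)_0$ is a maximal torus, and for $\alpha\in\Phi(G,T)$ the image of $U_\alpha(F)_{y,0}$ is a nontrivial root group of $\bG_y$ exactly when $U_\alpha(F)_{y,0}\neq U_\alpha(F)_{y,0+}$, which by the formula $U_\alpha(F)_{x,r}=x_\alpha(\varpi^{\ceil{r-\<\alpha,x_{BT}\>}}\cO)$ happens iff $\<\alpha,y\>\in\bZ$, and then $U_\alpha(F)_{y,0}/U_\alpha(F)_{y,0+}\cong\cO/\varpi\cO$ is the $\ov\alpha$-root group $\ov U_{\ov\alpha}$. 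Put $v:=x-y\in X_*(T)\otimes\bR/X_*(Z(G))\otimes\bR$. A direct comparison of the exponents $\ceil{-\<\alpha,x\>}$ and $\ceil{-\<\alpha,y\>}$ (using $\<\alpha,y\>\in\bZ$) gives, for these $\alpha$, that $U_\alpha(F)_{y,0}\subseteq U_\alpha(F)_{x,0}$ iff $\<\alpha,v\>\ge 0$ and $U_\alpha(F)_{y,0}\subseteq U_\alpha(F)_{x,0+}$ iff $\<\alpha,v\>>0$. Now I would invoke the root-group factorization of an intersection of parahorics in a common apartment (a consequence of Bruhat--Tits theory, the pertinent concave function on $\Phi(G,T)$ being $\alpha\mapsto\max(-\<\alpha,x\>,-\<\alpha,y\>)$; see \cite{KP-BTbook}): $G_{x,0}\cap G_{y,0}=\< T(F)_0,\ U_\alpha(F)_{x,0}\cap U_\alpha(F)_{y,0}:\alpha\in\Phi(G,T)\>$, with the same statement for $G_{x,0+}\cap G_{y,0}$ after replacing $T(F)_0$ and $U_\alpha(F)_{x,0}$ by $T(F)_{0+}$ and $U_\alpha(F)_{x,0+}$.

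Putting these together, the image of $G_{x,0}\cap G_{y,0}$ in $\bG_y$ is generated by $\ov T$ and the root groups $\ov U_{\ov\alpha}$ with $\<\ov\alpha,v\>\ge 0$, and the image of $G_{x,0+}\cap G_{y,0}$ is generated by the $\ov U_{\ov\alpha}$ with $\<\ov\alpha,v\>>0$. Choosing $\lambda\in X_*(\ov T)$ in the same facet of the Weyl-chamber arrangement as $v$ (possible since rational points are dense there), so that $\<\ov\alpha,\lambda\>$ has the same sign as $\<\ov\alpha,v\>$ for every root $\ov\alpha$ of $\bG_y$, the first subgroup is precisely the parabolic $P_{x,y}:=P_{\bG_y}(\lambda)$ and the second is precisely its unipotent radical $R_u(P_{x,y})$, by the standard description of $P(\lambda)$ and its root groups. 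For the last assertion, suppose $x\neq y$, $y$ is a vertex, and $P_{x,y}=\bG_y$. Then $\<\ov\alpha,v\>\ge 0$ for every root $\ov\alpha$ of $\bG_y$, hence (applying this to $-\ov\alpha$) $\<\ov\alpha,v\>=0$ for all of them; but since $y$ is a vertex its facet is a point, so the roots of $\bG_y$ span the dual of $X_*(S)\otimes\bR/X_*(Z(G))\otimes\bR$, forcing $v=0$, i.e. $x=y$ --- a contradiction.

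The step I expect to be the main obstacle is the input used above: that $G_{x,0}\cap G_{y,0}$ and its ``$0+$'' analogue have no image in $\bG_y$ beyond what the torus and the intersected root groups produce. The intersection of two parahorics need not be a parahoric, and in the non-split case the subgroup $G(F)^0$ intervenes, so some care is needed; I would establish the factorization first over the splitting field $E$, where the concave-function formalism of Bruhat--Tits applies cleanly, and then descend. An alternative, more geometric, route for the containment ``image $\subseteq$ parabolic'' is to observe that $G_{x,0}\cap G_{y,0}$ fixes the geodesic segment $[x,y]$ pointwise, hence fixes the point of the link of $y$ cut out by the direction $v$; as that link is the spherical building of $\bG_y$ over $\bF_q$, the image lands in the stabilizer of a point, namely $P_{x,y}(\bF_q)$, and the easy reverse containment via the explicit root groups then gives equality.
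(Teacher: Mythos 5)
The paper states this as an unproved \emph{Fact} (it is part of the standard Bruhat--Tits/Moy--Prasad package, cf.\ \cite{KP-BTbook}), so there is no in-text proof to compare your argument against. On its own terms, your proof is sound and follows what I take to be the intended line of reasoning: reduce to the split case, read off containments of root-group filtrations from the ceilings in $U_\alpha(F)_{\cdot,r}=x_\alpha(\varpi^{\ceil{r-\<\alpha,\cdot\>}}\cO)$, identify the image of the intersection in $\bG_y$ with the parabolic $P(\lambda)$ for a cocharacter $\lambda$ in the facet of $v=x-y$, and deduce properness from the fact that the hyperplanes through a vertex $y$ have zero-dimensional common intersection in the reduced apartment.

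Two remarks. First, you have correctly put your finger on the one genuine technical input: that $G_{x,0}\cap G_{y,0}$ (and its $0+$ variant) is the group attached to the concave function $\alpha\mapsto\max(-\<\alpha,x\>,-\<\alpha,y\>)$, so that it admits the advertised factorization into $T(F)_0$ and root-group pieces. This is exactly the Bruhat--Tits result one must invoke; without it the inclusion ``image $\subseteq$ (group generated by intersected root groups)'' is not automatic, and you are right to flag it. Your alternative geometric route (the link of $y$ is the spherical building of $\bG_y$, and $G_{x,0}\cap G_{y,0}$ fixes the direction of the geodesic $[x,y]$, hence a point of that building, hence lies in a parabolic) gives the containment cleanly and is the proof most people would sketch by hand; the concave-function computation is what upgrades ``contained in'' to ``equal to'' and identifies the unipotent radical. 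Second, the Galois-descent reduction is where, as you say, care is needed when $E/F$ is ramified: $G(F)_{y,0}$ is $G(E)_{y,0}^{\Gal(E/F)}\cap G(F)^0$ rather than the full fixed-point group, and the passage from a $\Gal$-stable parabolic of $\bG_y^E$ to a parabolic of $\bG_y$ over $\bF_q$ uses the tameness hypothesis (it fails for wild descent, which is why the paper restricts to tamely ramified groups). As written this is a gestured-at step rather than a proved one, but it is the standard tame-descent argument and your caveat shows you know what is at stake. Overall: correct approach, correct computations, correctly identified dependencies.
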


\subsection{The enlarged Bruhat--Tits building}
In some circumstances it is more convenient to work with the enlarged Bruhat--Tits building. The \textit{enlarged Bruhat--Tits building} $\wt\sB(G,F)$ \index{notation}{BGFtilde@$\wt \sB(G,F)$} \index{definition}{enlarged Bruhat--Tits building} is defined as the product of the reduced Bruhat--Tits building $\sB(G,F)$ with $X_*(Z(G)) \otimes_{\bZ} \bR$, i.e., 
$$ \wt\sB(G,F) = \sB(G,F) \times  X_*(Z(G)) \otimes_{\bZ} \bR .$$
This means that if the center of $G$ is finite, then the reduced and the non-reduced Bruhat--Tits buildings are the same. In general, an important difference is that stabilizers in $G(F)$ of points in the enlarged Bruhat--Tits building are compact while stabilizers of points in the reduced Bruhat--Tits building contain the center of $G(F)$ and are compact-mod-center. For the enlarged building, the apartments $\wt \sA(S,F)$ correspond to maximal split tori $S$ and are affine spaces under the action of $X_*(S)\otimes_\bZ \bR$.
For a point $x \in \wt\sB(G,F)$ we denote by $[x]$ the image of $x$ in $\sB(G,F)$ (by projection to the first factor) and we define $G_{x,r}:=G_{[x],r}$ for $r \in \bR_{\geq 0}$ and $\fg_{x,r}:=\fg_{[x],r}$ and $\fg^*_{x,r}:=\fg^*_{[x],r}$ for $r \in \bR$. \index{notation}{x@$[x]$}

\subsection{The depth of a representation} \label{Section-depth}
The Moy--Prasad filtration allows us to introduce the notion of the depth of an irreducible smooth representation, initially defined by Moy and Prasad in \cites{MP1, MP2}. Our definition is slightly different but equivalent to theirs.
\begin{Def}\label{Def-depth}
	Let $(\pi, V)$ be an irreducible smooth representation of $G$. The \textit{depth} of $(\pi, V)$ is the smallest non-negative real number $r$ such that $V^{G_{x,r+}} \neq \{0\}$ for some $x \in \sB(G,F)$. \index{definition}{depth}
\end{Def}


\section{Construction of supercuspidal representations} \label{Section-construction}
\subsection{A non-exhaustive overview of some historic developments} \label{Section-history-construction}
In 1977, a Symposium in Pure Mathematics was held in Corvallis that led to famous Proceedings. One of the articles in the Proceedings was entitled ``Representations of $p$-adic groups: A survey'', written by Cartier (\cite{Cartier-Corvallis}). We quote from the introduction of this article:  
\begin{quote}
	``The main goal of this article will be the description and study of the principal series and the spherical functions. There shall be almost no mention of two important lines of research which are still actively pursued today: \\
	(a) [...] \\	
	(b) \textit{Explicit construction of absolutely cuspidal representations [nowadays usually called ``supercuspidal representations'']}. Here important progress has been made by Shintani \cite{Shintani68}, Gérardin \cite{Gerardin75} and Howe (forthcoming papers in the Pacific J. Math.). One can expect to meet here difficult and deep arithmetical questions which are barely uncovered.''
\end{quote}	
Since then, mathematicians have tried to construct the mysterious supercuspidal representations.  To mention a few, in 1979, Carayol (\cite{Carayol}) gave a construction of all supercuspidal representations of the general linear group $\GL_n(F)$ for $n$ a prime number and $F$ of characteristic zero. In 1986, Moy (\cite{Moy-exhaustion}) proved that Howe's construction (\cite{Howe}) from the 1970s exhausts all supercuspidal representations of $\GL_n(F)$ if $n$ is a positive integer coprime to $p$. In the early 1990s, Bushnell and Kutzko extended these constructions to obtain all supercuspidal representations of $\GL_n(F)$ and $\SL_n(F)$ for arbitrary $n$ (\cite{BK, BK-SLn}).  
Similar methods have been exploited by Stevens (\cite{Stevens}) around 15 years ago to construct all supercuspidal representations of classical groups for $p \neq 2$, i.e., orthogonal, symplectic and unitary groups. His work was preceded by a series of partial results by Moy (\cite{MoyU21} for $\U(2,1)$, \cite{MoyGSp4} for $\GSp_4$), Morris (\cite{Morris1} and \cite{Morris2}) and Kim (\cite{Kimreps}).
Moreover, Zink (\cite{Zink}) treated division algebras over non-archimedean local fields of characteristic zero,  Broussous (\cite{Broussous}) treated division algebras without restriction on the characteristic, and S\'echerre and Stevens (\cite{Secherre-Stevens}) completed the case of all inner forms of $\GL_n(F)$ about 15 years ago.

In order to achieve progress for arbitrary reductive groups, the work of Moy and Prasad based on the work of Bruhat and Tits, introduced in Section \ref{Section-MP-BT}, was pivotal. 
The Moy--Prasad filtration allowed Moy and Prasad introduced in \cite{MP1, MP2} to introduce the notion of \textit{depth} of a representation, see Definition \ref{Def-depth}, and gave a classification of depth-zero representations. Moy and Prasad showed, roughly speaking, that depth-zero representations  correspond to representations of finite groups of Lie type. A similar result was obtained shortly afterwards using different techniques by Morris (\cite{Morrisdepthzero}). We will discuss depth-zero representations in more detail in Section \ref{Section-depth-zero}.

In 1998, Adler (\cite{Adler})  used the Moy--Prasad filtration to suggest a construction of positive-depth supercuspidal representations for general $p$-adic groups that split over a tamely ramified extension of $F$, which was generalized by Yu (\cite{Yu}) in 2001. Since then, Yu's construction has been widely used in the representation theory of $p$-adic groups as well as for applications thereof. 
We will sketch Yu's construction in Section \ref{Section-Yus-construction}.

Kim (\cite{Kim}) achieved the subsequent breakthrough in 2007 by proving that if $F$ has characteristic zero and the prime number $p$ is ``very large'', then all supercuspidal representations arise from Yu's construction. Recently, in 2021, Fintzen (\cite{Fi-exhaustion})  has shown via very different techniques that Yu's construction provides us with all supercuspidal representations only under the minor assumption that $p$ does not divide the order of the absolute Weyl group of the (tame) $p$-adic group. In particular, the result also holds for fields $F$ of positive characteristic. 
\begin{table}[h]
	\begin{tabular}{|c|c|c|c|c|c|}
		\hline type  & 			$A_n \, (n \geq 1)$     & $B_n, C_n \, (n \geq 2)$  &  $D_n \, (n \geq 3)$              & $E_6$                 &  $E_7$                   \\ 
		\hline order &  $(n+1)!$  & $2^n \cdot n!$   &  $2^{n-1} \cdot n!$ & $2^7\cdot3^4\cdot5$  &  $2^{10}\cdot3^4\cdot5\cdot7$\\ 
		\hline 
	\end{tabular} \\
	
	\begin{tabular}{|c|c|c|c|}
		\hline type  & 		  $E_8$ & $F_4 $ & $G_2$  \\ 
		\hline order &   $2^{14}\cdot3^5\cdot5^2 \cdot 7$ &  $2^7 \cdot 3^2$ & $2^2 \cdot 3$ \\ 
		\hline 
	\end{tabular}
	
	\caption{Order of irreducible Weyl groups (\cite[VI.4.5-VI.4.13]{Bourbaki-4-6})}
	\label{table-weyl-group}
\end{table}
Based on \cite{Fi-tame-tori}, we expect this result to be essentially optimal (when considering also types for non-supercuspidal Bernstein blocks and treating all inner forms together), and it is exciting research in progress to construct the remaining supercuspidal representations for small primes. For this survey, we  will focus on the known construction of supercuspidal representations under the assumption that $p$ does not divide the order of the absolute Weyl group.

While Yu's construction attaches to a given input (spelled out in Section \ref{Section-input}) a supercuspidal representation, Hakim and Murnaghan (\cite{HakimMurnaghan}), together with a result of Kaletha (\cite{Kaletha-regular}) that removes some assumptions in the former work, answered the questions of which inputs yield the same supercuspidal representations. We will explain this result in Section \ref{Section-HM}, which thus leads to a parametrization of supercuspidal representations. However, it was recently suggested by Fintzen, Kaletha and Spice (\cite{FKS}) to twist Yu's construction by a quadratic character, i.e., a character of an appropriate compact open subgroup appearing in the construction of supercuspidal representations that takes values in $\{\pm 1\}$. While on first glance  this just looks like changing the parametrization of supercuspidal representations, the existence of the quadratic character has far-reaching consequences. For example, it allowed to calculate formulas for the Harish-Chandra character of these supercuspidal representations (\cite{FKS, Spice21v2}), to specify a candidate for the local Langlands correspondence for non-singular supercuspidal representations (\cite{Kaletha-non-singular}) and to prove that the local Langlands correspondence for regular supercuspidal representations introduced by Kaletha (\cite{Kaletha-regular}) satisfies the desired character identities (\cite{FKS}). It is also crucial for obtaining isomorphisms between Hecke algebras attached to Bernstein blocks of arbitrary depth and those of depth-zero (\cite{FOAMv1}), a topic that we will not discuss in this survey. We will introduce the quadratic character in Section \ref{Section-epsilon} and provide more details on Harish-Chandra characters in Section \ref{Section-characters}.

In 2019, Kaletha (\cite{Kaletha-regular}) observed that a large subclass of the representations constructed by Yu, which Kaletha called \textit{regular supercuspidal representations} arise from much simpler data, consisting of only an elliptic maximal torus and a character thereof satisfying certain properties and that these representation show a surprising parallel to a large class of (essentially) discrete series representations of real reductive Lie groups, as we hinted to in Section \ref{Sect-motivation-and-real} and will explain in more details in Section \ref{Section-regular-supercuspidal}.

\subsection{Generalities about the construction of supercuspidal representations} \label{Section-depth-zero}
We refer the reader to the article by Taïbi (\cite{Taibi-IHES}) that appears in the same proceedings as this article, in particular Section~3.1 and 3.2, and to \cite[Section~2.4]{Fi-CDM} for the basic notions around the smooth representations of our $p$-adic group $G(F)$ including parabolic induction and the notion of \text{supercuspidal}. (Note that we use the expression ``$p$-adic group'' to refer to $G(F)$ independent of the characteristic of the non-archimedean local field $F$.) All representations are always taken to be smooth and to have complex coefficients unless stated otherwise.  Possible references for the facts discussed below include \cite{BH-GL2, DeBacker-notes, Renard-book, Vigneras-book}.

It is a folklore conjecture that all supercuspidal irreducible representations arise via compact induction from a representation of a compact-mod-center open subgroup of $G(F)$, and all constructions mentioned above proceed in this way. In \cite[\S3.2]{Taibi-IHES} we saw different equivalent definitions of when a smooth irreducible representation is called \textit{supercuspidal}. One characterization is to ask that all the matrix coefficients are compactly supported modulo center. Using that viewpoint it is a nice exercise to deduce the following lemma.
\begin{Lemma} \label{Lemma-sc}
	Let $K$ be a compact-mod-center open subgroup of $G(F)$ and let $\rho$ be an irreducible representation of $K$. 
	If the compact induction $\cind_{K}^{G(F)} \rho$ of $\rho$ from $K$ to $G(F)$ is irreducible, then $\cind_{K}^{G(F)} \rho$ is a supercuspidal representation of $G(F)$.
\end{Lemma}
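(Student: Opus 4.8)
The plan is to use the characterization of supercuspidal representations via compactly-supported-mod-center matrix coefficients, as suggested in the text. Concretely, suppose $\pi := \cind_K^{G(F)}\rho$ is irreducible; I want to show every matrix coefficient $g \mapsto \langle \pi(g)v, \tilde v\rangle$ is compactly supported modulo the center $Z$ of $G(F)$. Since $\pi$ is irreducible and (as $\rho$ is irreducible on a compact-mod-center $K$, hence finite-dimensional) admissible, it suffices to check this for vectors $v$ in a generating set and $\tilde v$ in the smooth dual. The natural choice is to take $v = f_0$, the function in $\cind_K^{G(F)}\rho$ supported on $K$ with $f_0(k) = \rho(k)v_0$ for a fixed $v_0$ in the space of $\rho$; functions of this form (and their $G(F)$-translates) span the whole space.

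First I would set up the model: $\cind_K^{G(F)}\rho$ consists of functions $f: G(F)\to V_\rho$ with $f(kg) = \rho(k)f(g)$, compactly supported modulo $K$ on the left, with $G(F)$ acting by right translation. A matrix coefficient against a suitable dual vector then unwinds to an expression of the shape $g\mapsto \sum_{x \in K\backslash G(F)} \langle f_0(x), \widetilde{f_1}(xg)\rangle$ (a finite sum for each $g$), where $f_1$ is another such compactly-supported-mod-$K$ function. Since $f_0$ is supported on the single coset $K$ and $\widetilde{f_1}$ is supported on finitely many cosets $Kx_1,\dots,Kx_n$, the coefficient is nonzero at $g$ only if $g \in \bigcup_i K x_i$, which is compact modulo $K$, hence compact modulo $Z$ because $K$ is compact-mod-center. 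That already gives the matrix coefficients compact support modulo $Z$.

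The one genuine subtlety — and the step I expect to be the main obstacle — is bookkeeping the central character and making sure ``compactly supported modulo center'' is interpreted correctly: $\rho$ need not be trivial on $Z \cap K$, and one must check that the central character of $\cind_K^{G(F)}\rho$ is the one of $\rho|_Z$ (using that $Z \subset K$, which holds since $K$ is open and compact-mod-center and $Z$ is central), so that quotienting by $Z$ is harmless. I would also need to justify the passage from ``some generating set of vectors'' to ``all matrix coefficients'': here I invoke that $\pi$ is admissible (its restriction to a small compact open subgroup has finite multiplicities, which follows from the compact induction being from an open subgroup together with irreducibility, or directly from $\rho$ being finite-dimensional via Frobenius reciprocity), so the space of matrix coefficients is spanned by those coming from the chosen $v$'s and a basis of the contragredient, and finite sums of compactly-supported-mod-$Z$ functions are again compactly-supported-mod-$Z$.

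Finally I would assemble: matrix coefficients of $\pi$ are all compactly supported modulo $Z$, hence by the equivalent definitions of supercuspidality recorded in \cite[\S3.2]{Taibi-IHES}, $\pi = \cind_K^{G(F)}\rho$ is supercuspidal. The whole argument is soft — no Bruhat--Tits input is needed — the only care required is the center bookkeeping and the reduction to a generating set, which is why I flagged those as the delicate points rather than anything deep.
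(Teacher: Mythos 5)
Your overall strategy -- checking that matrix coefficients are compactly supported modulo center -- is exactly what the paper intends, and your computation for the ``nice'' matrix coefficients $\langle\pi(g)f_0,\tilde f_1\rangle$ with $\tilde f_1$ compactly supported is correct. The genuine gap, however, is not where you flag it. The central-character point is harmless: one does not actually need $Z\subset K$ for the support computation, only that $KZ/Z$ is compact, which is the hypothesis (and incidentally ``$K$ open and compact-mod-center'' does not by itself force $Z\subset K$; think of $K=\cO^\times$ inside $\GL_1(F)$ -- what forces $Z\subset K$ in the present situation is Schur's lemma applied to the irreducible $\pi$).

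The real gap is the passage from the specific matrix coefficients you compute to all of them. Your appeal to ``finite sums of compactly supported functions'' and ``a basis of the contragredient'' does not work as written: the smooth dual $\tilde\pi$ is infinite-dimensional, and a priori a smooth functional on $\cind_K^{G(F)}\rho$ is an element of $\Ind_K^{G(F)}\tilde\rho$ (smooth induction), which need not have compact support modulo $K$ -- and for such $\tilde f$ the matrix coefficient $g\mapsto\langle v_0,\tilde f(g^{-1})\rangle$ genuinely fails to have compact support modulo center. What saves the argument is the following chain, which you should spell out: every irreducible smooth representation of $G(F)$ is admissible (a theorem of Jacquet/Bernstein -- this does \emph{not} follow merely from $\rho$ being finite-dimensional, as the Mackey decomposition over $K'\backslash G(F)/K$ has infinitely many summands); admissibility plus irreducibility of $\pi$ implies $\tilde\pi$ is irreducible; the subspace $\cind_K^{G(F)}\tilde\rho\subset\Ind_K^{G(F)}\tilde\rho=\tilde\pi$ is a nonzero $G(F)$-subrepresentation, hence equals $\tilde\pi$. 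Only after this identification does your direct computation cover every smooth functional, and then the standard translation argument (the set of $v$ for which all matrix coefficients are compactly supported modulo $Z$ is a nonzero $G(F)$-subspace, hence all of $\pi$) finishes the proof.
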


Thus in order to construct supercuspidal representations, it suffices to construct pairs $(K, \rho)$ of compact-mod-center open subgroups and irreducible representations thereof such that $\cind_{K}^{G(F)} \rho$ is irreducible. The standard approach to show the latter is via Lemma \ref{Lemma-intertwiner} below, which we will demonstrate in examples below. In order to state the fact, we need to introduce some notation. 

Let $K$ be a compact-mod-center open subgroup of $G(F)$ that contains the center $Z(G(F))$ of $G(F)$ and let $(\rho, W)$ be a smooth representation of $K$.
\begin{Not}
	For $g \in G(F)$, we write ${^g}\rho$ for the representation of ${^g}K:=gKg^{-1}$ satisfying ${^g}\rho(h)=\rho(g^{-1}hg)$ for $h \in {^g}K$. 
	
	We say that $g$ \textit{intertwines} $(\rho, W)$ if $\Hom_{{^g}K \cap K}({^g}\rho|_{{{^g}K \cap K}}, \rho|_{{{^g}K \cap K}}) \neq \{ 0\}$. \index{definition}{intertwine}
\end{Not}

\begin{Lemma} \label{Lemma-intertwiner}
	Let $K$ be an open subgroup of $G(F)$ that contains and is compact modulo the center $Z(G(F))$ of $G(F)$. Let $(\rho, W)$ be an irreducible representation of $K$.
	Suppose $g \in G(F)$ intertwines $(\rho, W)$ if and only if $g \in K$. Then $\cind_{K}^{G(F)} \rho$ is irreducible.
\end{Lemma}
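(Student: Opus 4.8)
The plan is to prove a $p$-adic incarnation of Mackey's irreducibility criterion, in two stages: first compute $\End_{G(F)}\big(\cind_{K}^{G(F)}\rho\big)$ and show it is $\bC$, then upgrade this to irreducibility by exploiting the concrete shape of a compactly induced representation. Write $\pi=\cind_{K}^{G(F)}\rho$. At the outset I would record that $\rho$ is finite-dimensional: by Schur's lemma $Z(G(F))$ acts on $W$ through a character, and if $0\neq w\in W$ is fixed by a compact open subgroup $K'\subseteq K$, then irreducibility forces $\{\rho(k)w:k\in K\}$ to span $W$ while, up to scalars, this set depends only on the image of $k$ in the finite set $K/Z(G(F))K'$.

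\textbf{Step 1: the endomorphism algebra.} Since $K$ is open in $G(F)$, compact induction is left adjoint to restriction, so $\End_{G(F)}(\pi)=\Hom_K(\rho,\Res_K\pi)$. Mackey's restriction formula gives a $K$-isomorphism
\[
\Res_K\pi\;\cong\;\bigoplus_{g\in K\backslash G(F)/K}\cind_{K\cap {^g}K}^{K}\big({^g}\rho\big),
\]
in which each summand is finite-dimensional because $Z(G(F))\subseteq K\cap {^g}K$ and ${^g}K$ is open, so $K\cap {^g}K$ has finite index in $K$. As $\rho$ is finite-dimensional, $\Hom_K(\rho,-)$ commutes with the direct sum, and a further application of the adjunction inside each summand yields
\[
\End_{G(F)}(\pi)\;\cong\;\bigoplus_{g\in K\backslash G(F)/K}\Hom_{K\cap {^g}K}\big(\rho|_{K\cap {^g}K},\,{^g}\rho|_{K\cap {^g}K}\big).
\]
Conjugating by $g^{-1}$ identifies the $g$-term with $\Hom_{{^{g^{-1}}}K\cap K}\big({^{g^{-1}}}\rho,\rho\big)$, which by the hypothesis is nonzero only when $g^{-1}\in K$, i.e.\ for the trivial double coset, where it equals $\End_K(\rho)=\bC$ by Schur. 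Hence $\End_{G(F)}(\pi)=\bC$.

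\textbf{Step 2: from $\End=\bC$ to irreducibility.} Let $W_K\subseteq\pi$ be the subspace of functions supported on $K$; it is a $K$-subrepresentation isomorphic to $\rho$, it generates $\pi$ over $G(F)$ (any $f\in\pi$ is supported on finitely many cosets $Kh_1,\dots,Kh_n$, and $f=\sum_i h_i^{-1}\cdot(h_i\cdot f_i)$ where $f_i$ is the restriction of $f$ to $Kh_i$ and $h_i\cdot f_i\in W_K$), and restriction of functions to $K$ is a $K$-equivariant projection $P\colon\pi\twoheadrightarrow W_K$. Given a nonzero subrepresentation $\pi_1\subseteq\pi$, the image $P(\pi_1)$ is a $K$-subrepresentation of the irreducible $W_K$; if it were $0$ then every $f\in\pi_1$ would vanish on $K$, hence—applying this to all translates $g\cdot f\in\pi_1$—vanish identically, contradicting $\pi_1\neq0$. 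So $P|_{\pi_1}\colon\pi_1\twoheadrightarrow W_K$ is surjective. Now $\Res_K\pi$ is a direct sum of finite-dimensional, hence semisimple (Maschke for the compact-mod-centre group $K$), representations of $K$, so $\Res_K\pi$ and its subrepresentation $\Res_K\pi_1$ are semisimple; therefore the surjection $\pi_1\twoheadrightarrow W_K$ splits, giving an embedding $\rho\cong W_K\hookrightarrow\Res_K\pi_1$. By the adjunction of Step 1 this is a nonzero element of $\Hom_{G(F)}(\pi,\pi_1)$, and composing with $\pi_1\hookrightarrow\pi$ produces a nonzero element of $\End_{G(F)}(\pi)=\bC$, necessarily an isomorphism; hence $\pi_1=\pi$ and $\pi$ is irreducible.

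\textbf{Main obstacle.} Step 1 is routine once one has the $\cind\!\dashv\!\Res$ adjunction and Mackey's formula. The genuine difficulty is Step 2: the category of smooth representations of $G(F)$ is far from semisimple, so $\End_{G(F)}(\pi)=\bC$ alone does not imply irreducibility, and one really must use the particular structure of $\cind_{K}^{G(F)}\rho$—the generating $K$-subrepresentation $W_K$, the projection $P$ whose kernel contains no nonzero $G(F)$-subrepresentation, and the semisimplicity of $\Res_K\pi$—to make the passage.
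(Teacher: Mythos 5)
Your proof is correct and is essentially the paper's argument: Mackey decomposition plus the $\cind\dashv\Res$ adjunction show that $\rho$ occurs with multiplicity one in $\Res_K\pi$, and then semisimplicity of the restriction to $K$ (forced by the fixed central character and compactness of $K/Z(G(F))$) produces a copy of $\rho$ inside any nonzero $G(F)$-subrepresentation. Your closing step passes through $\End_{G(F)}(\pi)=\bC$, whereas the paper invokes the uniqueness up to scalar of the $K$-embedding $W\hookrightarrow\Res_K\pi$ and the fact that the canonical copy $W_K$ generates $\pi$; these are the same statement seen through the adjunction, so the difference is purely presentational.
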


In order to prove this lemma, let us note a helpful result, the Mackey decomposition, whose proof is a nice exercise using the definition of the compact induction.

\begin{Lemma}[Mackey decomposition] \label{Lemma-Mackey}
	If $K'$ is a compact-mod-center open subgroup of $G(F)$, then the restriction of $\cind_{K}^{G(F)} \rho$ to $K'$ decomposes as a representation of $K'$ as follows
	$$(\cind_{K}^{G(F)} \rho) |_{K'}= \bigoplus_{g \in  K'\backslash G(F) /K} \cind_{{^g}K \cap K'}^{K'}{^g}\rho|_{{^g}K \cap K'} \, \, . $$	
\end{Lemma}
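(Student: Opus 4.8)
The plan is to realize $\cind_K^{G(F)}\rho$ concretely as a space of $W$-valued functions on $G(F)$ and then to split it according to the $K'$--$K$ double cosets. I would work in the model in which $\cind_K^{G(F)}\rho$ consists of the functions $f\colon G(F)\to W$ satisfying $f(xk)=\rho(k)^{-1}f(x)$ for all $x\in G(F)$ and $k\in K$, that are left-invariant under some compact open subgroup (smoothness), and whose support is contained in a finite union of cosets $gK$; the group $G(F)$ acts by left translation, $(\gamma\cdot f)(x)=f(\gamma^{-1}x)$. Since $K$ is open, $G(F)/K$ is discrete, so the support condition simply says that $f$ is supported on finitely many cosets $gK$, hence on finitely many double cosets $K'gK$.

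First I would fix a set of representatives $g$ for $K'\backslash G(F)/K$ and let $V_g\subseteq\cind_K^{G(F)}\rho$ be the subspace of functions supported on the single double coset $K'gK$. Left translation by $K'$ preserves $K'gK$, so each $V_g$ is a $K'$-subrepresentation, and since every $f$ is supported on only finitely many double cosets, writing $f$ as the finite sum of its restrictions to the various $K'gK$ exhibits $(\cind_K^{G(F)}\rho)|_{K'}=\bigoplus_g V_g$ as representations of $K'$. The one substantive point that remains is to identify $V_g$ with $\cind_{{^g}K\cap K'}^{K'}{^g}\rho|_{{^g}K\cap K'}$.

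For this I would use the evaluation map $\Phi_g\colon V_g\to\{\text{functions }K'\to W\}$ given by $\Phi_g(f)(k')=f(k'g)$. The key computation is that for $h\in{^g}K\cap K'$, writing $h=gk_0g^{-1}$ with $k_0\in K$ so that $hg=gk_0$, one gets $\Phi_g(f)(k'h)=f(k'gk_0)=\rho(k_0)^{-1}f(k'g)={^g}\rho(h)^{-1}\Phi_g(f)(k')$; this shows $\Phi_g$ lands in $\cind_{{^g}K\cap K'}^{K'}{^g}\rho|_{{^g}K\cap K'}$ with the same equivariance convention, and $K'$-equivariance of $\Phi_g$ is immediate from the definitions of the two left actions. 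Bijectivity follows from the bijection $K'/({^g}K\cap K')\to K'gK/K$, $k'({^g}K\cap K')\mapsto k'gK$: a function in $V_g$ is determined by, and may be prescribed arbitrarily on, a transversal of $K'gK/K$ subject to the right $K$-equivariance, and likewise a function in the target is determined by arbitrary values on a transversal of $K'/({^g}K\cap K')$ subject to the right $({^g}K\cap K')$-equivariance, and $\Phi_g$ intertwines these two descriptions. Finally I would check that the smoothness conditions match up; here it is worth noting that $K'gK/K\cong K'/({^g}K\cap K')$ is finite (since $K'$ is compact modulo the center, $Z(G(F))\subseteq K'\cap{^g}K$, and ${^g}K\cap K'$ is open in $K'$), so $K'gK$ is itself a finite union of $K$-cosets, the support conditions on this piece are automatic, and the induction on the right-hand side is honest $\Ind$ from a finite-index subgroup.

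I do not expect a serious obstacle: the whole argument is a matter of organizing the double-coset bookkeeping and keeping the left/right and inverse conventions consistent, the one step requiring genuine care being the equivariance identity $\Phi_g(f)(k'h)={^g}\rho(h)^{-1}\Phi_g(f)(k')$, which rests on the observation that conjugating $h\in{^g}K$ back by $g^{-1}$ returns an element of $K$.
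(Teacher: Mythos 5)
Your proof is correct and complete. The paper leaves this lemma as an exercise (``Left to the reader''), and the route you take --- realizing the compact induction as $W$-valued functions on $G(F)$ with right $K$-equivariance, cutting up by $K'$--$K$ double cosets, and matching each piece $V_g$ with $\cind_{{^g}K\cap K'}^{K'}{^g}\rho|_{{^g}K\cap K'}$ via the evaluation map $\Phi_g(f)(k')=f(k'g)$ --- is the standard argument; your verification of the equivariance identity $\Phi_g(f)(k'h)={^g}\rho(h)^{-1}\Phi_g(f)(k')$ and of bijectivity through the bijection $K'/({^g}K\cap K')\cong K'gK/K$ are both carried out correctly.

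One small inaccuracy worth flagging: when arguing that $K'/({^g}K\cap K')$ is finite, you assert $Z(G(F))\subseteq K'\cap{^g}K$. The hypotheses give $Z(G(F))\subseteq K$ (hence $Z(G(F))\subseteq{^g}K$, as the center is conjugation-invariant), but ``$K'$ compact-mod-center'' does not imply $Z(G(F))\subseteq K'$. What the argument actually needs, and what is true, is the weaker inclusion $Z(G(F))\cap K'\subseteq{^g}K\cap K'$. Combined with the compactness of $K'/(Z(G(F))\cap K')$ and the openness of ${^g}K\cap K'$ in $K'$, this still yields finiteness of the index, so the rest of your argument --- in particular that the induction on the right-hand side is an honest $\Ind$ from a finite-index subgroup --- goes through unchanged.
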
 \index{definition}{Mackey decomposition}
\begin{proof}
	Left to the reader.
\end{proof}

\begin{proof}[Proof of Lemma \ref{Lemma-intertwiner}]
First note that $(\rho, W)$ is a $K$-subrepresentation of $\left(\left(\cind_K^{G(F)}\rho\right)|_{K},\right.$ $\left.\cind_K^{G(F)} W\right)$ via the embedding
\[ 
w \ \ \mapsto \ \  f_w: g \mapsto \begin{cases}
	\rho(g)w & g \in K\\
	0 & g \notin K
\end{cases},
\]
and the image of $W$ in $\cind_K^{G(F)} W$ under this embedding generates the latter as a $G(F)$-representation. We claim that this is up to scalar the only embedding of $\rho$ into $\left(\cind_K^{G(F)}\rho\right)|_{K}$. This follows from:
\begin{eqnarray*}
\Hom_K\left(\rho, (\cind_{K}^{G(F)} \rho) |_{K}\right) 
&\simeq &  \bigoplus_{g \in K\backslash G(F) /K} \Hom_K\left(\rho, \cind_{{^g}K \cap K}^{K}{^g\rho}|_{{^g}K \cap K}\right) \\
&\simeq &  \bigoplus_{g \in K\backslash G(F) /K} \Hom_{{^g}K \cap K}\left(\rho|_{{^g}K \cap K}, {^g\rho}|_{{^g}K \cap K}\right) \\
& = &  \Hom_{K}\left(\rho|_{K \cap K}, {\rho}|_{K \cap K}\right) \simeq \bC,
\end{eqnarray*}
where the first isomorphism results from the Mackey decomposition (Lemma \ref{Lemma-Mackey}) and the second from Frobenius reciprocity as the involved compact induction agrees with the smooth induction.

Suppose now that $V \subset \cind_K^{G(F)}W$ is a non-trivial $G(F)$-subrepresentation. This implies
\begin{equation*}
	\{ 0 \}  \not\simeq   \Hom_G\left(V, \cind_{K}^{G(F)} W\right) \subset \Hom_G\left(V, \Ind_{K}^{G(F)} W\right) 
	\simeq  \Hom_K(V, W) ,
\end{equation*}
where $\Ind$ denotes the smooth induction.
Note that $Z(G(F))$ acts via the central character of $\rho$ on $\cind_K^{G(F)}W$ and hence on $V$. Thus, as a $K$-representation, $V$ is a direct sum of irreducible $K$-representations. Therefore the above observation $\Hom_K(V, W) \not \simeq \{ 0 \}$ implies that $W$ is isomorphic to a subrepresentation of $V$. By the uniqueness up to scalar of the embedding of $W$ into $\cind_K^{G(F)}W$ as $K$-representations, we deduce that $V$ contains the above image of $W$, which generates $\cind_K^{G(F)}W$ as a $G(F)$-representation. Since $V$ is a $G(F)$-representation, we obtain $V=\cind_K^{G(F)}W$.
\end{proof}

\subsection{Depth-zero supercuspidal representations} \label{Section-depth-zero}
In this section, we consider the special case of depth-zero supercuspidal representations. The following theorem is due to Moy and Prasad (\cite{MP1, MP2}) and a different proof was later given by Morris (\cite{Morrisdepthzero}).

\begin{Thm}[\cite{MP1, MP2, Morrisdepthzero}] \label{Thm-depth-zero}
	Let $x \in \sB(G,F)$ be a vertex. Let $(\rho, V_\rho)$ be an irreducible smooth representation of the stabilizer $G_x$ of $x$ that is trivial on $G_{x,0+}$ and such that $\rho|_{G_{x,0}}$ is a cuspidal representation of the reductive group $G_{x,0}/G_{x,0+}$. Then  $\cind_{G_x}^{G(F)} \rho$ is a supercuspidal irreducible representation of $G(F)$.
\end{Thm}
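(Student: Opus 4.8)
The strategy is to combine Lemma~\ref{Lemma-sc} with Lemma~\ref{Lemma-intertwiner}. First I would record the relevant properties of the stabilizer $G_x$: it is an open subgroup of $G(F)$ that contains the centre $Z(G(F))$ and is compact modulo $Z(G(F))$, and it contains the parahoric $G_{x,0}$ (indeed if $h\in G_{x,0}$ then $h$ normalizes each $G_{x,r}$ by Fact~\ref{Fact-MP-properties}(a), so $G_{h.x,r}=G_{x,r}$ for all $r$, whence $h.x=x$). Moreover $\rho$, being trivial on $G_{x,0+}$, factors through $G_x/G_{x,0+}$, whose normal subgroup $G_{x,0}/G_{x,0+}=\bG_x(\bF_q)$ carries the cuspidal representation $\ov\rho:=\rho|_{G_{x,0}}$. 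By Lemma~\ref{Lemma-sc} the theorem follows once we show that $\cind_{G_x}^{G(F)}\rho$ is irreducible, and by Lemma~\ref{Lemma-intertwiner} it suffices to prove the intertwining criterion: an element $g\in G(F)$ intertwines $(\rho,V_\rho)$ if and only if $g\in G_x$.

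The implication $g\in G_x\Rightarrow g$ intertwines $\rho$ is trivial. For the converse I would argue by contradiction. Suppose $g\notin G_x$, so that $x':=g.x\neq x$, and suppose $g$ intertwines $\rho$, i.e.\ $\Hom_{G_x\cap {}^gG_x}({}^g\rho,\rho)\neq\{0\}$. Since ${}^gG_x=G_{x'}$ and $G_{x,0}\cap G_{x',0}\subseteq G_x\cap G_{x'}$, restriction gives a nonzero $T\in\Hom_{G_{x,0}\cap G_{x',0}}({}^g\rho,\rho)$. Now ${}^g\rho$ is trivial on ${}^gG_{x,0+}=G_{x',0+}$, hence on $G_{x',0+}\cap G_{x,0}$; therefore every vector in $\Image(T)$ is fixed by $\rho(G_{x',0+}\cap G_{x,0})$, that is, $\Image(T)\subseteq V_\rho^{\,G_{x',0+}\cap G_{x,0}}$.

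To reach a contradiction I would choose a maximal split torus $S$ with $x,x'\in\sA(S,F)$ — possible by the building axioms, since any two facets of a building lie in a common apartment — and apply Fact~\ref{Fact-x} with the roles of $x$ and $y$ there played by $x'$ and $x$. Since $x$ is a vertex and $x'\neq x$, the image of $G_{x',0}\cap G_{x,0}$ in $\bG_x(\bF_q)=G_{x,0}/G_{x,0+}$ is a \emph{proper} parabolic subgroup $P$, and the image of $G_{x',0+}\cap G_{x,0}$ is its unipotent radical $U\neq 1$. Because $\rho|_{G_{x,0}}=\ov\rho$ factors through $\bG_x(\bF_q)$, we get $V_\rho^{\,G_{x',0+}\cap G_{x,0}}=V_{\ov\rho}^{\,U}$; and cuspidality of $\ov\rho$ — vanishing of its Jacquet module along every proper parabolic, equivalently (over $\bC$, where $U$-invariants and $U$-coinvariants agree) vanishing of $V_{\ov\rho}^{\,U}$ — forces $V_{\ov\rho}^{\,U}=0$. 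Hence $\Image(T)=0$, contradicting $T\neq 0$. This establishes the intertwining criterion and with it the theorem.

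The routine inputs are the topological properties of $G_x$, the compatibility of $\ov\rho$ with the Moy--Prasad quotient, and the standard fact that a cuspidal representation of a finite reductive group has no nonzero vectors fixed by the unipotent radical of a proper parabolic. The step I expect to demand the most care is the bookkeeping in the last paragraph: one must ensure that $x$ and $x'=g.x$ genuinely lie in a common apartment so that Fact~\ref{Fact-x} applies, and one must keep straight which of the intersections $G_{x,0}\cap G_{x',0}$, $G_{x',0+}\cap G_{x,0}$, etc.\ maps onto a parabolic and which onto its unipotent radical, and inside which of $\bG_x(\bF_q)$ or $\bG_{x'}(\bF_q)$ — the hypothesis that $x$ is a vertex is precisely what makes the relevant parabolic $P=P_{x',x}$ in $\bG_x$ proper, which is the crux of the argument.
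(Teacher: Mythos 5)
Your argument is correct and follows the same route as the paper's proof: reduce to the intertwining criterion via Lemmata \ref{Lemma-sc} and \ref{Lemma-intertwiner}, use Fact \ref{Fact-x} to show that the image of a putative intertwiner for $g\notin G_x$ lands in the fixed space of a nontrivial unipotent radical inside $\bG_x(\bF_q)$, and conclude by cuspidality. The only small addition over the paper's write-up is that you explicitly justify placing $x$ and $g.x$ in a common apartment before invoking Fact \ref{Fact-x}, a point the paper leaves implicit.
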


The above authors also showed that all depth-zero supercuspidal (irreducible smooth) representations are of the form as in Theorem \ref{Thm-depth-zero}.

\textbf{Proof of Theorem \ref{Thm-depth-zero}.}\\
By Lemmata \ref{Lemma-sc} and \ref{Lemma-intertwiner} it suffices to show that an element $g \in G(F)$ intertwines $(\rho, V_\rho)$ if and only if $g \in G_x$.
Since all $g \in G_x$ intertwine $(\rho, V_\rho)$, it remains to show the other direction of the implication. Hence we assume $g \in G(F)$ intertwines $(\rho, V_\rho)$,  i.e., we can choose a nontrivial element
$$ f \in \Hom_{G_x \cap gG_xg^{-1}}({^g\sigma},\sigma) \not\simeq \{0\} .$$
Since $\sigma$ is trivial when restricted to $G_{x,0+}$, the representation ${^g\sigma}$ is trivial when restricted to $gG_{x,0+}g^{-1}=G_{g.x,0+}$. Hence $G_{g.x,0+} \cap G_{x,0}$ acts trivially on the image $\Image(f)$ of $f$. If $g \notin G_x$, then $g.x \neq x$ and hence by Fact \ref{Fact-x}, the image of $G_{g.x,0+} \cap G_{x,0}$ in $G_{x,0}/G_{x,0+}$ is the unipotent radical $N$ of a proper parabolic subgroup of $G_{x,0}/G_{x,0+}$. Thus  
$$ \{0 \} \not\simeq \Image(f) \subset V_\rho^N, $$
which contradicts that $(\rho, V_\rho)$ is cuspidal. \qed

\subsection{An example of a positive-depth supercuspidal representations} \label{Section-simple-sc-example} 
From now on we fix an additive character $\varphi: F \rightarrow \bC^\times $ \index{notation}{phi@$\varphi$} (i.e., a group homomorphism from the group $F$ (equipped with addition) to the group $\bC^\times$ (equipped with multiplication)) that is nontrivial on $\cO$ and trivial on $\varpi\cO$.

We start with an example of a positive-depth supercuspidal representation that we will step-wise generalize.
 Let $G=\SL_2$. Consider the point $x_2 \in \sB(\SL_2,F)$ introduced in Example 2 on page \pageref{Explx2}, which is the unique point $x_2$ for which  
\[G_{x_2,r}=
\begin{pmatrix}
	1+\varpi^{\ceil{r}}\cO & \varpi^{\ceil{r-\frac{1}{2}}}\cO \\  \varpi^{\ceil{r+\frac{1}{2}}}\cO & 1+\varpi^{\ceil{r}}\cO
\end{pmatrix}_{\det=1} 
\quad
\text{for} 
\, \, r \in \bR_{>0} .
\]
Let $K=\{\pm \Id \}G_{x_2, \frac{1}{2}}$. We define the representation $(\rho, \bC)$, i.e., the morphism $\rho: K \ra \bC^\times$ by requiring
\[\rho (\pm \Id)=1 
\quad
\text{ and }
\quad
\rho \left(
\begin{pmatrix}
	1+\varpi a & b \\  \varpi c & 1+\varpi d
\end{pmatrix}
\right)
=
\varphi(b+c)
\]
for all $a, b, c, d \in \cO$ with $(1+\varpi a)(1 + \varpi d)-\varpi bc=1$. Note that $\rho$ is trivial on $G_{x_2, \frac{1}{2}+}$, i.e., factors through $K/G_{x_2, \frac{1}{2}+}$.
\begin{Fact}
The representation $\cind_K^{\SL_2(F)} \rho$ is a supercuspidal irreducible representation of depth $\frac{1}{2}$.
\end{Fact}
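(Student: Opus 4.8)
The plan is to apply Lemma \ref{Lemma-sc} together with Lemma \ref{Lemma-intertwiner}: since $K = \{\pm\Id\}G_{x_2,1/2}$ is a compact-mod-center (in fact compact, as $Z(\SL_2(F)) = \{\pm\Id\} \subset K$) open subgroup and $\rho$ is one-dimensional hence irreducible, it suffices to show that an element $g \in \SL_2(F)$ intertwines $(\rho, \bC)$ if and only if $g \in K$. One direction is trivial: every $g \in K$ intertwines $\rho$ since $\Hom_{{}^gK \cap K}({}^g\rho, \rho) = \Hom_K(\rho,\rho) = \bC \neq \{0\}$. For the converse, I would first record that the depth is indeed $\frac12$ (once irreducibility of $\cind_K^{\SL_2(F)}\rho$ is established): $\rho$ is nontrivial on $G_{x_2,1/2}$ but trivial on $G_{x_2,1/2+}$, so $V^{G_{x_2,1/2+}} = V \neq \{0\}$ gives depth $\leq \frac12$, while triviality on no $G_{y,r+}$ with $r < \frac12$ can hold because that would force a lower depth contradicting the explicit nontriviality of the character.

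\textbf{The intertwining computation.} Suppose $g \in \SL_2(F)$ intertwines $(\rho,\bC)$, so there is a nonzero $f \in \Hom_{{}^gK \cap K}({}^g\rho|_{{}^gK\cap K}, \rho|_{{}^gK\cap K})$, i.e., ${}^g\rho$ and $\rho$ agree as characters on ${}^gK \cap K \supseteq {}^gG_{x_2,1/2} \cap G_{x_2,1/2} = G_{g.x_2,1/2} \cap G_{x_2,1/2}$. The key is to pass to the Lie-algebra/Moy--Prasad picture: using the Moy--Prasad isomorphism (Fact \ref{Fact-MP-properties}(\ref{MP-isom})) one identifies $\rho|_{G_{x_2,1/2}}$, which is trivial on $G_{x_2,1/2+}$, with an additive character of $\fg_{x_2,1/2}/\fg_{x_2,1/2+}$, and this character is represented (via $\varphi$ and the trace form) by an element $X^* \in \fg^*_{x_2,-1/2}$, or concretely by the ``dual'' element
\[
X = \begin{pmatrix} 0 & \varpi^{-1/2}\cdot 0 \\ & \end{pmatrix}
\]
--- more precisely by the element $X_0 = \begin{pmatrix} 0 & 1 \\ \varpi & 0 \end{pmatrix}$ viewed in $\fg_{x_2,-1/2}$, whose associated pairing $Y \mapsto \varphi(\val\text{-normalized trace}(X_0 Y))$ recovers $b + c$ on $G_{x_2,1/2}/G_{x_2,1/2+}$. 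Then $g$ intertwining $\rho$ forces $\Ad^*(g)X^*$ and $X^*$ to induce the same character on the relevant overlap, which by a direct matrix computation (conjugating $X_0 = \begin{pmatrix} 0 & 1 \\ \varpi & 0\end{pmatrix}$, an element that generates an anisotropic-mod-center torus, namely the norm-one torus of the ramified quadratic extension $F(\sqrt\varpi)$) pins down $g$ up to the centralizer of that torus times $K$; one checks the centralizer of $X_0$ in $\SL_2(F)$ is the norm-one torus $S(F)$, whose maximal compact subgroup lies in $G_{x_2,0}$, and combining this with the constraint at depth $\frac12$ yields $g \in \{\pm\Id\}S(F)_0 G_{x_2,1/2} \subseteq K$ after verifying $S(F)_0 G_{x_2,1/2}$ normalizes $\rho$ and no larger element does.

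\textbf{Main obstacle.} The delicate point is the ``no larger element'' part: one must show that if $g \notin K$ then $\Ad^*(g)X^*$ and $X^*$ genuinely disagree on ${}^gG_{x_2,1/2}\cap G_{x_2,1/2}$, not merely on all of $G_{x_2,1/2}$. Here the geometry of the building enters via Fact \ref{Fact-x}: if $g.x_2 \neq x_2$, then the images of $G_{g.x_2,1/2}\cap G_{x_2,0}$-type groups in the reductive quotient are controlled by parabolics, and the character $\rho$ (equivalently $X_0$, which is a ``regular semisimple, minimal depth'' element) is non-degenerate in the sense that its restriction to any such proper subgroup still detects $g \notin K$; the case $g.x_2 = x_2$ but $g \notin K$ is handled by working inside $G_{x_2,0}$ and using that the reduction of $X_0$ generates a non-split torus in the reductive quotient $\mathbb{G}_{x_2}$ over $\bF_q$ (which for this $x_2$ is $\mathrm{SL}_1$ of a quaternion algebra, or rather the reductive quotient is $\mathbb{G}_m$ twisted — one computes $G_{x_2,0}/G_{x_2,0+}$ explicitly), so its stabilizer for the conjugation action is a maximal torus, forcing $g$ into that torus times $G_{x_2,1/2}$ and hence into $K$. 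I would organize the write-up so that this non-degeneracy is stated as the crux, proven by the explicit $2\times 2$ matrix manipulation, with the building-theoretic Fact \ref{Fact-x} invoked only to reduce to that computation.
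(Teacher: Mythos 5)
Your outline has the right skeleton and matches the general Adler--Yu strategy the paper points to: the paper itself does not prove this Fact but states it as a special case of Yu's construction (Theorem \ref{Thm-sc}, proof sketch in Section \ref{Section-Yus-construction} onward) and of Gross--Reeder's simple supercuspidal representations, and your plan -- Lemmata \ref{Lemma-sc} and \ref{Lemma-intertwiner}, the Moy--Prasad identification of $\rho|_{G_{x_2,1/2}}$ with a character of $\fg_{x_2,1/2}/\fg_{x_2,1/2+}$ dual to a regular semisimple element generating the ramified torus $S$, and the use of Fact \ref{Fact-x} to handle $g.x_2 \neq x_2$ -- is exactly that strategy specialized to $\SL_2$. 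The issue is that the piece you explicitly flag as ``the main obstacle'' (if $g \notin K$ then $\Ad^*(g)X^*$ and $X^*$ genuinely disagree on $G_{g.x_2,1/2}\cap G_{x_2,1/2}$) is precisely the content of \cite[Theorem~9.4]{Yu}, i.e.\ Step~1 of the proof sketch the paper gives for the general theorem. You name it and say you ``would organize the write-up so that this non-degeneracy is stated as the crux,'' but you never actually carry out the $2\times 2$ computation. As written this is a program, not a proof, and it is incomplete at exactly the point where the real work lies.

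Some smaller corrections. First, $X_0=\begin{pmatrix}0&1\\\varpi&0\end{pmatrix}$ lies in $\fg_{x_2,1/2}$ (indeed it is a depth-$\tfrac12$ generator of $\fs$), not in $\fg_{x_2,-1/2}$; if you pair against the ordinary trace form, the dual element producing $Y\mapsto\varphi(b+c)$ on the Moy--Prasad quotient is $\begin{pmatrix}0&\varpi^{-1}\\1&0\end{pmatrix}\in\fg_{x_2,-1/2}$, and what you call a ``$\val$-normalized trace'' is really a multiplication by $\varpi^{-1}$ that effects this duality -- worth saying plainly. Second, the reductive quotient $\bG_{x_2}$ is just $\bG_m$ (the Iwahori quotient for $\SL_2$), not ``$\mathrm{SL}_1$ of a quaternion algebra'' or a twisted form; and the stabilizer of the image of $X^*$ under the conjugation action of $\bG_m(\bF_q)$ on the two-dimensional Moy--Prasad quotient is $\mu_2(\bF_q)=\{\pm1\}$, not a maximal torus of $\bG_{x_2}$. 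The fact that $G_{x_2,0+}=G_{x_2,1/2}$ (the first positive jump at $x_2$ is at depth $\tfrac12$) is what makes the conclusion $g\in\{\pm\Id\}G_{x_2,0+}=K$ come out; you should state that explicitly. Finally, the depth-$\tfrac12$ lower bound is asserted but not argued; one clean way is to note that if $V^{G_{y,r+}}\neq 0$ for some $y$ and $r<\tfrac12$, then by the Mackey decomposition some conjugate of $\rho$ restricted to ${}^gK\cap G_{y,r+}$ contains the trivial character, and this contradicts that $\rho$ is nontrivial on $G_{x_2,1/2}\subset G_{x_2,0+}$ after comparing filtrations -- that deduction still needs a few lines.
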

If $p\neq 2$, this is a very special case of Yu's construction as we will see below. This construction also works for $p=2$ and is an example of a \textit{simple supercuspidal representation} introduced by Gross and Reeder (\cite[\S9.3]{GrossReeder}), which in turn are special cases of \textit{epipelagic representations} as introduced by Reeder and Yu (\cite{ReederYu}), which are representations of smallest positive depth. More precisely, for $x \in \sB(G, F)$, let $r(x)$ be the smallest positive real number for which $G_{x,r(x)}\neq G_{x,r(x)+}$. Then an irreducible representation $(\pi, V)$ is called \textit{epipelagic} if there exists $x \in \sB(G,F)$ such that $V^{G_{x,r(x)+}}$ is non-trivial and $(\pi, V)$ has depth $r(x)$ (\cite[\S2.5]{ReederYu}).

\subsection{Generic characters} \label{Section-generic-characters}
In order to generalize the example of the previous subsection and to eventually present Yu's general construction of supercuspidal representations, we need the notion of twisted Levi subgroups and generic characters.

\begin{Def}
	A subgroup $G'$ of $G$ is a \textit{twisted Levi subgroup} if $G'_E$ is a Levi subgroup of $G_E$ for some finite field extension $E$ over $F$. \index{definition}{twisted Levi subgroup}
\end{Def}
If $G'$ is a twisted Levi subgroup of $G$, and we assume that $G'$ splits over a tamely ramified field extension of $F$, then we have an embedding of the enlarged Bruhat--Tits building $\wt\sB(G',F)$ of $G'$ into the enlarged Bruhat--Tits building $\wt\sB(G,F)$ of $G$. This embedding is unique up to translation by $X_*(Z(G')) \otimes_{\bZ} \bR$, and its image is independent of the embedding. We will fix such embeddings when working with twisted Levi subgroups to view $\wt\sB(G',F)$ as a subset of $\wt\sB(G,F)$.

In order to define generic characters (following \cite[\S2.1]{Fi-mod-ell}, which is based on \cite[\S9]{Yu}, but is slightly more general for small primes, see \cite[Remark~2.2]{Fi-mod-ell} for details), we first define the notion of generic elements in the dual of the Lie algebra and then use the Moy--Prasad isomorphism to obtain the notion of generic characters.

We denote by $\Phi(G, T)$\index{notation}{Phi(G,T)@$\Phi(G,T)$} the absolute root system of $G$ with respect to $T$, i.e., the roots of $G_{\bar F}$ with respect to $T_{\bar F}$, where $\bar F$ denotes a separable closure of $F$. We also extend the valuation $\val$ on $F$ to a valuation $\val: \bar F \rightarrow \bQ \cup \{ \infty \}$\index{notation}{val} on $\bar F$ and denote by $\cO_{\bar F}$ all the elements of $\bar F$ with non-negative or infinite valuation.

Let $G' \subseteq G$ be a twisted Levi subgroup that splits over a tamely ramified field extension of $F$, and denote by $(\Lie^*(G'))^{G'}$ the subscheme of the linear dual of the Lie algebra $\Lie(G')$ of $G'$ fixed by (the dual of) the adjoint action of $G'$. 
\begin{Def}  Let $x \in \wt \sB(G',F)$ and $r \in \bR_{> 0}$.
	\begin{enumerate}[(a)]
		\item 	An element $X$ of $(\Lie^*(G'))^{G'}(F) \subset \Lie^*(G')(F)$ is called \textit{$G$-generic of depth $r$} (or \textit{$(G, G')$-generic of depth $r$}) if the following three conditions hold. 
		\begin{itemize}
			\item[\textbf{(GE0)}] For some (equivalently, every) point $x \in \wt\sB(G', F)$, we have $X \in \Lie^*(G')_{x,r} \smallsetminus \Lie^*(G')_{x,r+}$.
			\item[\textbf{(GE1)}] $\val(X(H_\alpha))=r $ for all $\alpha \in \Phi(G, T) \smallsetminus \Phi(G',T)$ for some (equivalently, every) maximal torus $T$ of $G'$, where $H_\alpha:=d\check\alpha(1) \in \fg(\bar F)$ with $d\check\alpha$ the derivative of the coroot $\check \alpha \in X_*(T_{\bar F})$ of $\alpha$. 
			\item[\textbf{(GE2)}] \textbf{GE2} of \cite[\S8]{Yu} holds, which we recall below and which is implied by (GE1) if $p$ does not divide the order of the absolute Weyl group of $G$. 
		\end{itemize}
		\item 	A character $\phi$ of $G'(F)$ is called \textit{$G$-generic (or $(G, G')$-generic) relative to $x$ of depth $r$} if $\phi$ is trivial on $G'_{x,r+}$ and the restriction of $\phi$ to $G'_{x,r}/G'_{x,r+}\simeq \fg'_{x,r}/\fg'_{x,r+}$ is given by $\varphi \circ X$ for some element $X \in (\Lie^*(G'))^{G'}(F)$ that is $(G, G')$-generic of depth $-r$. \index{definition}{generic character} \index{definition}{genericG@$G$-generic} \index{definition}{genericGG'@$(G, G')$-generic}
	\end{enumerate}
\end{Def}
The equivalence in (GE0) is proven in \cite[Lemma~2.3]{Fi-mod-ell}.

In order to explain Condition (GE2), let $X \in (\Lie^*(G'))^{G'}(F)\subset \Lie^*(G')(F)$ satisfy (GE0) and (GE1) for a maximal torus $T$ of $G'$. We denote by $X_\ft$ the restriction of $X$ to $\ft(\bar F)$ and choose an element $\varpi_r$ of valuation $r$ in $\bar F$. Then, under the identification of $\ft^*(\bar F)$ with $X^*(T_{\bar F}) \otimes_\bZ \bar F$, the element $\frac{1}{\varpi_r} X_\ft$ is contained in  $X^*(T_{\bar F}) \otimes_\bZ \cO_{\bar F}$, and we denote its image under the surjection $X^*(T_{\bar F}) \otimes_\bZ \cO_{\bar F} \twoheadrightarrow X^*(T_{\bar F}) \otimes_\bZ \bar \bF_q$ by $\bar X_\ft$. Now we can state Condition (GE2):
\begin{itemize}
	\item[\textbf{(GE2)}] The subgroup of the absolute Weyl group of $G$ that fixes $\bar X_\ft$ is the absolute Weyl group of $G'$.
\end{itemize}

\begin{Rem}
	By \cite[Lemma~8.1]{Yu}, Condition \textbf{(GE1)} implies \textbf{(GE2)} if $p$ is not a torsion prime for the dual root datum of $G$, i.e., in particular, if $p$ does not divide the order of the absolute Weyl group of $G$. 
\end{Rem}

\begin{Rem}
	It is work in progress 
	to construct supercuspidal representations for a more general notion of ``generic'' that does not require (GE2) to be satisfied (and only requires a weaker version of (GE1)).	
\end{Rem}

\begin{Rem}
	If a character $\phi$ of $G'(F)$ is $(G, G')$-generic relative to $x$ of depth $r$, then it is also $(G, G')$-generic relative to $x'$ of depth $r$ for every $x' \in \wt \sB(G',F)$ , i.e., the notion of genericity does not depend on the choice of point $x$ (\cite[Lemma~3.3.1]{FOAMv1}). 
\end{Rem}

\begin{Rem}
	We caution the reader that an element in $\Lie^*(G')(F)$ that is $G$-generic of depth $r$ is sometimes called ``$G$-generic of depth $-r$'' in the literature (e.g., in \cite[\S8]{Yu} and \cite[\S2.1]{Fi-mod-ell}). However, such an element has depth $r$, in the sense of it being contained in $\Lie^*(G')_{x,r} \smallsetminus \Lie^*(G')_{x,r+}$, and therefore the latter convention has led to some confusion in the literature in the past. 
\end{Rem}

\begin{Rem}
	Usually the notion of ``$(G, G')$-generic'' is only defined for $G' \subsetneq G$. However, sometimes it is convenient to also consider the case $G'=G$, see, e.g., \cite{FOAMv1}, and in this case our definition implies that a $(G, G)$-generic character of depth $r$ has indeed depth $r$. In particular, we do not consider the trivial character a $(G, G)$-generic character of depth $r$. This differs from Yu's convention in \cite[\S~15, p.\ 616]{Yu} where he considers trivial characters as $G$-generic of depth $r$. We have chosen the above more restrictive definition of $(G, G)$-generic characters of depth $r$ as it allows to construct more uniformly representations of depth $r$ from a $(G, G')$-generic character of depth $r$ without having to treat the case $G=G'$ separately.
\end{Rem}

To provide some examples of generic characters, we consider the case that $F=\bQ_{7}$, $G=\GL_2$ and $G'$ is the diagonal torus $T \subset \GL_2$. We let $\psi: \bQ_{7}^\times \ra \bC^\times$ be a character of depth 1. 
 Then the following three characters
\[\begin{pmatrix} t_1  & 0 \\ 0 & t_2  \end{pmatrix} \mapsto \psi(t_1) \quad 
\text{and } \quad
\begin{pmatrix} t_1  & 0 \\ 0 & t_2  \end{pmatrix} \mapsto \psi(t_2) \quad
\text{ and } \quad
 \begin{pmatrix} t_1  & 0 \\ 0 & t_2  \end{pmatrix} \mapsto \psi(t_1t_2^{-1})\]
are $(G, T)$-generic of depth 1 relative to any point $x\in \wt \sB(T,\bQ_7) \subset \wt \sB(G,\bQ_7)$.
The two characters 
\[\begin{pmatrix} t_1  & 0 \\ 0 & t_2  \end{pmatrix} \mapsto \psi(t_1t_2) \quad 
\text{and } \quad
\begin{pmatrix} t_1  & 0 \\ 0 & t_2  \end{pmatrix} \mapsto \psi(t_1t_2^{-6})\]
are also of depth 1 relative to any point $x\in \wt \sB(T,\bQ_7) \subset \wt \sB(G,\bQ_7)$, but they are not $(G,T)$-generic of depth 1.

\subsection{More examples of positive-depth supercuspidal representations} \label{Section-more-positive-depth-sc} 
We will now use generic characters to provide a construction of supercuspidal representations of positive depth that generalizes the example provided in Section \ref{Section-simple-sc-example} and have arbitrary large depth. As input for the construction we take the following data
\begin{enumerate}[(a)]
	\item $S \subset G$ an elliptic maximal torus of $G$ that splits over tamely ramified extension $E$ of $F$,
	\item $x \in \wt\sB(S,F)\subset \wt\sB(G,F)$, 
	\item $r \in \bR_{>0}$ such that $G_{x,\frac{r}{2}}=G_{x,\frac{r}{2}+}$,
	\item $\phi: S(F) \ra \bC^\times$ a character that is $(G,S)$-generic relative to $x$ of depth $r$.
\end{enumerate}
The supercuspidal representation that we construct from this input is of the form $\cind_K^{G(F)} \hat \phi$ with 
	$K= S(F)G_{x,\frac{r}{2}}$ and $\hat \phi$ the extension of $\phi$ obtained by ``sending the root groups to 1''. More precisely, $\hat \phi$ is the unique character of $S(F)G_{x,\frac{r}{2}}$ that satisfies \index{notation}{phihat@$\hat \phi$}
	\begin{enumerate}[(i)]
		\item $\hat \phi|_{S(F)}=\phi$, and
		\item $\hat \phi|_{G_{x,\frac{r}{2}}}$ factors through 
		\begin{eqnarray*} 
			G_{x,\frac{r}{2}}= G_{x,\frac{r}{2}+} &\twoheadrightarrow & G_{x,\frac{r}{2}+}/G_{x,r+}  \simeq \fg_{x,\frac{r}{2}+}/\fg_{x,r+} = (\fs(F) \oplus \fr(F))_{x,\frac{r}{2}+}/(\fs(F) \oplus \fr(F))_{x,r+} \\
			& \twoheadrightarrow & \fs_{x,\frac{r}{2}+}/\fs_{x,r+} \simeq 
			S_{x,\frac{r}{2}+}/S_{x,r+},
		\end{eqnarray*}
		on which it is induced by $\phi|_{S_{x,\frac{r}{2}+}}$, where the subspace $\fr(F)$ of root subspaces is defined to be 
		$$\fr(F)= \fg(F) \cap \bigoplus_{\alpha \in \Phi(G,S)} \fg(E)_\alpha,$$ and the surjection $\fs(F) \oplus \fr(F) \twoheadrightarrow \fs(F)$ sends $\fr(F)$ to zero. The isomorphisms used are the Moy--Prasad isomorphisms from Fact \ref{Fact-MP-properties}\eqref{MP-isom}.
	\end{enumerate}
	
\begin{Fact}
	The representation $\cind_{S(F)G_{x,\frac{r}{2}}}^{G(F)} \hat \phi$ is a supercuspidal irreducible representation of depth $r$.
\end{Fact}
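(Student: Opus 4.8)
The strategy is to invoke the general irreducibility criterion of Lemmas~\ref{Lemma-sc} and~\ref{Lemma-intertwiner}, so that everything reduces to computing the intertwining set of $\hat\phi$. First one records the routine structural points. Since $S$ is elliptic, all of $\wt\sB(S,F)$ maps to a single point $[x]$ of the reduced building $\sB(G,F)$, so $S(F)$ normalises $G_{x,\frac r2}=G_{[x],\frac r2}$ and $K=S(F)G_{x,\frac r2}$ is an open subgroup of $G(F)$; moreover $Z(G)\subseteq S$ and $S(F)/Z(G(F))$ is compact, so $K$ contains $Z(G(F))$ and is compact modulo it. One checks that the prescription (i)--(ii) really defines a character of $K$: the only nontrivial point is multiplicativity of the recipe on $G_{x,\frac r2}$, which amounts to showing that the $\fs$-component of $[G_{x,\frac r2},G_{x,\frac r2}]\subseteq G_{x,r}$ lands in $G_{x,r+}$, and this follows from the hypothesis $G_{x,\frac r2}=G_{x,\frac r2+}$ together with the Moy--Prasad description of the filtration. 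Being one-dimensional, $\hat\phi$ is irreducible, so by Lemmas~\ref{Lemma-sc} and~\ref{Lemma-intertwiner} it remains to show that $g\in G(F)$ intertwines $\hat\phi$ if and only if $g\in K$, which for a character means $\hat\phi={}^g\hat\phi$ on $K\cap{}^gK$. The inclusion $K\subseteq\{g:g\text{ intertwines }\hat\phi\}$ is clear, so the content is the reverse inclusion.

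To prove the reverse inclusion, suppose $g$ intertwines $\hat\phi$, so that $\hat\phi$ and ${}^g\hat\phi$ agree on $K\cap{}^gK\supseteq G_{x,\frac r2}\cap G_{g.x,\frac r2}$. The plan is to translate this congruence into a statement about the $(G,S)$-generic element $X\in\fs^*(F)$ of depth $-r$ underlying $\phi$: using the Moy--Prasad isomorphism (Fact~\ref{Fact-MP-properties}), the restriction of $\hat\phi$ to $G_{x,r}/G_{x,r+}$ equals $\varphi\circ X$, where $X$ is viewed on $\fg_{x,r}/\fg_{x,r+}$ by killing the root part $\fr$. One then analyses the intertwining identity one filtration layer at a time, the crucial layers being the abelian quotients $G_{x,r}\cap G_{g.x,r}$ and, one step down, $G_{x,\frac r2}/G_{x,r+}$, whose commutator pairing is controlled by $X$. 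This forces $\Ad^*(g)X$ and $X$ to be congruent modulo a suitable Moy--Prasad lattice; conditions \textbf{(GE1)} (non-vanishing of $X$ on every coroot direction outside $S$) and \textbf{(GE2)} (the only element of the absolute Weyl group of $G$ fixing $\bar X_\fs$ is trivial, since the Weyl group of the torus $S$ is trivial) then force $g$ to normalise $S$ modulo $G_{x,\frac r2}$, and in fact to lie in $S(F)G_{x,\frac r2}=K$. This genericity-driven computation of the intertwining set is the heart of the argument; it is carried out in \cite{Adler} and \cite[\S14]{Yu} (see also \cite{HakimMurnaghan}), and I expect it to be the main obstacle.

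It remains to see that $\cind_{K}^{G(F)}\hat\phi$ has depth $r$. The function on $G(F)$ supported on $K$ and equal to $\hat\phi$ there is fixed by $G_{x,r+}$, because $\hat\phi$ is trivial on $G_{x,r+}$; hence the depth is at most $r$. Conversely, if $\cind_{K}^{G(F)}\hat\phi$ had a nonzero $G_{y,s+}$-fixed vector for some $y\in\sB(G,F)$ and some $s<r$, then by the Mackey decomposition (Lemma~\ref{Lemma-Mackey}) and Frobenius reciprocity ${}^g\hat\phi$ would be trivial on ${}^gK\cap G_{y,s+}$ for some $g\in G(F)$, i.e.\ $\hat\phi$ would be trivial on $K\cap G_{z,s+}$ with $z=g^{-1}.y$; but $\hat\phi|_{G_{x,r}}=\varphi\circ X$ with $X$ generic of depth $-r$, and the same kind of estimate that appears in the intertwining computation --- together with the building combinatorics underlying Fact~\ref{Fact-x} --- shows that $\hat\phi$ cannot be trivial on $G_{x,\frac r2}\cap G_{z,s+}\subseteq K\cap G_{z,s+}$ when $s<r$ (using again $G_{x,\frac r2}=G_{x,\frac r2+}$). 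Hence the depth is exactly $r$, and $\cind_{K}^{G(F)}\hat\phi$ is an irreducible supercuspidal representation of depth $r$.
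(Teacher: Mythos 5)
Your proposal follows the same route the paper indicates: reduce irreducibility and supercuspidality to the intertwining criterion (Lemmas~\ref{Lemma-sc} and~\ref{Lemma-intertwiner}) and control the intertwining set via the genericity conditions, citing Adler and Yu for the heart of the computation --- exactly what the paper does, since it establishes this Fact simply by referring to those two works as a special case of Yu's construction, whose general proof strategy is the one sketched in the subsection on why Yu's representations are supercuspidal. Your extra sketches of the structural verifications and of the depth computation (the latter not addressed in the paper) are plausible and consistent with that framework, though the depth-$\geq r$ step is left at roughly the same level of informality as your deferral to Adler and Yu for the intertwining estimate.
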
	
The construction of these representations is a special case of the construction of supercuspidal representations provided by Adler (\cite{Adler}) that was later generalized by Yu (\cite{Yu}). (These references impose a condition on $p$, but this is not necessary for the above special case.)

We recover the representation constructed in Section \ref{Section-simple-sc-example} under the assumption that $p \neq 2$ from the following input 
\begin{enumerate}[(a)]
	\item  $S \subset \SL_2$ is the torus that satisfies for every field extension $F'$ of $F$
	$$ S(F') =\left\{ \begin{pmatrix}
		a & b \\ \varpi b & a 
	\end{pmatrix} \in \SL_2(F') \, | \, a, b \in F' \right\}  .$$
	Then $S$ splits over the quadratic extension $F(\sqrt{\varpi})$ of $F$. 
	\item  The Bruhat--Tits building $\sB(\SL_2, F)=\wt\sB(\SL_2, F)$ is an infinite tree of valency $\abs{\bF_q}+1$ and the Bruhat--Tits building $\sB(S,F)=\wt \sB(S,F)$ of $S$ is a single point that embeds into $\sB(\SL_2, \bQ_p)$ as the barycenter $x$ of an edge, see Figure \ref{Figure-BTtree}. Hence there is a unique choice for $x \in \wt\sB(S,F)\subset \wt\sB(G,F)$.
	\item We let $r= \frac{1}{2}$.
	\item We define $\phi:S(F) \ra \bC^\times$ by 
	\[
	\phi\left( \begin{pmatrix}
		a & b \\ \varpi b & a 
	\end{pmatrix}  \right)
	= \varphi(2ab).
	\] Then $\phi$ is $(\SL_2,S)$-generic of depth $\frac{1}{2}$.
\end{enumerate}
\begin{figure}[h]
	\centering
	\includegraphics[width=0.50\textwidth]{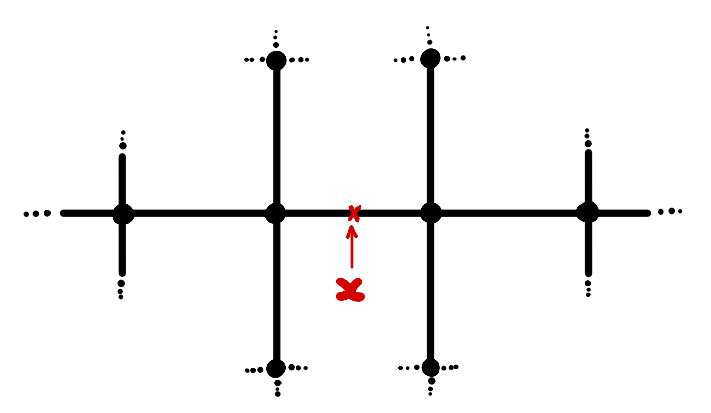}
	\caption{Excerpt of the Bruhat--Tits building $\sB(\SL_2, \bQ_3)$}
	\label{Figure-BTtree}
\end{figure}

\begin{Rem} \label{Rem-input-for-reps}
	Since $S$ is an elliptic maximal torus of $G$, the building $\wt\sB(S,F)$ is equal to $x+X_*(S)\otimes_\bZ \bR=x+X_*(Z(G))\otimes_\bZ \bR$, where we recall that $X_*(?)=\Hom_F(\bG_m, ?)$, and hence the choice of $x \in \wt\sB(S,F)$ has no influence on the construction. Moreover, the real number $r$ is just the depth of $\phi$, i.e., can be read off from $\phi$. Thus, the actual input for the above construction consists only of the pair $(S, \phi)$. 
\end{Rem}

We will now generalize this construction to allow the case $G_{x,\frac{r}{2}} \neq G_{x,\frac{r}{2}+}$, which Yu has dealt with using the theory of Weil--Heisenberg representations and which is why he assumes $p \neq 2$, and to allow a more general sequence of twisted Levi subgroups instead of only $S \subset G$.

\subsection{The input for the construction by Yu} \label{Section-input}
We assume from now on that $p \neq 2$. For a generalization of the below construction of supercuspidal representations that also works if $p=2$ we refer the reader to \cite{Fi-Schwein-Yu2}.

The input for the construction of supercuspidal representations by Yu (following the notation of \cite{Fi-Yu-works})  is a tuple $((G_i)_{1 \leq i \leq n+1}, x, (r_i)_{1 \leq i \leq n}, \rho, (\phi_i)_{1 \leq i \leq n})$ for some non-negative integer $n$ where
\begin{enumerate}[(a)]
	\item $G=G_1 \supseteq G_2 \supsetneq G_3 \supsetneq \hdots \supsetneq G_{n+1}$ are twisted Levi subgroups of $G$ that split over a tamely ramified extension of $F$,
	\item  $x \in \wt\sB(G_{n+1},F)\subset \wt\sB(G,F)$, 
	\item $r_1 > r_2 > \hdots > r_n >0$ are real numbers,
	\item $\phi_i$, for $1 \leq i \leq n$, is a character (i.e., a one-dimensional representation) of $G_{i+1}(F)$ of depth $r_i$, 
	\item $\rho$ is an irreducible representation of $(G_{n+1})_{[x]}$ that is trivial on $(G_{n+1})_{x,0+}$, 
\end{enumerate}
satisfying the following conditions 
\begin{enumerate}[(i)]
	\item  $Z(G_{n+1})/Z(G)$ is anisotropic, i.e., its $F$-points are a compact group,
	\item the image $[x]$ of the point $x$ in $\sB(G_{n+1},F)$ is a vertex, i.e., a polysimplex of minimal dimension,
	\item $\phi_i$ is $(G_i, G_{i+1})$-generic relative to $x$  of depth $r_i$ for all $1 \leq i \leq n$ with $G_i \neq G_{i+1}$, 
	\item $\rho|_{(G_{n+1})_{x,0}}$ is a cuspidal representation of the reductive group $(G_{n+1})_{x,0}/(G_{n+1})_{x,0+}$.
\end{enumerate}
We will call a tuple satisfying the above conditions a \textit{Yu datum}. \index{definition}{Yu datum}

\begin{Aside}
	Our conventions for the notation (following \cite{Fi-Yu-works}) differ slightly from those in \cite{Yu}. In particular, Yu's notation for the twisted Levi sequence is $G^0 \subsetneq G^1 \subsetneq G^2 \subsetneq \hdots \subsetneq G^d$. The reader can find a translation between the two different notations in \cite[Remark~2.4]{Fi-Yu-works}.
\end{Aside}

\textbf{Example of a Yu datum.} We provide an example of a Yu datum for the group $G=\SL_2$  with $p$ an odd prime. We let $n=1$.
\begin{enumerate}[(a)]
	\item  We have $G_1=G$ and let $G_2=S$ be the non-split torus $S \subset \SL_2$ that satisfies 
	$$ S(F') =\left\{ \begin{pmatrix}
	a & b \\  \varpi b  & a 
	\end{pmatrix} \in \SL_2(F') \, | \, a, b \in F' \right\} \, \text{ for all field extensions $F'$ of $F$.}$$
	\item  The point $x$ is the unique point of $\wt\sB(S,F)\subset \wt\sB(G,F)$.
	\item We let $r_1= \frac{1}{2}$.
	\item We define $\phi_1:S(F) \ra \bC^\times$ by \(
	\phi_1\left( \begin{pmatrix}
	a & b \\  \varpi b & a 
	\end{pmatrix}  \right)
	= \varphi(2ab).
	\)
	\item $S_{[x]}=S(F)=\{\pm \Id\} \times S_{x,0+}$ and we let $\rho$ be the trivial representation on a one-dimensional vector space.	
\end{enumerate}
The supercuspidal representation constructed from this Yu datum following the recipe in the next subsection turns out to be the representation described in Section \ref{Section-simple-sc-example}.

\subsection{The construction of supercuspidal representations à la Yu} \label{Section-Yus-construction}
In this section we outline how Yu (\cite{Yu}) constructs from a Yu datum
$$((G_i)_{1 \leq i \leq n+1}, x, (r_i)_{1 \leq i \leq n}, \rho, (\phi_i)_{1 \leq i \leq n})$$
 a compact-mod-center open subgroup $\wt K$ and a representation $\wt \rho$ of $\wt K$ such that $\cind_{\wt K}^{G(F)}\wt\rho$ is an irreducible supercuspidal representation of $G(F)$.

The compact-mod-center open subgroup $\wt K$ is given by \index{notation}{Ktilde@$\wt K$}
$$ \wt K=(G_1)_{x,\frac{r_1}{2}}(G_2)_{x,\frac{r_2}{2}}\hdots (G_n)_{x,\frac{r_n}{2}}(G_{n+1})_{[x]} ,$$ 
where $(G_{n+1})_{[x]}$ \index{notation}{Gnx@$(G_{n+1})_{[x]}$} denotes the stabilizer in $G_{n+1}(F)$ of the point $[x]$ in the (reduced) Bruhat--Tits building $\sB(G_{n+1}, F)$.

The representation $\wt \rho$ \index{notation}{rho@$\wt \rho$} is a tensor product of two representations $\rho$ and $\kappa$,
$$ \wt \rho = \rho \otimes \kappa ,$$
 where $\rho$ also denotes the extension of the representation $\rho$ of  $(G_{n+1})_{[x]}$ to $\wt K$ that is trivial on $(G_1)_{x,\frac{r_1}{2}}(G_2)_{x,\frac{r_2}{2}}\hdots (G_n)_{x,\frac{r_n}{2}}$. The representation $\kappa$ is built out of the characters $\phi_1, \hdots , \phi_n$. If $n=0$, then $\kappa$ is trivial and we are in the setting of depth-zero representations. \index{notation}{kappa@$\kappa$}
 
 We will first sketch the construction of $\kappa$ in the case $n=1$, i.e., when the Yu datum is of the form $((G=G_1 \supset G_2=G_{n+1}), x, (r_1), \rho, (\phi_1))$. 
  To simplify notation, we write $r=r_1$ and $\phi=\phi_1$, and we assume $G_1 \neq G_2$. In this case $\wt K=(G_1)_{x,\frac{r}{2}}(G_2)_{[x]}$.

\textbf{Step 1 (extending the character $\phi$ as far as possible).} The first step consists of extending the character $\phi$ to a character $\hat \phi$ of $(G_1)_{x,\frac{r}{2}+}(G_2)_{[x]}$. This is done as in Section \ref{Section-more-positive-depth-sc} by sending the root groups outside $G_2$ to 1. More precisely, $\hat \phi$ is the unique character of $(G_1)_{x,\frac{r}{2}+}(G_2)_{[x]}$ that satisfies \index{notation}{phihat@$\hat \phi$}
\begin{itemize}
	\item $\hat \phi|_{(G_2)_{[x]}}=\phi$, and
	\item $\hat \phi|_{(G_1)_{x,\frac{r}{2}+}}$ factors through 
	\begin{eqnarray*} 
		(G_1)_{x,\frac{r}{2}+}/(G_1)_{x,r+} & \simeq& \fg_{x,\frac{r}{2}+}/\fg_{x,r+} = (\fg_2(F) \oplus \fr(F))_{x,\frac{r}{2}+}/(\fg_{2}(F) \oplus \fr(F))_{x,r+} \\
		& \ra & (\fg_{2})_{x,\frac{r}{2}+}/(\fg_{2})_{x,r+} \simeq 
		(G_2)_{x,\frac{r}{2}+}/(G_2)_{x,r+},
	\end{eqnarray*}
	on which it is induced by $\phi|_{(G_2)_{x,\frac{r}{2}+}}$, where we use the Moy--Prasad isomorphism, $\fr(F)$ is defined to be 
	$$\fr(F)= \fg(F) \cap \bigoplus_{\alpha \in \Phi(G_E,T_E) \smallsetminus \Phi((G_2)_E,T_E)} \fg(E)_\alpha$$
	for some maximal torus $T$ of $G_2$ that splits over a tamely ramified extension $E$ of $F$ with $x \in \wt\sA(T_E,E)$, and the surjection $\fg_2(F) \oplus \fr(F) \twoheadrightarrow \fg_2(F)$ sends $\fr(F)$ to zero. 
\end{itemize}

\textbf{Step 2 (Heisenberg representation).} As second step we extend the (one-dimensional) representation $\hat \phi|_{(G_1)_{x,\frac{r}{2}+}(G_2)_{x,\frac{r}{2}}}$ to a representation $(\omega, V_\omega)$ of $(G_1)_{x,\frac{r}{2}}$. We write $V_{\frac{r}{2}}$ for the quotient 
$$V_{\frac{r}{2}} =(G_1)_{x,\frac{r}{2}}/((G_1)_{x,\frac{r}{2}+}(G_2)_{x,\frac{r}{2}})$$
and we view  $V_{\frac{r}{2}}$ as an $\bF_p$-vector space. (It can also be viewed as an $\bF_q$-vector space, but here we only consider the underlying $\bF_p$-vector space structure.) Then one can show that the pairing 
$$ \<g, h \>:= \hat\phi(ghg^{-1}h^{-1}), \, \, g, h \in (G_1)_{x,\frac{r}{2}}  $$
defines a non-degenerate symplectic form on $V_{\frac{r}{2}}=(G_1)_{x,\frac{r}{2}}/((G_1)_{x,\frac{r}{2}+}(G_2)_{x,\frac{r}{2}})$ when we choose an identification between the $p$-th roots of unity in $\bC^\times$ and $\bF_p$. 

Now the theory of Heisenberg representations implies that there exists a unique irreducible representation $(\omega, V_\omega)$ of $(G_1)_{x,\frac{r}{2}}$ that restricted to $(G_1)_{x,\frac{r}{2}+}(G_2)_{x,\frac{r}{2}}$ acts via $\hat\phi$ (times identity), and the dimension of $V_\omega$ is $\sqrt{\#V_{\frac{r}{2}}}=p^{(\dim_{\bF_p}V_{\frac{r}{2}})/2}$.

\textbf{Step 3 (Weil representation).} The final step of the construction consists of extending the action of $(G_1)_{x,\frac{r}{2}}$ on $V_\omega$ via $\omega$ to an action of $\wt K=(G_1)_{x,\frac{r}{2}}(G_2)_{[x]}$ on $V_\omega$ by defining an action of $(G_2)_{[x]}$ on $V_\omega$ that is compatible with $\omega$. In order to obtain this action, we first observe that $(G_2)_{[x]}$ acts on $V_{\frac{r}{2}}$ via conjugation and that this action preserves the symplectic form $\< \cdot , \cdot\>$. This provides a morphism from $(G_2)_{[x]}$ to the group $\Sp(V_{\frac{r}{2}})$ of symplectic isomorphisms of $V_{\frac{r}{2}}$. Now the Weil representation is a representation of the symplectic group $\Sp(V_{\frac{r}{2}})$ on the space $V_\omega$ of the Heisenberg representation of the symplectic vector space that is compatible with the Heisenberg representation in the following sense. Using the composition of the morphism $(G_2)_{[x]} \ra \Sp(V_{\frac{r}{2}})$ with the Weil representation tensored with the character $\phi$ allows us to extend the representation $(\omega, V_\omega)$ from $(G_1)_{x,\frac{r}{2}}$  to $(G_1)_{x,\frac{r}{2}}(G_2)_{[x]}$. We denote the resulting representation of $\wt K= (G_1)_{x,\frac{r}{2}}(G_2)_{[x]}$ also by $(\omega, V_\omega)$ and set $(\kappa, V_\kappa)=(\omega, V_\omega)$.

This concludes the construction of $\kappa$ and hence $\wt \rho=\rho \otimes \kappa$ in the case of $n=1$. For a more general Yu datum  $((G_i)_{1 \leq i \leq n+1}, x, (r_i)_{1 \leq i \leq n}, \rho, (\phi_i)_{1 \leq i \leq n})$ with $n>1$ we construct from each character $\phi_i$ $(1 \leq i \leq n)$ a representation $(\omega_i, V_{\omega_i})$ analogous to the construction of $(\omega,V_\omega)$ above. Then we define $\kappa$ to be the tensor product of all those representations, i.e.
$$ (\kappa, V_\kappa)=\left(\bigotimes_{1 \leq i \leq n}\omega_i, \bigotimes_{1 \leq i \leq n} V_{\omega_i}\right) .$$
For the details we refer the reader to \cite[\S2.5]{Fi-Yu-works}, which is based on \cite{Yu}.

\begin{Thm}[\cite{Yu, Fi-Yu-works}] \label{Thm-sc}
	The representation $\cind_{\wt K}^{G(F)}\wt\rho$ is a supercuspidal smooth irreducible representation of $G(F)$.
\end{Thm}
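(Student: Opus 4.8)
The plan is to reduce, by Lemmas \ref{Lemma-sc} and \ref{Lemma-intertwiner}, to the computation of the intertwining set of $\wt\rho$ and to show that it equals $\wt K$. First I would record the inputs needed to apply those lemmas at all: that $\wt K$ is an open subgroup of $G(F)$ containing $Z(G(F))$ and compact modulo it — this follows from hypothesis (i) (anisotropy of $Z(G_{n+1})/Z(G)$), from $Z(G(F)) \subseteq Z(G_{n+1})(F) \subseteq (G_{n+1})_{[x]}$, and from each $(G_i)_{x,\frac{r_i}{2}}$ being a compact open subgroup — and that $\wt\rho = \rho \otimes \kappa$ is an \emph{irreducible} representation of $\wt K$, which is part of Yu's construction (see \cite[\S2.5]{Fi-Yu-works}): $\kappa$ restricted to the relevant Heisenberg subgroup is irreducible by Stone--von Neumann, $\rho|_{(G_{n+1})_{[x]}}$ is irreducible by hypothesis, and the two tensor factors interact only through their common central character. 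Granting these, Lemma \ref{Lemma-sc} reduces the theorem to the irreducibility of $\cind_{\wt K}^{G(F)}\wt\rho$, and Lemma \ref{Lemma-intertwiner} reduces that in turn to the statement: $g \in G(F)$ intertwines $\wt\rho$ if and only if $g \in \wt K$. The forward implication is immediate, so the content is the converse.

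To show that an intertwining element $g$ must lie in $\wt K$, I would induct on the length $n$ of the twisted Levi sequence. The base case $n = 0$ is precisely Theorem \ref{Thm-depth-zero}: here $\wt\rho = \rho$ is inflated from a cuspidal representation of $(G_{n+1})_{x,0}/(G_{n+1})_{x,0+}$ and $[x]$ is a vertex, so the argument following Theorem \ref{Thm-depth-zero} applies verbatim — if $g \notin (G_{n+1})_{[x]}$, then Fact \ref{Fact-x} exhibits the unipotent radical of a proper parabolic of the reductive quotient acting trivially on a nonzero subspace of $V_\rho$, contradicting cuspidality.

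For the inductive step I would peel off the outermost datum $\phi_1$ (of depth $r_1$) together with its Heisenberg--Weil factor $\kappa_1$. Using the Mackey decomposition (Lemma \ref{Lemma-Mackey}) to rephrase ``$g$ intertwines $\wt\rho$'' as the non-vanishing of a $\Hom$-space over $\wt K \cap {}^g\wt K$, and then examining the Moy--Prasad layers of $(G_1)_{x,\frac{r_1}{2}}$ in depths between $\frac{r_1}{2}$ and $r_1$, one shows that $g$ must be compatible with the extended character $\hat\phi_1$ on the overlap of $(G_1)_{x,\frac{r_1}{2}+}$ with its $g$-conjugate; transported through the Moy--Prasad isomorphism, this forces $g$ to centralize, modulo deeper filtration, the generic functional $X_1 \in (\Lie^*(G_2))^{G_2}(F)$ underlying $\phi_1$. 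Here the genericity conditions \textbf{(GE1)}/\textbf{(GE2)} do the essential work: they ensure that the stabilizer of (the relevant reduction of) $X_1$ in $G_1$ is exactly $G_2$, so that $g$, modulo a factor in $(G_1)_{x,\frac{r_1}{2}}$, lies in $G_2(F)$. Removing the $\kappa_1$-factor then leaves the intertwining problem for the Yu datum $((G_i)_{2 \le i \le n+1}, x, (r_i)_{2 \le i \le n}, \rho, (\phi_i)_{2 \le i \le n})$ of length $n-1$ inside $G_2(F)$, to which the inductive hypothesis gives $g \in (G_2)_{x,\frac{r_2}{2}} \cdots (G_{n+1})_{[x]}$; reassembling, $g \in \wt K$. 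The precise bookkeeping of filtration subgroups and the passage between $\wt\rho$, $\kappa$, and its Heisenberg--Weil building blocks is carried out in \cite{Yu} and, with corrected arguments, in \cite{Fi-Yu-works}.

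The hard part will be this genericity step — proving that intertwining the Heisenberg--Weil representation $\kappa_1$ attached to a $(G_1, G_2)$-generic character $\phi_1$ forces the conjugating element into $G_2(F)$ up to controllable factors. It requires combining the symplectic/Weil-representation formalism with sharp Moy--Prasad filtration estimates and with the structural content of \textbf{(GE1)}/\textbf{(GE2)}, which plays the role of knowing the centralizer of a ``good'' semisimple element. A secondary subtlety, now resolved, is that the Weil-representation gluing in Step 3 of the construction must be carried out carefully for $\kappa$ to be a genuine representation of $\wt K$; here I would rely on the corrected treatment in \cite{Fi-Yu-works} and the related analysis in \cite{FKS} rather than on the original argument.
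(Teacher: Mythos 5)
Your proposal follows essentially the same route as the paper's sketch: reduce via Lemmas \ref{Lemma-sc} and \ref{Lemma-intertwiner} to computing the intertwining set of $\wt\rho$, use the genericity of the $\phi_i$ (via \cite[Theorem~9.4]{Yu}) to push an intertwining element down to $G_{n+1}(F)$, and finish with cuspidality of $\rho$ and Fact~\ref{Fact-x} as in the depth-zero case. The only difference is organizational — you phrase the descent as an induction on $n$ with the depth-zero theorem as base case, where the paper separates a recursive ``Step~1'' from a cuspidality ``Step~2'' — but the mathematical content, including deferral of the hardest intertwining estimates to \cite{Yu} and the corrected Weil-representation treatment in \cite{Fi-Yu-works}, is the same.
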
  
We will sketch the structure of the proof in the next subsection.

\subsection{Sketch of the proof that the representations are supercuspidal}

In order to prove that $\cind_{\wt K}^{G(F)} \wt \rho$ is supercuspidal it suffices to prove that it is irreducible by Lemma \ref{Lemma-sc}. First one notes that $\wt \rho$ itself is irreducible. We assume that an element $g \in G(F)$ intertwines $\wt \rho$. Now the main task is to show that $g \in \wt K$ so that we can apply Lemma \ref{Lemma-intertwiner}. This is done in two steps.

\textbf{Step 1.} We show recursively that $g \in \wt K G_{n+1} \wt K$ using that the characters $\phi_i$ are generic.

The key part for this step is \cite[Theorem~9.4]{Yu}, which in the example of $n=1$ spelled out above implies the following lemma.
\begin{Lemma}[\cite{Yu}]
	Suppose that $g$ intertwines $\hat \phi|_{(G_1)_{x,\frac{r}{2}+}}$. Then $g\in (G_1)_{x,\frac{r}{2}}G_2(F)(G_1)_{x,\frac{r}{2}}$.
\end{Lemma}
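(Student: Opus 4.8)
The plan is to transport the intertwining condition to a statement about the coadjoint action on the dual Lie algebra via the Moy--Prasad isomorphism, and then to exploit the genericity of the element underlying $\phi$.

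First I would fix a $(G_1,G_2)$-generic element $X^{*}\in(\Lie^{*}(G_2))^{G_2}(F)$ of depth $-r$ that realizes $\phi$ in the sense of the definition in Section \ref{Section-generic-characters}, viewed as an element of $\fg^{*}(F)=\Lie^{*}(G_1)(F)$ by extending it by zero on $\fr(F)$ using the $G_2$-stable decomposition $\fg=\fg_2\oplus\fr$. Writing $J=(G_1)_{x,\frac{r}{2}+}$, the description of $\hat\phi$ in Step 1 shows that $\hat\phi|_{J}$ is trivial on $(G_1)_{x,r+}$ and that, under the Moy--Prasad isomorphism $J/(G_1)_{x,r+}\simeq\fg_{x,\frac{r}{2}+}/\fg_{x,r+}$, it is the character $Y\mapsto\varphi(X^{*}(Y))$. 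Since $J$ is abelian modulo $(G_1)_{x,r+}$, the statement that $g$ intertwines the character $\hat\phi|_{J}$ is equivalent to ${}^{g}(\hat\phi|_{J})$ and $\hat\phi|_{J}$ agreeing on ${}^{g}J\cap J=(G_1)_{g.x,\frac{r}{2}+}\cap(G_1)_{x,\frac{r}{2}+}$. Translating through Moy--Prasad (and keeping track of $(G_1)_{x,r+}$), this becomes the assertion that
\[
\bigl(\Ad^{*}(g)X^{*}-X^{*}\bigr)\bigl(\fg_{g.x,\frac{r}{2}+}\cap\fg_{x,\frac{r}{2}+}\bigr)\subseteq\varpi\cO .
\]

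The core of the argument is the dichotomy supplied by the genericity of $X^{*}$, which is exactly the content of \cite[\S8, Theorem~9.4]{Yu}: a $(G_1,G_2)$-generic element of depth $-r$ ``detects'' the embedded building of $G_2$ inside that of $G_1$. Concretely, (GE1) gives $\val(X^{*}(H_\alpha))=-r$ for every $\alpha\in\Phi(G_1,T)\smallsetminus\Phi(G_2,T)$ and any maximal torus $T$ of $G_2$, so $X^{*}$ pairs as deeply as possible with every root direction outside $G_2$, and (GE2) says the stabilizer of the reduction $\bar X_{\ft}$ in the absolute Weyl group of $G_1$ is precisely that of $G_2$. Using a Cartan-type decomposition to write $g\in(G_1)_{x,\frac{r}{2}}\,n\,(G_1)_{x,\frac{r}{2}}$ with $n$ running over suitable representatives built from $N_{G_1}(T)(F)$, I would reduce the displayed containment to a root-by-root estimate in an apartment through $x$ (and $g.x$): for each root outside $\Phi(G_2)$ the corresponding component of $\Ad^{*}(n)X^{*}-X^{*}$ is forced to be too large unless $n$ does not move $x$ in that direction, and (GE1)/(GE2) then upgrade this to $n$ centralizing $X^{*}$ to the required order, i.e.\ $n\in G_2(F)$. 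Reabsorbing the two $(G_1)_{x,\frac{r}{2}}$-factors yields $g\in(G_1)_{x,\frac{r}{2}}G_2(F)(G_1)_{x,\frac{r}{2}}$.

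I expect the main obstacle to be making this last step rigorous. Two points need care. First, ${}^{g}J$ and $J$ are centered at the distinct points $g.x$ and $x$, so one cannot naively compare $\Ad^{*}(g)X^{*}$ with $X^{*}$ at a single point; one must pass to a facet lying in an apartment containing both $x$ and $g.x$, or run the comparison directly on the intersection lattice, tracking the ceilings $\ceil{\cdot}$ in the Moy--Prasad filtration. Second, one must control the mixed contributions coming from the non-abelian structure (commutators landing in $(G_1)_{x,r+}$) and from the fact that $X^{*}$, while $G_2$-invariant on $\fg_2$, may pair nontrivially with elements of $\fg_2$ of intermediate depth; here the genericity conditions together with Fact \ref{Fact-x} on the parabolic subgroup generated by two points of an apartment are what force the clean conclusion. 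All of the underlying estimates are carried out in detail in \cite[\S\S8--9]{Yu}.
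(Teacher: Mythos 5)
The survey itself does not prove this lemma: the text immediately after the statement says ``we refer to \cite[Theorem~9.4]{Yu} for the proof,'' so there is no in-paper proof to compare against; I will instead assess your sketch against what Yu actually does.

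Your general plan --- realize $\hat\phi|_{(G_1)_{x,\frac{r}{2}+}}$ as $\varphi\circ X^{*}$ via the Moy--Prasad isomorphism, convert intertwining of this abelian character into a congruence $\Ad^{*}(g)X^{*}\equiv X^{*}$ on a suitable intersection lattice, and then use the genericity conditions (GE1)/(GE2) to push $g$ into $(G_1)_{x,\frac{r}{2}}G_2(F)(G_1)_{x,\frac{r}{2}}$ --- is the right strategy, and the caution you flag about comparing lattices based at $x$ and at $g.x$ is appropriate. However, the pivotal step you invoke, a ``Cartan-type decomposition'' writing a general $g\in G(F)$ as $g\in(G_1)_{x,\frac{r}{2}}\,n\,(G_1)_{x,\frac{r}{2}}$ with $n\in N_{G_1}(T)(F)$, is not available: decompositions of this shape hold for the full parahoric $(G_1)_{x,0}$, not for the deep filtration subgroup $(G_1)_{x,\frac{r}{2}}$, and the double cosets $(G_1)_{x,\frac{r}{2}}\,N_{G_1}(T)(F)\,(G_1)_{x,\frac{r}{2}}$ do not cover $G(F)$. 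Yu's actual argument instead uses Iwahori factorizations of the filtration subgroups with respect to parabolic subgroups determined by the generic element, proves a Lie-algebra estimate (his \S~8) that exploits (GE1)/(GE2), and then bootstraps to the group-level intertwining statement in \S~9 by an inductive reduction; the containment $g\in (G_1)_{x,\frac{r}{2}}G_2(F)(G_1)_{x,\frac{r}{2}}$ is the output of that analysis, not a decomposition one can assume at the outset. As written, your sketch therefore hides essentially all of the technical content in a step that fails; to complete it you would need to replace that step with Yu's apartment- and Iwahori-based analysis.
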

As mentioned above, this lemma crucially uses the fact that $\phi$ is $(G, G_2)$-generic relative to $x$ of depth $r$ (if $G_1 \neq G_2$) and we refer to \cite[Theorem~9.4]{Yu} for the proof.

\textbf{Step 2.} By Step 1 we may assume that $g \in G_{n+1}(F)$. Step 2 consists of showing that then $g \in (G_{n+1})_{[x]}$ using the structure of the Weil--Heisenberg representation and that $\rho|_{(G_{n+1})_{x,0}}$ is cuspidal. The spirit of this step is similar to the proof of Theorem \ref{Thm-depth-zero}, but in this more general setting it additionally requires an intricate study of the involved Weil--Heisenberg representations.

The reader interested in the full details of the proof is encouraged to read \cite[\S3]{Fi-Yu-works}, which is only about four pages long and refers to precise statements in \cite{Yu} that allow an easy backtracking within \cite{Yu} if the reader is interested in all the details that make the complete proof.

\section{Classification of supercuspidal representations} \label{Section-classification}
We keep the notation from the previous section including the assumption that $p$ is odd. 

In Section \ref{Section-Yus-construction} we outlined how to attach a supercuspidal representation to a Yu datum, which was described in Section \ref{Section-input}. In Section \ref{Subsection-exhaustion} we will see that under mild assumptions this provides us with all supercuspidal irreducible smooth representations. In order to parameterize all supercuspidal irreducible representations it therefore remains to understand which Yu data yield the same representation. This has been resolved by Hakim and Murnaghan (\cite{HakimMurnaghan}) up to a hypothesis that was removed  by Kaletha (\cite[\S~3.5]{Kaletha-regular}) and is sketched in Section \ref{Section-HM}. In Section \ref{Section-epsilon} we discuss the suggestion of Fintzen, Kaletha and Spice (\cite{FKS}) to twist the resulting parametrization of supercuspidal representations by a quadratic character and indicate some of its advantages. Section \ref{Section-regular-supercuspidal} explains how to reinterpret a large class of the supercuspidal representations that Yu constructed in terms of much simpler data consisting only of an appropriate pair of a torus $S$ and a character $\theta$, following Kaletha (\cite{Kaletha-regular}). This generalizes the representations that we constructed in Section \ref{Section-more-positive-depth-sc} and are the representations that we alluded to in Section \ref{Section-connection-real-p-adic} for which we know the Harish-Chandra character under some assumptions on $F$, see Section \ref{Section-characters}, and have a local Langlands correspondence (\cites{Kaletha-regular, FKS}).

\subsection{A parametrization of supercuspidal representations} \label{Section-HM}
 Hakim and Murnaghan define an equivalence relation on the Yu data, which they call \textit{$G(F)$-equivalence} and the key result is that two supercuspidal representations arising from Yu's construction are equivalent if and only if the input Yu data are $G(F)$-equivalent. In order to define the $G(F)$-equivalence, Hakim and Murnaghan introduced the following three transformations of Yu data.  

\begin{Def}[Elementary transformation]
	A Yu datum $\left((G_i)_{1 \leq i \leq n+1}, x', (r_i)_{1 \leq i \leq n}, \rho',\right.$ $\left.(\phi_i)_{1 \leq i \leq n}\right)$ is obtained from a Yu datum $((G_i)_{1 \leq i \leq n+1}, x, (r_i)_{1 \leq i \leq n}, \rho, (\phi_i)_{1 \leq i \leq n})$ via an \textit{elementary transformation} if $[x]=[x']$ and $\rho \simeq \rho'$. \index{definition}{elementary transformation}
\end{Def}

\begin{Def}[$G$-conjugation]
	We say that a Yu datum 
	is a obtained from the Yu datum $((G_i)_{1 \leq i \leq n+1}, x, (r_i)_{1 \leq i \leq n}, \rho, (\phi_i)_{1 \leq i \leq n})$ via \textit{$G(F)$-conjugation} if it is of the form 	$$((gG_ig^{-1})_{1 \leq i \leq n+1}, g.x, (r_i)_{1 \leq i \leq n}, {{^g}\rho}, ({{^g}\phi_i})_{1 \leq i \leq n})$$
	for some $g \in G(F)$.
\end{Def}

While the above two operations clearly yield isomorphic representations, there is a third operation on the Yu datum that does not change the isomorphism class of the resulting supercuspidal representation.

\begin{Def}[Refactorization]
	A Yu datum $((G_i)_{1 \leq i \leq n+1}, x, (r_i)_{1 \leq i \leq n}, \rho', (\phi'_i)_{1 \leq i \leq n})$ is a \textit{refactorization} \index{definition}{refactorization} of a Yu datum $((G_i)_{1 \leq i \leq n+1}, x, (r_i)_{1 \leq i \leq n}, \rho, (\phi_i)_{1 \leq i \leq n})$ if the following two conditions are satisfied.
	\begin{enumerate}[(i)]
		\item For $1 \leq i \leq n $, we have
		$$\prod_{1 \leq j \leq i} \phi_j|_{(G_{i+1})_{x,r_{i+1}+}}=\prod_{1 \leq j \leq i} \phi'_j|_{(G_{i+1})_{x,r_{i+1}+}} ,$$
		where we set $r_{n+1}=0$, and
		\item $$\rho \otimes \prod_{1 \leq j \leq n} \phi_j|_{(G_{n+1})_{[x]}}=\rho' \otimes \prod_{1 \leq j \leq n} \phi'_j|_{(G_{n+1})_{[x]}} .$$
	\end{enumerate}
\end{Def}

These three operations together allow us to define the desired equivalence relation on the Yu data.

\begin{Def}
	Two Yu data are \textit{$G(F)$-equivalent} if one can be transformed into the other via a finite sequence of refactorizations, $G(F)$-conjugations and elementary transformations.
\end{Def}

The following theorem shows that this is the equivalence relation we were looking for.

\begin{Thm}[\cites{HakimMurnaghan, Kaletha-regular}] \label{Thm-equivalent-Yu-data}
	Two Yu data $((G_i)_{1 \leq i \leq n+1}, x, (r_i)_{1 \leq i \leq n}, \rho, (\phi_i)_{1 \leq i \leq n})$ and $((G'_i)_{1 \leq i \leq n+1}, x', (r'_i)_{1 \leq i \leq n}, \rho', (\phi'_i)_{1 \leq i \leq n})$ yield isomorphic supercuspidal representations of $G(F)$ if and only if they are $G(F)$-equivalent.
\end{Thm}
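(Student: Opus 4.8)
The plan is to prove the two implications separately. The "if" direction—that $G(F)$-equivalent Yu data yield isomorphic supercuspidal representations—is the more hands-on but ultimately routine part. First I would dispose of $G(F)$-conjugation: if $g \in G(F)$, then conjugation by $g$ is an automorphism of $G(F)$ carrying $\wt K$ to $g \wt K g^{-1}$ and $\wt\rho$ to ${}^g\wt\rho$, and since $\cind$ commutes with such automorphisms, $\cind_{\wt K}^{G(F)}\wt\rho \simeq \cind_{g\wt K g^{-1}}^{G(F)}{}^g\wt\rho$; one checks that the latter is exactly the representation attached to the conjugated datum. An elementary transformation changes neither $[x]$ nor the isomorphism class of $\rho$ (only the particular realization of $x$ in the enlarged building lying over $[x]$), and since $\wt K$, $\kappa$, and $\rho$ depend on $x$ only through $[x]$ and the values $r_i$, the constructed representation is literally unchanged. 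The substantive case is refactorization: here one must show that replacing $(\rho, (\phi_i))$ by $(\rho', (\phi'_i))$ satisfying conditions (i) and (ii) produces an isomorphic $\wt\rho$. The idea is that the Heisenberg–Weil piece $\kappa_i$ attached to $\phi_i$ depends only on $\phi_i$ restricted to the relevant Moy--Prasad quotients, and the telescoping products in (i) encode exactly the ambiguity in how the "excess" of each $\phi_i$ on the smaller group can be absorbed into $\rho$ and the neighboring factors; condition (ii) pins down the total twist on $(G_{n+1})_{[x]}$. One shows $\rho \otimes \kappa \simeq \rho' \otimes \kappa'$ by comparing their restrictions to $\wt K$ factor by factor, using that characters that agree on the appropriate subquotient give the same Heisenberg representation up to the correcting character, which is absorbed by the compatibility conditions. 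This is essentially the content of \cite{HakimMurnaghan}; I would cite their computation rather than reproduce it.

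For the "only if" direction—that isomorphic representations force $G(F)$-equivalence of the data—the strategy is to extract invariants of the supercuspidal representation $\pi = \cind_{\wt K}^{G(F)}\wt\rho$ that recover the datum up to the three transformations. The first invariant is the depth of $\pi$, which equals $r_1$, and more refined "jumps" in the Moy--Prasad structure of $\pi$ at a point of the building that recover $x$ up to conjugacy and up to elementary transformation. The key technical device is that if $\pi \simeq \cind_{\wt K}^{G(F)}\wt\rho \simeq \cind_{\wt K'}^{G(F)}\wt\rho'$ for two Yu data, then $\wt\rho$ and ${}^g\wt\rho'$ must intertwine for some $g$ witnessing the isomorphism—concretely, $\Hom_{\wt K \cap {}^g\wt K'}(\wt\rho, {}^g\wt\rho') \neq 0$. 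After conjugating by $g$ (a $G(F)$-conjugation) one reduces to the case where the two data share enough structure to be compared directly, and then an inductive descent through the twisted Levi sequence $G_1 \supsetneq \dots \supsetneq G_{n+1}$, peeling off one generic character $\phi_i$ at a time, forces the twisted Levi subgroups, the depths, and the characters to match up to refactorization. The genericity hypothesis (GE1)–(GE2) is what makes each stage of the descent rigid: a generic character of $G_{i+1}(F)$ of depth $r_i$ cannot intertwine with anything coming from a strictly larger twisted Levi without the subgroups coinciding.

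The hard part will be the inductive descent in the "only if" direction. One must control intertwining of the full Heisenberg–Weil representations $\kappa$ and $\kappa'$ simultaneously across all levels, and the Weil-representation factors are only projective representations of the symplectic groups, so the bookkeeping of central characters and of which twist lands on which factor is delicate. Concretely, after matching $G_1$ and the top-level data, one has to show that an intertwiner descends to an intertwiner of the data for the next twisted Levi $G_2$, and this requires the analogue of \cite[Theorem~9.4]{Yu} together with careful tracking of how the quadratic/Heisenberg ambiguities propagate—precisely the point where \cite{HakimMurnaghan} needed an auxiliary hypothesis later removed by Kaletha in \cite[\S3.5]{Kaletha-regular} using his theory of regular and (more generally) the refined structure of the relevant Weil representations. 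I would organize the argument so that the genericity-based descent is isolated as one lemma (citable to \cite{Yu}), the Heisenberg--Weil comparison as a second lemma (citable to \cite{HakimMurnaghan}, \cite{Kaletha-regular}), and then assemble them, rather than attempting a self-contained proof.
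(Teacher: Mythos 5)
Your plan is correct in substance, but note that the paper itself provides no proof of this theorem—it simply defers with ``For a proof, see \cite[Theorem~6.6]{HakimMurnaghan} and \cite[Corollary~3.5.5.]{Kaletha-regular}.'' Your more detailed sketch of the ``if'' direction (conjugation, elementary transformation, refactorization each preserving the isomorphism class of $\cind_{\wt K}^{G(F)}\wt\rho$) and the ``only if'' direction (intertwining, inductive descent through the twisted Levi sequence driven by genericity, with the Heisenberg--Weil bookkeeping as the delicate point, resolved in full by Kaletha) is an accurate account of what those references do, and since you too ultimately cite rather than reproduce, it is effectively the same approach as the paper.
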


For a proof, see \cite[Theorem~6.6]{HakimMurnaghan} and \cite[Corollary~3.5.5.]{Kaletha-regular}.

\subsection{A twist of Yu's construction} \label{Section-epsilon}
Let  $((G_i)_{1 \leq i \leq n+1}, x, (r_i)_{1 \leq i \leq n}, \rho, (\phi_i)_{1 \leq i \leq n})$ be a Yu datum. Instead of associating to this Yu datum the representation $\cind_{\wt K}^{G(F)}\wt \rho$ constructed by Yu, a new suggestion by Fintzen, Kaletha and Spice (\cite{FKS}) consists of associating the representation $\cind_{\wt K}^{G(F)}(\eps \wt \rho)$ for an explicitly constructed character $\eps: \wt K \ra \{\pm 1\}$. We refer the reader to \cite[p.~2259]{FKS} for the definition of $\eps$ as it is rather involved. 
 There are multiple reasons for the introduction of this quadratic twist in the parametrization. For example, it restores the validity of Yu's original proof (\cite{Yu}) that $\cind_{\wt K}^{G(F)}(\eps \wt \rho)$ is a supercuspidal irreducible representation, which is not valid for the non-twisted version as it relied on a misprinted statement in \cite{Gerardin}. In particular, we restore the validity of the intertwining results \cite[Proposition~14.1 and Theorem~14.2]{Yu} for the twisted construction that form the heart of Yu's proof. Instead of stating the results in full generality, which would involve introducing additional notation, we state its implication in the setting that we already introduced above. 
\begin{Prop}[\cites{Yu, FKS}]  Let  $((G=G_1 \supsetneq G_2=G_{n+1}), x, (r_1=r), \rho, (\phi_1=\phi))$ be a Yu datum from which we construct a representation $\kappa$ of $\wt K=(G_1)_{x,\frac{r}{2}}(G_2)_{[x]}$ as in Section \ref{Section-Yus-construction}. Then for $g \in G_2(F)$, we have
	$$\dim_{\bC} \Hom_{\wt K \cap g\wt Kg^{-1}}(\eps \kappa, {^g(\eps \kappa)})=1 .$$
\end{Prop}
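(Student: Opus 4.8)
The plan is to recognise this as the $n=1$ case of the intertwining statements \cite[Proposition~14.1 and Theorem~14.2]{Yu}, corrected as in \cite{FKS} by replacing $\kappa$ by $\eps\kappa$, and to reproduce the structure of that argument. First I would fix notation: write $J=(G_1)_{x,\frac{r}{2}}$, $J_+=(G_1)_{x,\frac{r}{2}+}$ and $J^\sharp=J_+\cdot(G_2)_{x,\frac{r}{2}}$, so $\wt K=J\cdot(G_2)_{[x]}$ and $V_{\frac{r}{2}}=J/J^\sharp$ is the non-degenerate symplectic $\bF_p$-space of Section \ref{Section-Yus-construction}, with $(G_2)_{[x]}$ acting via a homomorphism $(G_2)_{[x]}\ra\Sp(V_{\frac{r}{2}})$. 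The features of $\kappa$ to be used are: $\kappa|_{J^\sharp}=\hat\phi$ (times the identity); $\kappa|_{J}=\omega$ is the associated Heisenberg representation, which by Stone--von Neumann is the unique irreducible representation of $J$ whose restriction to $J^\sharp$ is $\hat\phi$-isotypic; and $\kappa|_{(G_2)_{[x]}}$ is the pullback of the Weil representation of $\Sp(V_{\frac{r}{2}})$ twisted by $\phi$. In particular $\eps\kappa$ is irreducible. The easy case is $g\in(G_2)_{[x]}\subseteq\wt K$: then $g\wt Kg^{-1}=\wt K$ and ${}^g(\eps\kappa)\cong\eps\kappa$, so the dimension is $1$ by Schur's lemma. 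I would therefore assume $g\in G_2(F)\smallsetminus(G_2)_{[x]}$, i.e.\ $g.[x]\neq[x]$ in $\sB(G_2,F)$, from now on.

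For that case the first step is combinatorial. Since $g\in G_2(F)\subseteq G_1(F)$ acts on $\sB(G_1,F)$ compatibly with the inclusion $\sB(G_2,F)\subseteq\sB(G_1,F)$, one has $g\wt Kg^{-1}=(G_1)_{g.x,\frac{r}{2}}\cdot(G_2)_{[g.x]}$, and using the Moy--Prasad / Iwahori-type factorisations of $J$ and $gJg^{-1}$ along the twisted-Levi decomposition $\fg=\fg_2\oplus\fr$ used to define $\hat\phi$, I would compute $\wt K\cap g\wt Kg^{-1}$ in terms of $J\cap gJg^{-1}$ and $(G_2)_{[x]}\cap(G_2)_{[g.x]}$ and make the relevant subquotients explicit. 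The point that makes this work is that the extension $\hat\phi$, obtained by killing the $\fr$-part, is canonical in the point of the building and equivariant under $G_2(F)$: $\phi$ is a character of the \emph{group} $G_2(F)$, so ${}^g\phi=\phi$, and the projection $\fg_2\oplus\fr\twoheadrightarrow\fg_2$ does not depend on $x$, so $\hat\phi$ and ${}^g\hat\phi$ agree on the overlap of their domains. Hence $\kappa$ and ${}^g\kappa$, restricted to $J\cap gJg^{-1}$, are built from Heisenberg representations of $V_{\frac{r}{2}}$ and its conjugate $gJg^{-1}/gJ^\sharp g^{-1}$ agreeing on their common symplectic subquotient, and by the theory of Heisenberg--Weil representations the space $\Hom_{\wt K\cap g\wt Kg^{-1}}(\eps\kappa,{}^g(\eps\kappa))$ is identified with a multiplicity space for an $\eps$-twist of the Weil representation of a finite symplectic group, restricted along the image of $(G_2)_{[x]}\cap(G_2)_{[g.x]}$.

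It then remains to show that this finite multiplicity equals $1$; this is the technical heart of \cite[\S14]{Yu} and the step where the quadratic character $\eps$ is indispensable. Yu's original argument used a formula for Weil representations of finite symplectic groups from \cite{Gerardin} that contained a sign error, and twisting by $\eps$ is exactly what compensates for it (see \cite{FKS}); the same computation yields the lower bound, since the functoriality of the Weil representation under the symplectic morphism that $g$ induces on the common subquotient produces an explicit nonzero intertwiner. I expect the main obstacle to be precisely this last step: verifying that after the $\eps$-twist the multiplicity is neither $0$ nor larger than $1$, together with the bookkeeping of how the various Heisenberg groups attached to $x$ and to $g.x$ are identified under conjugation by $g$ and how $\eps$ interacts with those identifications. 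The remaining ingredients, namely Iwahori factorisations, the Moy--Prasad isomorphism, and the $G_2(F)$-equivariance of $\hat\phi$, are routine though lengthy. I note finally that the cuspidality of $\rho$ plays no role here: this proposition isolates the ``$\kappa$-part'' of Yu's intertwining analysis, which is one-dimensional for every $g\in G_2(F)$, whereas it is the ``$\rho$-part'' that ultimately forces $g$ into $(G_{n+1})_{[x]}$, via a depth-zero argument as in the proof of Theorem \ref{Thm-depth-zero}.
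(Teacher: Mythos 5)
The paper does not actually give a proof of this Proposition. It states it as a consequence of \cite[Proposition~14.1 and Theorem~14.2]{Yu}, corrected by the $\eps$-twist, and refers to \cite[Corollary~4.1.11 and Corollary~4.1.12]{FKS} for the detailed statements and proofs. So there is no in-paper argument to compare your sketch against line-by-line; what I can do is judge whether your sketch is a faithful outline of the strategy in those references.

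It largely is. You correctly dispose of the case $g\in(G_2)_{[x]}$ by Schur's lemma applied to the irreducible $\eps\kappa$; you correctly identify that the proposition isolates the ``$\kappa$-part'' of the intertwining analysis and that cuspidality of $\rho$ plays no role; you correctly observe that ${}^g\phi=\phi$ for $g\in G_2(F)$ (since $\bC^\times$ is abelian) and that the extension $\hat\phi$ is compatible under $G_2(F)$-conjugation because the projection $\fg_2\oplus\fr\twoheadrightarrow\fg_2$ is intrinsic; and you correctly locate the hard content in the Heisenberg--Weil computation and in \cite{FKS}'s $\eps$-correction of the sign issue inherited from \cite{Gerardin}. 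This matches the narrative the paper gives in the paragraph immediately preceding the Proposition.

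Two cautionary remarks. First, the claim that $\hat\phi$ and ${}^g\hat\phi$ agree on the overlap of their domains, while true, is not merely ``routine bookkeeping'': in Yu's development this sits alongside the special-isomorphism formalism and needs to be established with the correct normalizations (it is one of the places where, for general $g\in G(F)$, genericity enters; for $g\in G_2(F)$ it is easier but still needs writing out). Second, you should be explicit that the upper bound $\dim\leq 1$ and the lower bound $\dim\geq 1$ are established by different mechanisms -- uniqueness in Stone--von Neumann for the former, an explicit nonzero intertwiner coming from the Weil-representation functoriality under $\Sp$-morphisms for the latter -- and that it is specifically the lower bound (existence of a nonzero intertwiner) where the $\eps$-twist is indispensable, because Yu's unCorrected construction produces a map that can fail to be an intertwiner due to the sign discrepancy. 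Your sketch gestures at both but blurs them together. As a proof this would not pass without filling in the Heisenberg--Weil step, but as an account of the structure of the cited proof it is accurate.
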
	
This result also holds in a more general setting in which we drop the assumption that $Z(G_2)/Z(G)$ is anisotropic.
We refer the reader to \cite[Corollary~4.1.11 and Corollary~4.1.12]{FKS} for the detailed statements and proofs.

Applications of the existence of the above quadratic character $\eps: \wt K \ra \{\pm 1\}$ include being able (under some assumptions on $F$) to provide a character formula for the supercuspidal representations $\cind_{\wt K}^{G(F)}(\eps \wt \rho)$ (\cite{Spice18, Spice21v2, FKS}), to suggest a local Langlands correspondence for all supercuspidal Langlands parameters (\cite{Kaletha-non-singular}) and to prove the stability and many instances of the endoscopic character identities for the resulting supercuspidal L-packets that such a local Langlands correspondence is predicted to satisfy (\cite{FKS}).


\subsection{Exhaustiveness of the construction of supercuspidal representations} 
\label{Subsection-exhaustion}

\begin{Thm}[\cites{Kim, Fi-exhaustion}] \label{Thm-exhaustion}
	Suppose that $G$ splits over a tamely ramified field extension of $F$ and that $p$ does not divide the order of the absolute Weyl group of $G$. Then every supercuspidal smooth irreducible representation of $G(F)$ arises from Yu's construction, i.e., via Theorem \ref{Thm-sc}.
\end{Thm}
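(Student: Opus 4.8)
The plan is to argue by induction on the pair $(\dim G,\ \text{depth})$, ordered lexicographically; since for a fixed group the depths of irreducible smooth representations lie in a fixed discrete subset of $\bR_{\geq 0}$, each descending chain below terminates, so this induction is well founded with base case the depth-zero representations. So let $(\pi,V)$ be an irreducible supercuspidal smooth representation of $G(F)$ of depth $r$ (Definition~\ref{Def-depth}). If $r=0$, then by the Moy--Prasad classification of depth-zero representations (\cites{MP1,MP2,Morrisdepthzero}, the converse to Theorem~\ref{Thm-depth-zero} recalled above) there are a vertex $x\in\sB(G,F)$ and a representation $\rho$ of the stabilizer $G_x$, trivial on $G_{x,0+}$ and cuspidal on $G_{x,0}/G_{x,0+}$, with $\pi\simeq\cind_{G_x}^{G(F)}\rho$; this is exactly the output of Yu's construction applied to the $n=0$ Yu datum whose twisted Levi sequence is $G=G_{n+1}$ alone, so there is nothing to prove. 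Assume therefore $r>0$.

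\textbf{Step 1: a good minimal $K$-type.} By Moy--Prasad theory $\pi$ contains an unrefined minimal $K$-type of depth $r$: a point $x\in\sB(G,F)$ and a character $\chi$ of $G_{x,r}/G_{x,r+}$ occurring in $\pi|_{G_{x,r}}$, corresponding under the Moy--Prasad isomorphism (Fact~\ref{Fact-MP-properties}\eqref{MP-isom}) and the fixed additive character $\varphi$ to a coset $X+\fg^*_{x,(-r)+}$ with $X\in\fg^*_{x,-r}$. Because $\pi$ is supercuspidal this minimal $K$-type is non-degenerate, and here the hypothesis on $p$ enters for the first time: since $p$ does not divide the order of the absolute Weyl group $W$ of $G$, the theory of good semisimple elements (see \cite{Fi-exhaustion} and the references therein) allows us, after replacing $(x,\chi)$ by a $G(F)$-conjugate, to take $X$ semisimple so that $G_2:=\Cent_G(X)^\circ$ is a twisted Levi subgroup of $G$ splitting over a tamely ramified extension of $F$, with $x\in\wt\sB(G_2,F)$, with $Z(G_2)/Z(G)$ anisotropic and the image of $x$ in $\sB(G_2,F)$ a vertex, and with $X\in(\Lie^*(G_2))^{G_2}(F)$ being $(G,G_2)$-generic of depth $-r$ in the sense of Section~\ref{Section-generic-characters}---condition (GE2) being automatic because $p\nmid|W|$. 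We then pick a character $\phi_1$ of $G_2(F)$ that is $(G,G_2)$-generic relative to $x$ of depth $r_1:=r$ and restricts to $\chi$ on $(G_2)_{x,r}/(G_2)_{x,r+}$; such a $\phi_1$ exists because $X$ pins down its restriction to the relevant pro-$p$ subgroup, which then extends to a character of $G_2(F)$.

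\textbf{Step 2: descent to $G_2(F)$.} Extend $\phi_1$ to the character $\hat\phi_1$ of $(G_1)_{x,\frac{r_1}{2}+}(G_2)_{[x]}$ by sending the root groups outside $G_2$ to $1$, as in Section~\ref{Section-Yus-construction}, and form the Heisenberg--Weil representation $\omega_1$ of $\wt K_1:=(G_1)_{x,\frac{r_1}{2}}(G_2)_{[x]}$ from $\phi_1$. The genericity of $\phi_1$, through the intertwining estimate \cite[Theorem~9.4]{Yu} (the intertwining lemma recalled in Section~\ref{Section-Yus-construction}), controls which $g\in G(F)$ intertwine $\hat\phi_1|_{(G_1)_{x,\frac{r_1}{2}+}}$; combining this with the occurrence of $\chi$ in $\pi$ and a Mackey--Frobenius argument in the spirit of the proof of Theorem~\ref{Thm-depth-zero}, one produces a nonzero smooth representation $\pi_2$ of $G_2(F)$---roughly, the $\omega_1$-multiplicity space of $\pi$, corrected by the Weil twist---such that $\pi$ is recovered from $\pi_2$ by the $\phi_1$-layer of Yu's recipe. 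One then checks that $\pi_2$ is irreducible and supercuspidal---cuspidality transfers from $\pi$ to $\pi_2$ using the anisotropy of $Z(G_2)/Z(G)$ together with a comparison of matrix-coefficient supports---and that its depth is strictly smaller than $r$ (it is the depth $r_2<r_1$ of the next layer, or $0$).

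\textbf{Step 3: induction, and the main obstacle.} If $G_2\subsetneq G$ then $\dim G_2<\dim G$; moreover the absolute Weyl group of $G_2$ has order dividing $|W|$, so $p\nmid|W(G_2)|$, and $G_2$ still splits tamely, so the inductive hypothesis applies to $(G_2,\pi_2)$. If instead $G_2=G$---the case of a central $X$, i.e.\ a character twist---then $\dim G_2=\dim G$ but $\pi_2$ has strictly smaller depth, and again the inductive hypothesis applies. Either way $\pi_2$ arises from a Yu datum for $G_2$; prepending $G_1=G$, $r_1=r$ and $\phi_1$ and, if necessary, adjusting by an elementary transformation so that the base points agree, one obtains a Yu datum for $G$ (the conditions (i)--(iv) of Section~\ref{Section-input} are readily verified, using transitivity of anisotropy-mod-center and that $\phi_1$ is $(G,G_2)$-generic) whose associated supercuspidal representation---irreducible supercuspidal by Theorem~\ref{Thm-sc}, and identifiable via Theorem~\ref{Thm-equivalent-Yu-data}---is isomorphic to $\pi$. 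This completes the induction. The genuinely hard part, which is essentially the whole of \cite{Fi-exhaustion}, is Step~2: producing $\pi_2$, endowing the relevant multiplicity space with a canonical smooth $G_2(F)$-action rather than merely an action of $(G_2)_{[x]}$, and transferring irreducibility and cuspidality between $G(F)$ and $G_2(F)$---all of it uniformly in $\Char F$ and under the optimal hypothesis $p\nmid|W|$, rather than the ``$p$ very large'' of \cite{Kim}.
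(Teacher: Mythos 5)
The paper itself does not give a proof here; being a survey, it records that \cite{Fi-exhaustion} proceeds in two steps: (i) show that every supercuspidal irreducible smooth representation contains a ``maximal datum'' (a chain of compatible minimal $K$-types that forms the skeleton of a Yu datum), and (ii) upgrade this maximal datum to an honest Yu datum and show the representation matches. Crucially, this entire analysis is carried out at the level of representations of compact open subgroups and their refinements; at no point does \cite{Fi-exhaustion} produce a supercuspidal representation of the twisted Levi $G_2(F)$ and recurse on it.

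Your proposal takes a genuinely different route, an induction in which one descends from a supercuspidal $\pi$ of $G(F)$ to a supercuspidal $\pi_2$ of $G_2(F)$ of strictly smaller depth, applies the inductive hypothesis there, and prepends the top layer. This ``descent through twisted Levis'' is the Bushnell--Kutzko covers philosophy (and what \cite{FOAMv1} later implements via Hecke-algebra isomorphisms for arbitrary Bernstein blocks), not what \cite{Fi-exhaustion} does, so the remark at the end that Step~2 ``is essentially the whole of \cite{Fi-exhaustion}'' mischaracterizes the cited work.

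There is also a genuine gap in Step~2 as written. You define $\pi_2$ to be ``roughly, the $\omega_1$-multiplicity space of $\pi$,'' but $\omega_1$ is a representation of $\wt K_1=(G_1)_{x,\frac{r_1}{2}}(G_2)_{[x]}$, and $G_2(F)$ does \emph{not} normalize $\wt K_1$; the multiplicity space $\Hom_{\wt K_1}(\omega_1,\pi)$ carries, a priori, only an action of the normalizer of $(\wt K_1,\omega_1)$, which is compact modulo center. Passing from this to a canonical smooth action of all of $G_2(F)$, and then showing the resulting representation is irreducible, supercuspidal and of the right depth, is precisely the content of a ``cover'' or a Hecke-algebra isomorphism between $\cH(G(F),\omega_1)$ and a Hecke algebra on $G_2(F)$. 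This is not a ``Mackey--Frobenius argument in the spirit of the proof of Theorem~\ref{Thm-depth-zero}''; that proof only establishes irreducibility of a compact induction and says nothing about transferring a representation to a smaller group. Without a concrete mechanism here (either a proved Hecke-algebra isomorphism or a verified Bushnell--Kutzko cover), Step~2 is an assertion, not an argument. There are also smaller glossed-over points: the existence of a character $\phi_1$ of all of $G_2(F)$ extending the datum determined by $X$ is a Howe-factorization-type statement requiring proof (cf.\ \cite[Proposition~3.6.7]{Kaletha-regular}), and the claim that the image of $x$ in $\sB(G_2,F)$ can be taken to be a vertex needs justification rather than being folded into ``replacing $(x,\chi)$ by a $G(F)$-conjugate.''

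By contrast, the strategy in \cite{Fi-exhaustion} sidesteps the need for a $G_2(F)$-action entirely: one shows that $\pi$ contains an explicit representation of a compact open subgroup that encodes the whole chain $G=G_1\supset G_2\supset\cdots\supset G_{n+1}$ and all the $\phi_i$ at once (the maximal datum), and then identifies $\pi$ with the corresponding $\cind_{\wt K}^{G(F)}\wt\rho$. Your outline captures the right ingredients for Step~1 (good/generic element $X$ under $p\nmid|W|$, the twisted Levi $\Cent_G(X)^\circ$, a generic character), but the bridge you build from there does not stand without the substantial machinery of covers, which is neither supplied nor what the cited reference relies on.
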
	

This result was shown by Kim (\cite{Kim}) in 2007 under the additional assumptions that $F$ has characteristic zero and that $p$ is ``very large''. Her approach was very different from the recent approach in \cite{Fi-exhaustion}. Kim proves statements about a measure one subset of all smooth irreducible representations of $G(F)$ by matching summands
of the Plancherel formula for the group and the Lie algebra, while the recent approach in \cite{Fi-exhaustion} is more explicit and can be used to recursively exhibit a Yu datum for the construction of the given representation. The latter approach consists of two main steps. The first step is to prove that every supercuspidal smooth irreducible representation of $G(F)$ contains a (maximal) datum as defined in \cite{Fi-exhaustion}, which can be viewed as a skeleton of a Yu datum. The second step consists of obtaining a Yu datum from that maximal datum and showing that the representation we started with is isomorphic to the one constructed from this Yu datum. We refer the reader to \cite{Fi-exhaustion} for the details and to Section 5 of \cite{Fi-CDM} for an expanded overview.

\subsection{Regular supercuspidal representations} \label{Section-regular-supercuspidal}
In this section we will reinterpret the Yu datum for a large class of supercuspidal representations in terms of a much simpler datum consisting only of an elliptic torus and an appropriate character thereof following Kaletha (\cite{Kaletha-regular}). This is a vast generalization of the representations we constructed in Section \ref{Section-more-positive-depth-sc}, which were attached to a pair of an elliptic torus $S$ and a $(G, S)$-generic character of $S(F)$, see Remark \ref{Rem-input-for-reps}. From now on, i.e., throughout this subsection and in Section \ref{Section-characters}, we assume that $p$ is odd, is not a bad prime for $G$, and does not divide the order of the fundamental group and the order of the center of the derived group of $G$. These conditions are satisfied if $p$ does not divide the order of the absolute Weyl group of $G$, for example. 

 The input for the construction in this subsection is a regular tame elliptic pair $(S, \theta)$, which is defined as follows.
\begin{Def}[{\cite[Definition~3.7.5]{Kaletha-regular}}] \label{Def-tame-regular-pair}
	A pair $(S, \theta)$ consisting of a maximal torus $S \subset G$ and a character $\theta: S(F) \rightarrow \bC^\times$  is called a \textit{regular tame elliptic pair} if 
	\begin{enumerate}[(1)] 
		\item $S \subset G$ is elliptic and splits over a tamely ramified extension $E$ over $F$,
		\item the action of the inertia subgroup of the absolute Galois group of $F$ on the root system 
		\[\Phi_{0+}:=\{ \alpha \in \Phi(G,S) \, | \, \theta(N_{E/F}(\check\alpha(1+\varpi_E\cO_E)))=1 \}
		 \]
		preserves a subset of positive roots $\Phi_{0+}^+ \subset \Phi_{0+}$, where $N_{E/F}$ denotes the norm map for the field extension $E/F$,
		\item the stabilizer in $N_{G^0}(S)(F)/S(F)$ of the restriction $\theta|_{S(F)_0}$ of $\theta$ to $S(F)_0$ is trivial, where $G^0 \subset G$ denotes the connected reductive subgroup of $G$ with maximal torus $S$ and root system $\Phi_{0+}$.
	\end{enumerate}
\end{Def}
Here $S(F)_0$ denotes $S(F)_{x,0}$ for $x$ the unique point of $\sB(S,F)$.

To each regular tame elliptic pair $(S, \theta)$, we will attach a supercuspidal representation $\pi_{(S,\theta)}$. In Section \ref{Section-characters} we will provide a formula for the Harish-Chandra character of $\pi_{(S,\theta)}$ under some assumptions on $F$. The construction of $\pi_{(S,\theta)}$ is achieved by using Yu's construction and twisting it by the quadratic character $\eps$ of Section \ref{Section-epsilon}. To do so,  we fix a regular tame elliptic pair $(S, \theta)$ and construct a Yu datum from it following Kaletha (\cite[\S3.6 and 3.7]{Kaletha-regular}):
\begin{enumerate}[(a)]
	\item In order to construct a twisted Levi sequence $G=G_1 \supseteq G_2 \supsetneq G_3 \supsetneq \hdots \supsetneq G_{n+1}$, for each $r \in \bR_{>0}$ we define the subset $\Phi_r \subset \Phi(G,S)$ by
	\[\Phi_{r}=\{ \alpha \in \Phi(G,S) \, | \, \theta(N_{E/F}(\check\alpha(1+\varpi_E^{\ceil{er}}\cO_E))=1 \} ,
	\] 
	where $e$ is the ramification index of $E/F$, and we write $\Phi_{r+} = \bigcap_{s>r} \Phi_s$. These subsets are stable under the Galois action. The set $\{ r \in \bR_{>0} \, | \, \Phi_{r} \neq \Phi_{r+} \}$ is finite and we let $r_1 > r_2 > \hdots > r_n >0$ be the real numbers that satisfy
	\begin{equation} \label{eqn-jumps}
		 \{ r \in \bR_{>0} \, | \, \Phi_{r} \neq \Phi_{r+} \} \cup \{\text{depth of } \phi\}= \{ r_1, r_2, \hdots, r_n \} 
	\end{equation}
	 We set $r_{n+1}=0$ and define $G_i$ to be the connected reductive subgroup of $G$ with maximal torus $S$ and root system $\Phi_{r_i+}$ for $1 \leq i \leq n+1$. By \cite[Lemma~3.6.1]{Kaletha-regular} and the assumption that $S$ splits over a tamely ramified extension $E/F$, the sequence $G=G_1 \supseteq G_2 \supsetneq G_3 \supsetneq \hdots \supsetneq G_{n+1}$ consists of twisted Levi subgroups that split over the tamely ramified extension $E/F$.
	\item  For $x$ we choose any point in $\wt\sB(S,F)\subset \wt\sB(G_{n+1},F)\subset \wt\sB(G,F)$.
	\item $r_1 > r_2 > \hdots > r_n >0$ are the real numbers satisfying Equation \eqref{eqn-jumps}.  
	\item $\phi_i$, for $1 \leq i \leq n$, is a character of $G_{i+1}(F)$ of depth $r_i$ that is trivial on the image of the $F$-points $(G_{i+1})_{\mathrm{sc}}(F)$ of the simply connected cover of the derived subgroup of $G_{i+1}$ and that is $(G_i, G_{i+1})$-generic relative to $x$ of depth $r_i$ if $G_i \neq G_{i+1}$, such that 
	\[\theta=\prod_{i=1}^{n+1} \phi_i|_{S(F)} ,\]
	where $\phi_{n+1}$ is trivial if $G_{n+1}=S$ and otherwise is a depth-zero character of $S(F)$.
	Such characters exist by \cite[Proposition~3.6.7]{Kaletha-regular} and are called a \textit{Howe factorization}\index{definition}{Howe factorization} of $(S, \theta)$ (\cite[Definition~3.6.2]{Kaletha-regular}).
	\item $\rho$ is an irreducible representation of $(G_{n+1})_{[x]}$ constructed in \cite[\S3.4.4]{Kaletha-regular} with the property that it is trivial on $(G_{n+1})_{x,0+}$ and that its restriction to $(G_{n+1})_{x, 0}$ is isomorphic to the inflation of the irreducible cuspidal Deligne--Lusztig representation $\pm R_{\bar S, \bar \phi_{n+1}}$ (\cite{DeligneLusztig}). Here $\bar S$  is the reductive quotient of the special fiber of the connected Néron model of $S$, which means that under the identification of $(\bG_{n+1})_x(\bF_q)$ with $(G_{n+1})_{x,0}/(G_{n+1})_{x,0+}$ the $\bF_q$-points $\bar S(\bF_q)$ of $\bar S$ are identified with $S_{x,0}/S_{x,0+}$. The character $\bar \phi_{n+1}$ is the restriction of $\phi_{n+1}$ to $S_{x,0}$ that factors through $S_{x,0}/S_{x,0+}$ and hence yields a character of $S_{x,0}/S_{x,0+}$. The assumption that $(S, \theta)$ is a regular tame elliptic pair ensures by \cite[Fact~3.4.18 and Lemma~3.6.5]{Kaletha-regular} that $\bar \phi_{n+1}$ is in general position in the notion of Deligne and Lusztig (\cite[Definition~5.15]{DeligneLusztig}) so that $\pm R_{\bar S, \bar \phi_{n+1}}$ is an irreducible cuspidal representation of $(G_{n+1})_{x,0}/(G_{n+1})_{x,0+}$ (\cite[Proposition~7.4 and Theorem~8.3]{DeligneLusztig}).	
\end{enumerate}

From this Yu datum attached to $(S, \theta)$, Yu's construction sketched in Section \ref{Section-Yus-construction} provides a compact open subgroup $\wt K$ with a representation $\wt \rho$, and we denote by $\pi_{(S, \theta)}$ \index{notation}{piStheta@$\pi_{(S, \theta)}$} the resulting supercuspidal representation  $\cind_{\wt K}^{G(F)}(\eps \wt \rho)$ obtained by twisting $\wt \rho$ by the quadratic character $\eps$ from Section \ref{Section-epsilon} before compactly inducing it from $\wt K$ to $G(F)$.

While the Yu datum constructed above relied on the choice of $x$ and a Howe factorization 	$\theta=\prod_{i=1}^{n+1} \phi_i|_{S(F)}$ of $\theta$, the resulting representation  $\pi_{(S, \theta)}$ is independent of these choices. This follows from Theorem \ref{Thm-equivalent-Yu-data}, because the resulting Yu data can be transformed into each other via an elementary transformation and a refactorization by \cite[Lemma~3.6.6]{Kaletha-regular}. Hence the following fact follows from 
\cite[Proposition~3.7.8]{Kaletha-regular}.
	
\begin{Fact}[{\cite{Kaletha-regular}}]  
	The supercuspidal representation $\pi_{(S, \theta)}$ is well defined. If $(S', \theta')$ is another regular tame elliptic pair, then $\pi_{(S, \theta)} \simeq \pi_{(S', \theta')}$ if and only if  $(S', \theta')$ is a $G(F)$-conjugate of $(S, \theta)$.
\end{Fact}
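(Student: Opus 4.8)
The plan is to reduce both assertions to Theorem~\ref{Thm-equivalent-Yu-data}, which already translates the isomorphism question for Yu's representations into the combinatorial question of $G(F)$-equivalence of Yu data. The one preliminary point needed is that replacing $\wt\rho$ by $\eps\wt\rho$ (Section~\ref{Section-epsilon}) does not disturb this dictionary: the quadratic character $\eps$ is constructed canonically from a Yu datum and is compatible with refactorization, elementary transformations, and $G(F)$-conjugation (this is part of \cite{FKS}), so $\cind_{\wt K}^{G(F)}(\eps\wt\rho)\simeq\cind_{\wt K'}^{G(F)}(\eps\wt\rho')$ if and only if the two Yu data are $G(F)$-equivalent, exactly as in the untwisted statement. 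Granting this, well-definedness of $\pi_{(S,\theta)}$ comes down to the observation that any two Yu data produced by the recipe (a)--(e) from the same $(S,\theta)$ are $G(F)$-equivalent: the twisted Levi sequence $G=G_1\supseteq\cdots\supseteq G_{n+1}$ and the jumps $r_1>\cdots>r_n$ are read off from $(S,\theta)$ alone via the subsets $\Phi_r\subset\Phi(G,S)$, the choice of $x\in\wt\sB(S,F)$ is immaterial because $\wt\sB(S,F)=x+X_*(Z(G))\otimes\bR$ for $S$ elliptic (compare Remark~\ref{Rem-input-for-reps}), and changing the Howe factorization is absorbed by an elementary transformation together with a refactorization by \cite[Lemma~3.6.6]{Kaletha-regular}, with the two depth-zero pieces $\rho$ matching because each is the inflation of the same Deligne--Lusztig representation $\pm R_{\bar S,\bar\phi_{n+1}}$.

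The ``if'' direction of the classification is the easy half. If $(S',\theta')={}^g(S,\theta)$ for some $g\in G(F)$, then conjugating a Howe factorization of $(S,\theta)$ by $g$ yields a Howe factorization of $(S',\theta')$, and conjugating the entire recipe (a)--(e) by $g$ exhibits the Yu datum attached to $(S',\theta')$ as the $G(F)$-conjugate (by $g$) of the one attached to $(S,\theta)$; Theorem~\ref{Thm-equivalent-Yu-data} then gives $\pi_{(S,\theta)}\simeq\pi_{(S',\theta')}$.

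The ``only if'' direction is where the real work sits, and I expect it to be the main obstacle. Suppose $\pi_{(S,\theta)}\simeq\pi_{(S',\theta')}$. By Theorem~\ref{Thm-equivalent-Yu-data} (in the $\eps$-twisted form above) the two Yu data are $G(F)$-equivalent; after an initial $G(F)$-conjugation we may assume they are related by a sequence of refactorizations and elementary transformations and hence share the twisted Levi sequence and the point $[x]$. The difficulty is to recover the pair $(S,\theta)$ up to $G(F)$-conjugacy from this $G(F)$-equivalence class, since refactorization scrambles the individual characters $\phi_i$ and the depth-zero datum $\rho$ while preserving only certain products of restrictions. I would handle this in two steps. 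First, at depth zero: $\rho|_{(G_{n+1})_{x,0}}$ is, up to the unavoidable sign, the cuspidal Deligne--Lusztig representation $R_{\bar S,\bar\phi_{n+1}}$, and condition (3) of Definition~\ref{Def-tame-regular-pair} forces $\bar\phi_{n+1}$ to be in general position, so this representation determines the pair $(\bar S,\bar\phi_{n+1})$ up to conjugacy in $(G_{n+1})_{x,0}/(G_{n+1})_{x,0+}$; this pins down $S$ together with the depth-zero part $\theta|_{S(F)_0}$ modulo conjugacy. Second, at positive depth: $\prod_i\phi_i|_{S(F)}=\theta$ by construction, and one checks that $G(F)$-equivalence of the Yu data forces the corresponding products of restrictions of the $\phi_i'$ to agree with those of the $\phi_i$ on each $S(F)_{x,r+}$, recovering the positive-depth part of $\theta$ as well. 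Combining the two steps gives that $(S',\theta')$ is $G(F)$-conjugate to $(S,\theta)$. The heart of the argument — verifying that the pair $(S,\theta)$ is a genuine invariant of the $G(F)$-equivalence class, rather than only the coarser data a refactorization obviously preserves, and that the regularity hypotheses in Definition~\ref{Def-tame-regular-pair} are exactly what makes this work — is the content of \cite[Proposition~3.7.8]{Kaletha-regular}, on which I would rely for the detailed verification.
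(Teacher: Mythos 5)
Your proposal follows the same route as the paper: reduce well-definedness to Theorem~\ref{Thm-equivalent-Yu-data} together with \cite[Lemma~3.6.6]{Kaletha-regular} (different choices of $x$ and of Howe factorization give $G(F)$-equivalent Yu data), and defer the injectivity of $(S,\theta)\mapsto\pi_{(S,\theta)}$ to \cite[Proposition~3.7.8]{Kaletha-regular}. Your explicit remark that the $\eps$-twist of \cite{FKS} is compatible with refactorization, elementary transformation and $G(F)$-conjugation — so that Theorem~\ref{Thm-equivalent-Yu-data} applies verbatim to the twisted construction — spells out a point the paper leaves implicit, but the underlying argument coincides.
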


\begin{Not}
	We call the representations $\pi_{(S, \theta)}$ arising from regular tame elliptic pairs $(S, \theta)$ \textit{regular supercuspidal representations}.
\end{Not}

Thus, the regular supercuspidal representations are parameterized by $G(F)$-conjugacy classes of regular tame elliptic pairs. 

\begin{Rem}
	We note that our parametrization here differs from Kaletha's (\cite[Corollary~3.7.10]{Kaletha-regular}) by the inclusion of the quadratic character $\epsilon$ that was not yet available at the time of writing of Kaletha's paper \cite{Kaletha-regular}. This version seems to fit a bit better into the Langlands program. More precisely, using Kaletha's initial parametrization required him to twist by an auxiliary quadratic character in his construction of the local Langlands correspondence. With the parametrization presented here that includes the twist by the quadratic character $\eps$ of \cite{FKS}, this twist happens already as part of the parametrization $(S, \theta) \mapsto \pi_{{(S, \theta)}}$ and hence no additional twist is necessary in the construction of the local Langlands correspondence.
	
	On the other hand, Kaletha (\cite{Kaletha-doublecover-tori}) suggests that from the Langlands parameter side it would seem more natural to attach a supercuspidal representation to a  genuine character of a double cover of the torus $S$, which would also simplify the character formulas discussed in Section \ref{Section-characters} below. Such a phenomenon is also observed in the setting of real Lie groups, where representations can be attached to characters of a double cover of the torus $S$, cf.\ \cite{Adams-Kaletha} which is built on \cite{HC-DS1}. Such a construction is still missing in the setting of $p$-adic groups.
\end{Rem}


\section{Harish-Chandra characters of supercuspidal representations} \label{Section-characters}
We recall from \cite[\S3.4]{Taibi-IHES}(which will appear in the same proceedings as this article) that the Harish-Chandra character distribution attached to an irreducible smooth representation $\pi$ can be represented by a unique locally constant function $\Theta_\pi$ on the subset of regular semisimple elements $G_{\mathrm{rs}}(F)$ of $G(F)$ (\cite[Theorem~16.3]{HC99}), which we call the Harish-Chandra character, and that the Harish-Chandra character determines the irreducible representation uniquely up to isomorphism. While the above references only treat the case where $F$ has characteristic zero, the same results hold for fields $F$ of positive characteristic, see for example \cite[\S13]{Adler-Korman}, which is based on work of Gopal Prasad and Harish-Chandra.

We keep the notation and assumptions from Section \ref{Section-regular-supercuspidal}, which includes fixing a regular tame elliptic pair $(S, \theta)$ and denoting by $E$ the splitting field of $S$. In this section we  present (under some assumption) a formula for the Harish-Chandra character of the representation $\pi_{(S, \theta)}$ constructed in Section \ref{Section-regular-supercuspidal}. While the results stated here appeared first in \cite{FKS}, they are based on prior work of Adler--Spice (\cite{Adler-Spice-characters}), DeBacker--Reeder (\cite{DeBackerReeder}), DeBacker--Spice (\cite{DeBacker-Spice}), Kaletha (\cite{Kaletha-regular}) and Spice (\cites{Spice18, Spice21v2}).
Following \cite{FKS} we start with providing a character formula that only holds for specific semisimple elements as this version is easier to state and requires less assumptions.

\subsection{The character of regular supercuspidal representations at topologically semisimple modulo center elements}\label{Section-character-shallow}

The Harish-Chandra character $\Theta_{\pi_{(S,\theta)}}$ of the representation $\pi_{(S,\theta)}$ evaluated at appropriately nice elements $\gamma$ turns out to have the following shape
$$\Theta_{\pi_{(S,\theta)}}(\gamma)=(-1)^{??} \cdot  \sum_{g \in N_G(S)(F) / S(F)} \frac{\theta({g\gamma g^{-1}})}{?(g\gamma g^{-1})},$$
where we hope the reader recognizes already a similarity to the character formula of an essentially square-integrable representation of a real reductive group described in \eqref{eqn:char:real}.

In order to make the character formula more precise in Theorem \ref{Thm-character-formula-shallow} below, we need to explain what appropriately nice elements $\gamma$ we are considering and introduce some notation that will replace ? and ?? in the above equation by explicit expressions.

\begin{Def}
	Let $Z$ be a subgroup in the center $Z(G)$ of $G$.
	We call an element $\gamma \in G(F)$ \textit{topologically semisimple modulo $Z$}\index{definition}{topologically semisimple (modulo $Z$)} if $\gamma$ is  semisimple and for every maximal torus $T$ in the centralizer of $\gamma$ and every $\chi \in X^*(T_{\bar F}/Z_{\bar F})$ the element $\chi(\gamma)$ has finite order coprime to $p$.
	
	We call an element $\gamma \in G(F)$ \textit{topologically unipotent modulo $Z$}\index{definition}{topologically unipotent (modulo $Z$)} if the image $\bar \gamma$ of $\gamma$ in $G/Z$  satisfies $\lim_{n \ra \infty} \bar \gamma^{p^n}=1$.
	
	For $\gamma \in G(F)$, we say that $\gamma=\gamma_0 \gamma_{0+}$ is a \textit{topological Jordan decomposition modulo $Z$}\index{definition}{topological Jordan decomposition (modulo $Z$)} if  $\gamma =\gamma_0 \gamma_{0+}= \gamma_{0+} \gamma_{0}$, the element $\gamma_0 \in G(F)$ is topologically semisimple modulo $Z$ and the element  $\gamma_{0+} \in G(F)$ is topologically unipotent modulo $Z$.
\end{Def}
These properties were introduced in a more general frame work in \cite{Spice-Jordan-decomposition} and  called absolutely $F$-semisimple modulo $Z$, topologically $F$-unipotent modulo $Z$ (\cite[Lemma~2.21]{Spice-Jordan-decomposition}), and a topological $F$-Jordan decomposition modulo $Z$ (\cite[Proposition~2.42]{Spice-Jordan-decomposition}), respectively. The following result follows from \cite[Proposition~2.24 and Proposition~2.36]{Spice-Jordan-decomposition}.

\begin{Prop}[{\cite{Spice-Jordan-decomposition}}]
	If $\gamma \in G(F)$ is a compact-mod-center element, then $\gamma$ has a topological Jordan decomposition modulo $Z(G)$: $\gamma=\gamma_0\gamma_{0+}$. The elements $\gamma_0$ and $\gamma_{0+}$ are uniquely determined modulo $Z(G)(F)$.
\end{Prop}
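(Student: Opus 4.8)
The plan is to reduce everything to the adjoint quotient $\pi\colon G\to G_{\mathrm{ad}}:=G/Z(G)$, where ``modulo $Z(G)$'' becomes ``on the nose'', and then invoke the classical decomposition of a procyclic profinite abelian group into its pro-$p$ and prime-to-$p$ parts. Set $\bar\gamma:=\pi(\gamma)$. One first checks that $\gamma$ being compact modulo $Z(G)(F)$ is equivalent to $\bar\gamma$ being a bounded (compact) element of $G_{\mathrm{ad}}(F)$: indeed $\pi$ identifies $G(F)/Z(G)(F)$ with an open, hence closed, subgroup of $G_{\mathrm{ad}}(F)$, so relative compactness transfers. Thus $\overline{\langle\bar\gamma\rangle}$ is a compact abelian procyclic group, which is the direct product of its pro-$p$ Sylow subgroup $P$ and its prime-to-$p$ part $Q$; write $\bar\gamma=\bar\gamma_0\,\bar\gamma_{0+}$ accordingly, with $\bar\gamma_{0+}\in P$ and $\bar\gamma_0\in Q$.

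For existence of a lift, I would introduce the closed subgroup $A:=\overline{\langle\gamma\rangle\cdot Z(G)(F)}\subseteq G(F)$; it is abelian, being the closure of a group generated by the pairwise commuting set $\{\gamma\}\cup Z(G)(F)$, and $\pi$ maps $A$ onto $\overline{\langle\bar\gamma\rangle}$ with kernel $Z(G)(F)$. Pulling back $\overline{\langle\bar\gamma\rangle}=P\times Q$ gives closed subgroups $A_P,A_Q\subseteq A$ with $A=A_PA_Q$ and $A_P\cap A_Q=Z(G)(F)$, and since $A$ is abelian these commute elementwise. Writing $\gamma=\gamma_0\gamma_{0+}$ with $\gamma_{0+}\in A_P$ and $\gamma_0\in A_Q$ (possible as $\gamma\in A=A_PA_Q$), the two factors commute, multiply to $\gamma$, and satisfy $\pi(\gamma_{0+})=\bar\gamma_{0+}$, $\pi(\gamma_0)=\bar\gamma_0$ by uniqueness of the decomposition in $P\times Q$. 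It remains to check the two adjectives. Since $\bar\gamma_{0+}\in P$ is pro-$p$, $\bar\gamma_{0+}^{\,p^n}\to1$, i.e.\ $\gamma_{0+}$ is topologically unipotent modulo $Z(G)$. For $\gamma_0$: a compact element whose closure has prime-to-$p$ pro-order is semisimple (its algebraic unipotent part would generate a group that is simultaneously pro-$p$ and pro-$p'$, hence trivial), and semisimplicity passes through the central quotient $\pi$ (a unipotent element of $\ker\pi=Z(G)$ is trivial), so $\gamma_0$ is semisimple; being central in its own centralizer it lies in every maximal torus $T$ of $\Cent_G(\gamma_0)$, and for $\chi\in X^*(T_{\bar F}/Z_{\bar F})$ one has $\chi(\gamma_0)=\bar\chi(\bar\gamma_0)$ with $\bar\gamma_0$ in the prime-to-$p$ group $Q$; hence $\chi(\gamma_0)$ lies in the prime-to-$p$ roots of unity of $\bar F$ and so has finite order coprime to $p$. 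Thus $\gamma_0$ is topologically semisimple modulo $Z(G)$.

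For uniqueness modulo $Z(G)(F)$, suppose $\gamma=\gamma_0\gamma_{0+}=\gamma_0'\gamma_{0+}'$ are two topological Jordan decompositions modulo $Z(G)$. Applying $\pi$ and using the computations above, $\bar\gamma_0,\bar\gamma_0'$ have finite order prime to $p$, $\bar\gamma_{0+},\bar\gamma_{0+}'$ satisfy $g^{p^n}\to1$, and each pair commutes; so the compact abelian group generated by each pair contains $\bar\gamma$, and in it $\bar\gamma_{0+}$ (resp.\ $\bar\gamma_0$) is precisely the pro-$p$ (resp.\ prime-to-$p$) component. By functoriality of the Sylow decomposition of profinite abelian groups under intersection with a closed subgroup, this component coincides with the intrinsic pro-$p$ (resp.\ prime-to-$p$) component of $\bar\gamma$ inside $\overline{\langle\bar\gamma\rangle}$; hence $\bar\gamma_{0+}=\bar\gamma_{0+}'$ and $\bar\gamma_0=\bar\gamma_0'$ in $G_{\mathrm{ad}}(F)$. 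Therefore $\gamma_{0+}'=\gamma_{0+}z$ and $\gamma_0'=\gamma_0z'$ with $z,z'\in Z(G)(F)$, and comparing the products forces $z'=z^{-1}$, which is exactly the asserted ambiguity.

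The step I expect to be the main obstacle is the lifting: one must produce honest elements of $G(F)$, not merely of $G_{\mathrm{ad}}(F)$, that genuinely commute and whose product is $\gamma$, whereas lifting $\bar\gamma_{0+}$ and $\bar\gamma_0$ independently only yields a commutator in $Z(G)(F)$ that need not vanish. The device that resolves this cleanly is to carry out the entire splitting inside the single abelian group $A=\overline{\langle\gamma\rangle Z(G)(F)}$; the points requiring genuine (if mild) care are then verifying that $A$ is abelian and that $\pi|_A$ is onto $\overline{\langle\bar\gamma\rangle}$ with kernel exactly $Z(G)(F)$, together with the standard but slightly fiddly structure theory of $\bar F^\times$ and of procyclic profinite groups used to certify the two adjectives.
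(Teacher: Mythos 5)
The paper itself does not prove this proposition; it is quoted directly from \cite{Spice-Jordan-decomposition} (Propositions 2.24 and 2.36), so there is no in-paper argument to compare against. Your overall strategy --- pass to $G_{\mathrm{ad}}=G/Z(G)$, where ``modulo $Z(G)$'' becomes equality, split the compact procyclic group $\overline{\langle\bar\gamma\rangle}$ into its pro-$p$ and prime-to-$p$ parts $P\times Q$, and lift the factorization to $G(F)$ inside the single abelian closed subgroup $A=\overline{\langle\gamma\rangle Z(G)(F)}$ --- is sound, and the device of working entirely inside $A$ is exactly the right way around the lifting obstruction you flag. The uniqueness paragraph also goes through.

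There is, however, a genuine gap in the verification that $\gamma_0$ is topologically semisimple modulo $Z(G)$. Your parenthetical ``its algebraic unipotent part would generate a group that is simultaneously pro-$p$ and pro-$p'$'' tacitly assumes that the unipotent Jordan factor $u$ of $\bar\gamma_0$ lies in $\overline{\langle\bar\gamma_0\rangle}$, and there is no a priori reason for that: the Jordan components of an element of a procyclic group need not belong to that group. Likewise, the claim that $\chi(\gamma_0)$ ``lies in the prime-to-$p$ roots of unity of $\bar F$'' silently assumes that $Q$ is finite, or at least that $\chi$ has finite image on it. Both issues are repaired by one observation that your write-up never actually records: a compact procyclic pro-$p'$ subgroup of $G_{\mathrm{ad}}(F)$ is finite. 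Indeed, embed $G_{\mathrm{ad}}\hookrightarrow\GL_N$ over $F$ and conjugate $Q$ into $\GL_N(\cO)$; the principal congruence subgroup $K_1=1+\varpi M_N(\cO)$ is pro-$p$, so the closed subgroup $Q\cap K_1$ is both pro-$p$ and pro-$p'$, hence trivial, and $Q$ injects into $\GL_N(\bF_q)$. Thus $\bar\gamma_0$ has finite order $m$ coprime to $p$; then $u^m=1$ with $u$ unipotent forces $u=1$, giving semisimplicity of $\bar\gamma_0$ and hence of $\gamma_0$ (the kernel of $G\to G_{\mathrm{ad}}$ is of multiplicative type, so a unipotent element of $G$ mapping to $1$ is both semisimple and unipotent, hence trivial), and $\chi(\gamma_0)^m=1$ gives the required finiteness-coprime-to-$p$ of $\chi(\gamma_0)$. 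With this lemma inserted, your argument is complete.
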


By the following fact it suffices to describe the Harish-Chandra character of $\pi_{(S,\theta)}$ on compact-mod-center elements.

\begin{Fact}[{\cite[Théorème]{Deligne-support-caractere}}]
 The Harish-Chandra character of a supercuspidal representation is supported on the compact-mod-center elements.
\end{Fact}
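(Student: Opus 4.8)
Since $\Theta_{\pi}$ is represented by an $L^1_{\mathrm{loc}}$ function that is locally constant on the open dense, full-measure set $G_{\mathrm{rs}}(F)$ of regular semisimple elements, and the set of compact-mod-center elements of $G(F)$ is closed, it suffices to prove that $\Theta_{\pi}(\gamma)=0$ for every $\gamma\in G_{\mathrm{rs}}(F)$ that is not compact modulo $Z(G)(F)$. The plan is to follow Deligne \cite{Deligne-support-caractere}: put $\gamma$ inside a proper Levi subgroup in a ``contracting'' position, apply the character formula of Casselman (and van Dijk) relating $\Theta_\pi$ there to a Jacquet module of $\pi$, and use that all proper Jacquet modules of a supercuspidal representation vanish.

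\textbf{Step 1: a structural lemma.} First I would show: every $G$-regular semisimple $\gamma$ that is not compact modulo center lies in $L(F)$ for some proper $F$-parabolic $P=LN\subset G$ and satisfies $\langle\alpha,H_L(\gamma)\rangle<0$ for every root $\alpha$ of the split center $A_L$ of $L$ occurring in $\mathfrak n$; equivalently $\gamma^m N(F)\gamma^{-m}\to\{1\}$, so that $\gamma$ is strictly $N$-contracting. For this, let $T=Z_G(\gamma)$, a maximal torus, and let $A_T$ be its maximal $F$-split subtorus. If $A_T$ were central in $G$ then $H_G(\gamma)\in\mathfrak a_{A_T}\subseteq\mathfrak a_G$, i.e.\ $\gamma$ would be compact modulo center; hence $A_T$ is non-central and $H:=H_G(\gamma)$ lies in $\mathfrak a_{A_T}\smallsetminus\mathfrak a_G$. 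Let $A_L\subseteq A_T$ be the subtorus cut out by the roots of $A_T$ vanishing on $H$ and put $L=Z_G(A_L)$: then $T\subseteq L$ (so $\gamma\in L(F)$), $L\subsetneq G$ because $H\notin\mathfrak a_G$, and $H$ lies in $\mathfrak a_L$ off every root hyperplane of $\Phi(G,A_L)$, hence in an open chamber of $\mathfrak a_L/\mathfrak a_G$. Choosing $P=LN$ so that this chamber is antidominant yields the contraction property, which, being open, also holds throughout a neighbourhood of $\gamma$ in $G_{\mathrm{rs}}(F)$.

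\textbf{Step 2: character formula and conclusion.} With $\gamma$ so situated, Casselman's character formula gives
\[
\Theta_{\pi}(\gamma)=\delta_P(\gamma)^{1/2}\,\Theta_{r_P(\pi)}(\gamma),
\]
where $\delta_P$ is the modulus character of $P$ and $r_P(\pi)$ is the normalized Jacquet module of $\pi$ along $P$, a smooth finite-length representation of $L(F)$. Since $\pi$ is supercuspidal and $P$ is proper, $r_P(\pi)=0$ — this is one of the equivalent characterizations of supercuspidality recalled in \cite[\S3.2]{Taibi-IHES} — so $\Theta_{r_P(\pi)}$ is identically zero and $\Theta_{\pi}(\gamma)=0$. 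As $\Theta_\pi$ is invariant under conjugation, this holds for all $\gamma\in G_{\mathrm{rs}}(F)$ not compact modulo center, proving the Fact.

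\textbf{Main difficulty.} Step 1 is elementary combinatorics of tori and root systems. The substantive input is the character formula in Step 2, and in particular its validity on the \emph{whole} open set of $G$-regular, strictly $N$-contracting elements of $L(F)$ — not merely ``far out'' in that direction. Establishing it requires the admissibility and finite length of $\pi$ together with a delicate filtration/trace estimate for $\pi(f)$ with $f$ supported near $\gamma$; this analysis is the heart of \cite{Deligne-support-caractere}, and it — rather than the reduction to Jacquet modules — is where essentially all of the work lies.
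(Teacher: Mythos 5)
The paper states this as a Fact with a citation to Deligne's 1976 C.R.A.S. note and gives no proof of its own, so there is no internal argument to compare against; I will assess your reconstruction on its own terms.

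Your reconstruction is the standard (and, in substance, Deligne's) argument, and it is correct in outline. Step 1 is a sound piece of root-system combinatorics: for $\gamma\in G_{\mathrm{rs}}(F)$ non-compact modulo $Z(G)(F)$, with $T=Z_G(\gamma)$ and $H=H_T(\gamma)\in\fa_{A_T}$, the Levi $L=Z_G(A_L)$ you build (with $A_L$ the connected kernel in $A_T$ of the roots annihilating $H$) contains $T$, is proper precisely because $H\notin\fa_G$, and places $H$ in an open chamber of $\fa_{A_L}$, so that the antidominant choice of $P=LN$ makes $\gamma$ strictly $N$-contracting; this condition is open. Step 2 is the right mechanism: parabolic descent of characters (Casselman's formula, which postdates Deligne but cleanly packages the same idea) identifies $\Theta_\pi$ near $\gamma$ with the character of a proper Jacquet module of $\pi$, which vanishes by supercuspidality; one then concludes by conjugation invariance and local constancy of $\Theta_\pi$ on $G_{\mathrm{rs}}(F)$. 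You have also correctly located where the real work is. The elementary geometry of Step~1 gives only that $\gamma$ is strictly contracting, with no lower bound on $\abs{\langle\alpha,H_L(\gamma)\rangle}$; the descent formula as usually stated requires the element to be ``sufficiently deep'' in the antidominant cone, with a threshold depending on the level of $\pi$, and since $\Theta_\pi$ is locally constant but the strongly contracting locus is not dense in the strictly contracting locus, one cannot simply propagate the vanishing. Establishing the trace identity (equivalently, showing $\operatorname{tr}\pi(f)=0$ for $f$ supported in a small neighbourhood of any strictly contracting $\gamma$, by factoring $\pi(f)$ through the Jacquet module using an Iwahori-factored compact open subgroup adapted to $\gamma$) is precisely the analytic content of Deligne's note, and of Casselman's Theorem~5.2, and your final paragraph flags this accurately rather than glossing over it. Two minor remarks: the invocation of van Dijk is a red herring here (van Dijk's theorem concerns characters of parabolically \emph{induced} representations, not descent), and the precise normalization ($\delta_P^{1/2}$, which parabolic, normalized versus unnormalized Jacquet module) is immaterial for this application since the right-hand side is identically zero.
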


In this subsection we will obtain a character formula for elements that are topologically semisimple modulo $Z(G)$, but we first need to introduce a bit more notation.

We denote by $e(G)$ the Kottwitz sign, which is defined as $(-1)^{r(G_{\mathrm{qs}})-r(G)}$, where $r(G)$ denotes the $F$-rank of the derived subgroup of $G$, and $G_{\mathrm{qs}}$ denotes the quasi-split inner form of $G$.

We let $T_G$ be a minimal Levi subgroup of the quasi-split inner form $G_{\mathrm{qs}}$ of $G$ and denote by 
$\eps_L(X^*(T_G)_\bC-X^*(S)_\bC, \varphi)$ the local $\epsilon$-factor as normalized by Langlands, see \cite[(3.6)]{Tate-NT}.

$D(\gamma)$ is the Weyl discriminant given by $D(\gamma)=\abs{\prod_{\alpha \in \Phi(G,S)}(1-\alpha(\gamma))}$ for $\gamma \in S(\bar F)$. Since the Weyl discriminant is invariant under the action of the normalizer of $S$, we may extend $D$ to all semisimple elements of $G(F)$ via $G(\bar F)$-conjugation.  

We choose a- and $\chi$-data consisting of $a_\alpha \in F_\alpha^\times$ and $\chi''_\alpha: F_\alpha^\times \ra \bC^\times$ for every $\alpha \in \Phi(G, S)$ as in \cite[\S4.2]{FKS} and refer the reader to  \textit{loc.\ cit} and \cite{Kaletha-regular} for precise definitions. Then we set $\Delta^{\mathrm{abs}}_{II}[a, \chi''](\gamma)=
\prod_{\alpha \in \Phi(G, S)/\Gal(\bar F/F) \atop -\alpha \in \Gal(\bar F/F)\alpha}\chi''_\alpha\left(\frac{\alpha(\gamma)-1}{a_\alpha}\right)$ for $\gamma \in S(F)$ and can now state the full character formula for topologically semisimple elements. 

\begin{Thm}[{\cite[Proposition~4.3.2]{FKS}}] \label{Thm-character-formula-shallow}
	Let $\gamma \in S(F) \subset G(F)$ be regular and topologically semisimple modulo $Z(G)$.
	Then 
	\begin{equation} \label{eqn:charformulashallow}
		\Theta_{\pi_{(S,\theta)}}(\gamma)=e(G)\eps_L(X^*(T_G)_\bC-X^*(S)_\bC, \varphi)  \sum_{g \in N(S)(F) / S(F)} D({^g\gamma})^{-\frac{1}{2}}\Delta^{\mathrm{abs}}_{II}[a, \chi'']({^g\gamma})\theta({^g\gamma}) .
	\end{equation}
\end{Thm}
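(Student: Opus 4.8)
The plan is to reduce the character formula at a regular topologically semisimple modulo center element $\gamma$ to two more tractable ingredients: first, a formula expressing $\Theta_{\pi_{(S,\theta)}}$ as a sum over $G(F)$-conjugacy classes of (twisted) Levi subgroups containing $\gamma$ in terms of Deligne--Lusztig characters of the finite reductive quotients appearing in Yu's construction, and second, the explicit evaluation of those Deligne--Lusztig characters. Since $\gamma$ is topologically semisimple modulo $Z(G)$, it is (after conjugation) an element of a parahoric-type subgroup whose pro-$p$ part is trivial, so when one expands the character of $\cind_{\wt K}^{G(F)}(\eps\wt\rho)$ via the Frobenius formula $\Theta_{\cind_{\wt K}^{G(F)}(\eps\wt\rho)}(\gamma) = \sum_{g\in \wt K\backslash G(F)/\langle\gamma\rangle \text{-type sum}} \mathrm{tr}\,(\eps\wt\rho)(g^{-1}\gamma g)$ only those $g$ for which $g^{-1}\gamma g$ lands in $\wt K$ contribute. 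The key reduction is that for such $g$, the representation $\eps\wt\rho = \eps\,(\rho\otimes\kappa)$ evaluated on the topologically semisimple part behaves through $\kappa$ by essentially the generic characters $\phi_i$ (since $\kappa$ restricted to the relevant pro-$p$ groups is governed by the $\hat\phi_i$, and a topologically semisimple element meets these only through its image in the reductive quotient), while $\rho$ contributes the Deligne--Lusztig character $\pm R_{\bar S,\bar\phi_{n+1}}$ of $(G_{n+1})_{x,0}/(G_{n+1})_{x,0+}$. This is exactly the strategy of Adler--Spice and DeBacker--Spice, refined in \cite{FKS}: I would cite \cite[\S4]{FKS} (ultimately \cite{Adler-Spice-characters}, \cite{DeBacker-Spice}, \cite{Spice18, Spice21v2}) for the general character formula of $\cind_{\wt K}^{G(F)}(\eps\wt\rho)$ and specialize it to topologically semisimple $\gamma$.

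Next I would carry out the specialization. For $\gamma$ topologically semisimple modulo $Z(G)$, the general formula collapses dramatically: the "topologically unipotent part" $\gamma_{0+}$ is trivial modulo center, so the orbital-integral / Shalika-germ-type terms that usually appear reduce to evaluation at identity, and the nilpotent orbit sum degenerates to a single term. What survives is a sum over $g\in N_G(S)(F)/S(F)$ — these index the ways $\gamma$ can be conjugated into $S(F)\subset \wt K$ — of the product of: (i) the quadratic character $\eps$ evaluated at ${}^g\gamma$, which together with the $\chi$-data bookkeeping produces $\Delta^{\mathrm{abs}}_{II}[a,\chi'']({}^g\gamma)$; (ii) the value of $\prod_i\phi_i$ on ${}^g\gamma$, which by the Howe factorization $\theta=\prod_{i=1}^{n+1}\phi_i|_{S(F)}$ assembles into $\theta({}^g\gamma)$; and (iii) the Deligne--Lusztig character value $\pm R_{\bar S,\bar\phi_{n+1}}$ on the image of ${}^g\gamma$, which by the character formula of Deligne--Lusztig (\cite{DeligneLusztig}) for a regular element in a maximal torus equals $\pm\bar\phi_{n+1}(\text{image})$ up to the sign $\pm = e(G_{n+1})$-type factor. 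Collecting the various signs gives the Kottwitz sign $e(G)$, and collecting the normalization constants coming from the Weil representations in $\kappa$ (via Gauss sums) together with the $\epsilon$-factor bookkeeping of Langlands constants gives the factor $\eps_L(X^*(T_G)_\bC - X^*(S)_\bC,\varphi)$ and the discriminant normalization $D({}^g\gamma)^{-1/2}$ (the half-discriminant arising as the ratio of formal degrees / the "$d$-function" in Harish-Chandra's normalization, matching the $\prod(1-\alpha(\cdot))$ denominator in the real case).

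The main obstacle, and the part requiring the most care, is the precise matching of signs and normalizing constants — i.e., verifying that the accumulated signs from (a) the Deligne--Lusztig sign $\pm$, (b) the Kottwitz signs of the tower $G_{n+1}\subset\cdots\subset G_1=G$, (c) the signs in the quadratic character $\eps$, and (d) the sign $(-1)$-powers in Harish-Chandra's $L^2$-normalization all combine into exactly $e(G)$, and that the $\epsilon$-factors and Gauss sums coming from the Weil--Heisenberg representations $\omega_i$ in Yu's $\kappa$ combine into exactly $\eps_L(X^*(T_G)_\bC - X^*(S)_\bC,\varphi)$. This is genuinely delicate: it is precisely the computation for which the quadratic twist $\eps$ of \cite{FKS} was introduced, and it relies on Spice's Gauss-sum and root-of-unity identities (\cite{Spice21v2}) together with the $\epsilon$-factor manipulations in \cite[\S4.3]{FKS}. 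Rather than reproduce these, I would assemble the statement by invoking \cite[Proposition~4.3.2]{FKS} directly, indicating how each factor on the right-hand side of \eqref{eqn:charformulashallow} arises from the corresponding ingredient of the Yu datum as sketched above, and noting that the topologically-semisimple hypothesis on $\gamma$ is exactly what makes all the "positive-depth" contributions trivialize so that only the depth-zero Deligne--Lusztig piece and the linear characters $\phi_i$ remain.
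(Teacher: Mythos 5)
The paper, being a survey, does not prove this theorem: it states it as an imported result, attributed explicitly to \cite[Proposition~4.3.2]{FKS}, and immediately moves on to the comparison with the real character formula \eqref{eqn:char:real}. There is therefore no ``paper's own proof'' to compare against, and your proposal is consistent with the paper's treatment in that you, too, ultimately invoke \cite[Proposition~4.3.2]{FKS} rather than reconstruct the argument.

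That said, the roadmap you sketch of the underlying machinery --- Frobenius/Mackey expansion of $\cind_{\wt K}^{G(F)}(\eps\wt\rho)$, the collapse of the Fourier-transform-of-orbital-integral term when $\gamma_{0+}$ is central, the Deligne--Lusztig character of $\rho$, the Gauss-sum/$\epsilon$-factor bookkeeping from the Weil--Heisenberg constituents of $\kappa$, the role of the quadratic twist $\eps$ in normalizing the accumulated signs, and the Howe factorization reassembling the $\phi_i$ into $\theta$ --- is a fair high-level description of how the proof proceeds in \cite{FKS} and its precursors (Adler--Spice, DeBacker--Spice, Spice), and it correctly singles out the sign and $\epsilon$-factor matching as the genuinely delicate step that motivated the introduction of $\eps$. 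One small imprecision: you write that $\gamma$ is ``an element of a parahoric-type subgroup whose pro-$p$ part is trivial.'' The parahoric has a large pro-$p$ radical; the correct statement is that the topologically unipotent part $\gamma_{0+}$ of $\gamma$ is trivial modulo $Z(G)$, so that $\gamma$'s interaction with the pro-$p$ radical trivializes and the positive-depth contributions reduce to the linear characters $\phi_i$ and the depth-zero Deligne--Lusztig piece.
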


If one interprets the terms in the character formula \eqref{eqn:charformulashallow} appropriately in the setting of reductive groups over $\bR$, then one recovers exactly the character formula \eqref{eqn:char:real} as was shown by Kaletha in \cite[\S4.11]{Kaletha-regular}. We refer the reader to \textit{loc.\ cit} for a detailed explanation.

\subsection{The character of regular supercuspidal representations in general}

We keep the notation from the previous subsection, and now we assume in addition that the characteristic of the local field $F$ is zero and that the exponential map $\exp$ for $G$ converges on $\cup_{x \in \sB(G, F)} \, \fg_{x,0+}$. This is satisfied if $p \geq (2+e)n$, where $e$ is the ramification index of $F/\bQ_p$ and $n$ is the dimension of the smallest faithful algebraic representation of $G$, for example. Then there exists an element $X \in \Lie^*(S)(F)$ that satisfies $\theta(\exp(Y))=\varphi(\<X,Y\>)$ for all $Y \in \fs_{x,0+}$. Using the decomposition $\fg(F)=\fs(F) \oplus \fr(F)$ with $\fr(F)= \fg(F) \cap \bigoplus_{\alpha \in \Phi(G,S)} \fg(E)_\alpha$ we can extend $X$ to an element of $\Lie^*(G)(F)$ by setting it to be trivial on $\fr(F)$. For $g\in G(F)$, we write $X^g=\Ad^*(g)^{-1}X$, i.e., $X^g \in \fg^*(F)$ satisfies $\<X^g, Y\> =\< X,{^gY}\>$ for all $Y \in \fg(F)$.

The Harish-Chandra character of $\pi_{(S,\theta)}$ evaluated at a regular semisimple element $\gamma \in G(F)$ with topological Jordan decomposition modulo $Z(G)$ given by $\gamma=\gamma_0\gamma_{0+}$ is a combination of the character formula for topological semisimple modulo $Z(G)$ elements and a contribution of $\gamma_{0+}$ via the inverse, denoted by $\log$, of the exponential map $\exp$ as argument for an appropriate Fourier transform of an orbital integral. To make this precise, we denote by $J$ the identity connected component $\Cent_G(\gamma_0)^\circ$ of the centralizer of $\gamma_0$ in $G$, and by $\hat\cO^{J}_{X^g}$  the renormalized function that represents the Fourier transform of the orbital integral through $X^g$ for the group $J$, see, e.g., \cite[\S2.2, page~8]{Spice21v2} for details.

\begin{Thm}[{\cite[Proposition~4.3.5]{FKS}}]
	Let $\gamma=\gamma_0\gamma_{0+}$ be a topological Jordan decomposition modulo $Z(G)$ of a compact-mod-center, regular semisimple element $\gamma \in G(F)$ whose centralizer splits over a tame extension of $F$. Then 
	\begin{eqnarray*}
		\Theta_{\pi_{(S,\theta)}}(\gamma)&=&e(G)e(J)\eps_L(X^*(T_G)_\bC-X^*(T_J)_\bC, \varphi) D(\gamma)^{-\frac{1}{2}}\\
	& & \cdot	 \sum_{g \in S(F) \backslash G(F) / J(F) \atop {^g\gamma_0} \in S(F)} \Delta^{\mathrm{abs}}_{II}[a, \chi'']({^g\gamma_0})\theta({^g\gamma_0})\hat\cO^{J}_{X^g}(\log \gamma_{0+})  .
	\end{eqnarray*}
\end{Thm}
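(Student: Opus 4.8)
The plan is to combine three ingredients, all available in the literature cited above: a reduction using that supercuspidal characters are supported on compact-mod-center elements, a semisimple-descent step isolating the contribution of the topologically semisimple part $\gamma_0$ and passing to the connected centralizer $J=\Cent_G(\gamma_0)^\circ$, and an explicit local character expansion on $J(F)$ that produces the Fourier-transform-of-orbital-integral factor $\hat\cO^J_{X^g}(\log\gamma_{0+})$. The shallow formula of Theorem~\ref{Thm-character-formula-shallow} is the special case $\gamma=\gamma_0$ (so $\gamma_{0+}=1$, $J=S$, and $\hat\cO^S_{X^g}(0)$ contributes trivially), and the general statement should be obtained by gluing that ``semisimple'' computation on $G$ to a ``topologically unipotent'' computation on $J$.

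First I would reduce to the case that $\gamma$ is compact-mod-center; this is automatic because $\pi_{(S,\theta)}$ is supercuspidal, whose character vanishes off the compact-mod-center locus by the theorem of Deligne cited above. Then $\gamma$ admits a topological Jordan decomposition $\gamma=\gamma_0\gamma_{0+}$ modulo $Z(G)$, unique modulo $Z(G)(F)$, and after replacing $\gamma$ by a conjugate I may assume $\gamma_0\in S(F)$. The hypothesis that the centralizer of $\gamma$ splits over a tame extension guarantees that $J:=\Cent_G(\gamma_0)^\circ$ is a connected reductive tame twisted Levi subgroup of $G$ containing $S$, for which $(S,\theta)$ is again a regular tame elliptic pair (ellipticity descends since $Z(G)\subseteq Z(J)$, and regularity since $\Phi(J,S)\subseteq\Phi(G,S)$), so that Kaletha's construction produces a regular supercuspidal representation $\pi^J_{(S,\theta)}$ of $J(F)$ by restricting the Howe factorization of $\theta$ to $J$.

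The analytic heart of the argument is the explicit asymptotic expansion of $\Theta_{\pi_{(S,\theta)}}$ about $\gamma_0$ due to Spice (building on Adler--Spice and DeBacker--Spice), which expresses $\Theta_{\pi_{(S,\theta)}}(\gamma_0\exp Y)$, for $Y$ topologically nilpotent in $\fj(F)$, as $e(G)e(J)\,\epsilon_L(X^*(T_G)_\bC-X^*(T_J)_\bC,\varphi)\,D(\gamma)^{-1/2}$ times a sum over $g\in S(F)\backslash G(F)/J(F)$ with ${}^g\gamma_0\in S(F)$ of the terms $\Delta^{\mathrm{abs}}_{II}[a,\chi'']({}^g\gamma_0)\,\theta({}^g\gamma_0)$ weighted by the Harish-Chandra character of the $\gamma_0$-descended regular supercuspidal representation of $J(F)$ evaluated near the identity. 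One then invokes the local character expansion for regular supercuspidals of $J(F)$ --- DeBacker--Reeder in the depth-zero case, Spice and Adler--Spice in general --- according to which the character of such a representation at a topologically unipotent element $\exp Y$ equals the renormalized Fourier transform $\hat\cO^J_{X}(Y)$ of the (regular semisimple, elliptic) orbital integral through the element $X\in\Lie^*(S)(F)\subseteq\Lie^*(J)(F)$ representing $\theta$; the equality is exact on the relevant neighbourhood precisely because $\mathrm{char}(F)=0$ and $\exp$ converges on $\bigcup_{x}\fg_{x,0+}$. Conjugating by $g$ turns $\hat\cO^J_X$ into $\hat\cO^J_{X^g}$ with $X^g=\Ad^*(g)^{-1}X$, and applying this with $Y=\log\gamma_{0+}$ yields the asserted identity.

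The step I expect to be the main obstacle is the precise determination of all the normalizing constants: the Kottwitz signs $e(G)$ and $e(J)$, the Langlands $\epsilon$-factor $\epsilon_L(X^*(T_G)_\bC-X^*(T_J)_\bC,\varphi)$, the toral invariant $\Delta^{\mathrm{abs}}_{II}[a,\chi'']$ together with the compatible choice of $a$- and $\chi$-data, and the renormalization of $\hat\cO^J_{X^g}$. In particular, one must verify that it is exactly the twist of Yu's construction by the quadratic character $\epsilon$ of Section~\ref{Section-epsilon} that makes these constants assemble into the clean product above; this rests on Spice's delicate bookkeeping of the Gauss sums and Weil indices arising from the Heisenberg--Weil part $\kappa$ of Yu's construction, and it is precisely here that the untwisted construction fails to produce a clean character formula.
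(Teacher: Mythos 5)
The paper does not actually prove this statement; it is a survey, and the theorem is quoted verbatim from \cite[Proposition~4.3.5]{FKS} with no accompanying argument. There is therefore no ``paper's own proof'' to compare against, and your proposal can only be assessed as a plan for reconstructing the proof from the cited literature.

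As a roadmap it is broadly sound: the reduction via Deligne's support theorem, the passage to a topological Jordan decomposition $\gamma=\gamma_0\gamma_{0+}$, the descent to $J=\Cent_G(\gamma_0)^\circ$, the appearance of $\hat\cO^J_{X^g}$ via a Murnaghan--Kirillov--type formula for the character of a regular supercuspidal of $J(F)$ on topologically unipotent elements, and the identification of the quadratic twist $\eps$ as the mechanism that straightens out the Gauss sums and Weil indices from the Heisenberg--Weil data are all genuine features of the argument in Adler--Spice, Spice, and \cite{FKS}. But a few points are imprecise enough to matter. First, you say ``after replacing $\gamma$ by a conjugate I may assume $\gamma_0\in S(F)$''; this is not a valid normalization --- the sum in the formula runs over double cosets $g$ with ${}^g\gamma_0\in S(F)$, and if no such $g$ exists the character vanishes, so the dependence on the conjugacy class of $\gamma_0$ relative to $S$ is precisely what the sum records and cannot be normalized away. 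Second, the phrase ``local character expansion'' conventionally refers to the Harish-Chandra--Howe expansion in terms of Fourier transforms of \emph{nilpotent} orbital integrals; the input here is instead a formula for the character as (essentially) a single Fourier transform of a regular semisimple orbital integral through $X$, valid on a depth-controlled neighbourhood --- this is a different kind of statement with a different proof. Third, and most substantively, you flag ``the precise determination of all the normalizing constants'' as the expected obstacle, but that determination \emph{is} the content of the theorem: the qualitative shape of the formula was understood well before \cite{FKS}, and what the cited proposition actually establishes is that $e(G)e(J)$, $\eps_L$, $\Delta^{\mathrm{abs}}_{II}[a,\chi'']$, and the normalization of $\hat\cO^J_{X^g}$ assemble without correction terms once the $\eps$-twist is built in. A proof that defers exactly this step is a proof of nothing new. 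So while the outline points in the right direction and assembles the correct bibliography, it does not constitute a proof of the stated result and would need the full bookkeeping of \cite[\S4.3]{FKS} (tracing through Spice's asymptotic expansion and the comparison of $a$-/$\chi$-data normalizations) to be complete.
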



\printindex{notation}{Selected notation}

\printindex{definition}{Selected definitions}

\bibliography{Fintzenbib}

\end{document}